\newtheorem{thm}{Theorem}[section]
\newtheorem{lem}[thm]{Lemma}
\newtheorem{prop}[thm]{Propsition}
\newtheorem{cor}[thm]{Corollary}
\newtheorem{defn}[thm]{Definition}
\newtheorem{rem}[thm]{Remark}
\numberwithin{equation}{section}
\begin{document}

	\title[cubic Dirac equations]{Scattering of cubic Dirac equations with a general class of Hartree-type nonlinearity for the critical Sobolev data}

     \author[S. Hong]{Seokchang Hong}
    \address{Department of Mathematics, Chung-Ang University, Seoul 06974, Korea}
    \email{seokchangh11@cau.ac.kr}

	\thanks{2010 {\it Mathematics Subject Classification.} M35Q55, 35Q40.}
	\thanks{{\it Key words and phrases.} Dirac equation, Hartree-type nonlinearity, global well-posedness, scattering, the scale-invariant Sobolev space, angular regularity}
	
	\begin{abstract}
		Recently low-regularity behaviour of solutions to cubic Dirac equations with the Hartree-type nonlinearity has been extensively studied in somewhat a specific assumption on the structure of the nonlinearity. The key approach of previous results was to exploit the null structure in the nonlinearity and the decay of the Yukawa potential. In this paper, we aim to go beyond; we investigate the strong scattering property of cubic Dirac equations with quite a general class of the Hartree-type nonlinearity, which covers the Coulomb potential as well as the Yukawa potential, and the bilinear form, in which one cannot use the specific null structure. As a direct application, we also obtain the scattering for the boson-star equations with the scaling-critical Sobolev data.  
	\end{abstract}

		\maketitle

\section{Introduction}
In 1928, the Dirac equation was derived by P. A. M. Dirac \cite{D} to explain the behaviour of relativistic particles of spin $\frac12$, i.e., fermions.  His trial was successful; his equation is consistent with quantum physics and special relativity. Since then, not only has the Dirac equation shattered the light on the conjunction between quantum mechanics and the theory of special relativity, but it also reveals a new class of mathematical objects, which is of great interest in dispersive equations \cite{caccifesta,cacciafesta1,candy,escobedo,machiharaet,selbtes}. In the mathematical aspect one remarkable difference between the Dirac equation and other equations in quantum physics such as the Schr\"odinger or the Klein-Gordon equations is that the Dirac equation is not written as a single equation. Indeed, it is a linear system of four\footnote{For simplicity of our discussion we restrict ourselves in the $(1+3)$ dimensional setting.} coupled first-order partial differential equations. To be precise, the Dirac equation is often written in the single symbolic form as (in free particle case with non-negative constant $M\ge0$)
\begin{align}\label{homo-dirac}
-i\gamma^\mu\partial_\mu\psi+M\psi =0,	
\end{align}
where the unknown function $\psi$ is the complex-valued four-column field, which is referred to as the Dirac spinor field in the Minkowski space $(\mathbb R^{1+3},\mathbf m)$, where the metric $\mathbf m$ is given by $\mathbf m_{\mu\nu}=\textrm{diag}(-1,1,1,1)$. Here the $\gamma^\mu$, $\mu=0,1,2,3$ are the $4\times4$ complex-valued matrices 
\begin{align*}
\gamma^0 = \begin{bmatrix} 
 	I_{2\times2} & \mathbf0 \\ \mathbf0 & -I_{2\times2}
 \end{bmatrix}, \ \gamma^j = \begin{bmatrix} 
 	\mathbf 0 & \sigma^j \\ -\sigma^j & \mathbf0 
 \end{bmatrix}	,
\end{align*}
where the Pauli matrices $\sigma^j$, $j=1,2,3$ are the $2\times2$ complex-valued matrices, given by
\begin{align*}
\sigma^1 = \begin{bmatrix} 
 	0 & 1 \\ 1 & 0 
 \end{bmatrix}, \ \sigma^2 = \begin{bmatrix} 
 	0 & -i \\ i & 0 
 \end{bmatrix}, \  \sigma^3=\begin{bmatrix} 
 	1 & 0 \\ 0 & -1
 \end{bmatrix},	
\end{align*}
and $I_{n\times n}$ is the $n\times n$ identity matrix.
We refer to Section \ref{sec:dirac-op} for some algebraic structures of gamma $\gamma^\mu$ and Pauli matrices $\sigma^j$.
We let $(x^\mu)$, $\mu=0,\cdots,3$ denote points in the Minkowski space. The partial derivatives with respect to $x^\mu$ is written by $\partial_\mu$. We shall use the notation $t=x^0$ for time variable, and $x=(x^1,x^2,x^3)$ for spatial variable. Then we write $\partial_0=\partial_t$ and $\nabla=(\partial_1,\partial_2,\partial_3)$. 
Throughout this paper we adapt the summation convention, i.e., any repeated indices mean the summation over described range. For example we write $\gamma^\mu\partial_\mu = \gamma^0\partial_t+\sum_{j=1}^3\gamma^j\partial_j$. 

Based on the homogeneous Dirac equations \eqref{homo-dirac}, the Dirac equation with specific nonlinearities has been extensively studied to model the self-interacting Dirac fermions, such as electrons. The cubic Dirac equations is one of a widely considered toy model
\begin{align}\label{cubic-dirac1}
-i\gamma^\mu\partial_\mu\psi+M\psi = (\psi^\dagger\gamma^0\Gamma\psi)\Gamma\psi. 	
\end{align}
 The equation \eqref{cubic-dirac1} is called the Soler model \cite{soler} when $\Gamma=I_{4\times4}$ and the Thirring model \cite{thirring} if $\Gamma=\gamma^\mu$. Here the dagger notation $\psi^\dagger$  stands for the complex conjugate transpose of $\psi$, i.e., $\psi^\dagger=(\psi^*)^T$. The non-negative constant $M\ge0$ is a mass of the described fermion. Low regularity behavior of solutions to cubic Dirac equations in two or three dimensional setting is well-known. We refer the readers to \cite{behe1,behe2,boucan,machiharaet,pecher}. See also \cite{caccifesta} for more general cubic nonlinearity.
 
One may also consider a physical system in which the Dirac fermion is coupled with a scalar field. In this paper we only present the Dirac-Klein-Gordon system which is obtained by coupling the Dirac field and the Klein-Gordon field
\begin{align}\label{intro-gen-dkg}
\begin{aligned}
-i\gamma^\mu\partial_\mu\psi+M\psi &= g\varphi\Gamma\psi , \\
(\Box+m^2)\varphi &= -g\psi^\dagger\gamma^0\Gamma\psi,	
\end{aligned}
\end{align}
which describes an interaction of the Dirac fermion with the meson field \cite{bjor}. The complex matrix $\Gamma\in\mathbb C^{4\times4}$ is chosen to be $I_{4\times4}, \gamma^0$, or $\gamma^5$ by the preference of researchers and $g$ is the coupling constant. From the system \eqref{intro-gen-dkg}, one can derive cubic Dirac equations with the Hartree-type nonlinearity \cite{chagla,tes1,cyang}, given by
\begin{align}\label{main-hartree}
\left\{
\begin{array}{l}
	-i\gamma^\mu\partial_\mu\psi +M\psi = gV_b*(\psi^\dagger\gamma^0\Gamma\psi)\Gamma\psi, \\
	\psi|_{t=0} = \psi_0.
\end{array}
\right.	
\end{align}
The potential $V_b$ is the spatial potential
$$
V_b(x) = \frac1{4\pi}\frac{e^{-b|x|}}{|x|}, \quad b\ge0,
$$
and in the sequel we put the coupling constant $g=1$ for simplicity. 
In particular, the potential $V_b$ is the Coulomb-type when $b=0$, whereas it is the Yukawa-type for $b>0$. Note that the Fourier transform of the potential $V_b$ is $\sqrt{b^2+|\xi|^2}^{-2} = \langle\xi\rangle_b^{-2}$. The equation \eqref{main-hartree} with $M=0$ and $b=0$ obeys the scale-invariant symmetry, i.e., for a solution $\psi$ to the equations \eqref{main-hartree}, $\psi_\tau(t,x) = \tau^\frac32\psi(\tau t, \tau x),\, \tau>0$ is also the solutions to \eqref{main-hartree} and the scale-invariant Sobolev space is $L^2_x$. The long-time behavour of solutions to the equation \eqref{main-hartree} for a low regularity data is also well-studied especially when $\Gamma=I_{4\times4}$, $M>0$, and $b>0$ \cite{cholee,choozlee,tes,tes1,cyang}. We only mention one result among the previous works. We define the angularly regular homogeneous Sobolev spaces to be the set $\dot H^{s,\sigma}$ whose norm is given by $\|\langle\Omega\rangle^\sigma f\|_{\dot H^s}$, where $\langle\Omega\rangle=(1-\Delta_{\mathbb S^2})^\frac12$ and $\Delta_{\mathbb S^2}$ is the Laplace-Beltrami operator on the unit sphere $\mathbb S^2\subset\mathbb R^3$. We also define the inhomogeneous space $H^{s,\sigma}$ in the obvious way. When $s=0$, we denote it by $L^{2,\sigma}$.
\begin{thm}[Theorem 1.1 of \cite{chohlee}]\label{gwp-hartree3}
Let $M>0$ and $b>0$. We consider the equations \eqref{main-hartree} with $\Gamma=I_{4\times4}$. We let $\sigma>0$. Suppose that the initial data $\psi_0\in L^{2,\sigma}(\mathbb R^3)$ satisfies
$$
\|\psi_0\|_{L^{2,\sigma}} \ll1.
$$	
Then the Cauchy problems for the equation \eqref{main-hartree} is globally well-posed and the solutions scatter to free waves. 
\end{thm}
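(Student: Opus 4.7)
The plan is to diagonalize the Dirac operator via the spectral projectors
\[
\Pi_\pm(D) = \tfrac{1}{2}\left(I \pm \langle D\rangle_M^{-1}(-i\gamma^0\gamma^j\partial_j + M\gamma^0)\right),
\]
so that the decomposition $\psi = \psi_+ + \psi_-$, $\psi_\pm := \Pi_\pm(D)\psi$, converts the system into two Klein-Gordon-type half-wave equations
\[
(i\partial_t \mp \langle D\rangle_M)\psi_\pm = -\Pi_\pm(D)\gamma^0\,\mathcal{N}(\psi),\qquad \mathcal{N}(\psi):=V_b*(\psi^\dagger\gamma^0\psi)\psi.
\]
Scattering for $\psi$ then reduces to scattering for the half-waves. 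I would work in an adapted resolution space $X^{0,\sigma}$ at the scaling-critical regularity $s=0$, built from $U^2$- (or $V^2$-) atoms in the sense of Hadac-Herr-Koch along the flows $e^{\pm it\langle D\rangle_M}$ and carrying the angular weight $\langle\Omega\rangle^\sigma$, together with a companion inhomogeneous space $Y^{0,\sigma}$ via the duality transfer principle, so that the linear estimate schematically reads
\[
\|\psi_\pm\|_{X^{0,\sigma}} \lesssim \|\Pi_\pm\psi_0\|_{L^{2,\sigma}} + \|\Pi_\pm\gamma^0\mathcal{N}(\psi)\|_{Y^{0,\sigma}}.
\]

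The heart of the argument is the trilinear Hartree estimate
\[
\left\|V_b*(\psi_1^\dagger\gamma^0\psi_2)\,\psi_3\right\|_{Y^{0,\sigma}} \lesssim \prod_{j=1}^{3}\|\psi_j\|_{X^{0,\sigma}}.
\]
I would dyadically decompose each $\psi_j$ in Littlewood-Paley frequency and, following the Sterbenz/Cho-Ozawa program, in angular sectors, and separately handle the high-high-to-low, high-low-high, low-high-high, and comparable-frequency configurations. Inside the quadratic form $\psi_1^\dagger\gamma^0\psi_2$ the Yukawa multiplier $\langle\xi_{12}\rangle_b^{-2}$ provides decisive high-frequency smoothing at exactly the intermediate frequency where the critical scaling is most under stress; in the remaining regimes I would pair bilinear Strichartz estimates for Klein-Gordon half-waves with angular-improved $L^2_tL^\infty_x$-type estimates, which exchange a fraction of the angular weight $\langle\Omega\rangle^\sigma$ for the extra integrability needed to sum the dyadic pieces. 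The positive mass $M>0$ removes the low-frequency degeneracy that would otherwise be fatal.

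Once this trilinear estimate is in hand, global well-posedness for small $\psi_0\in L^{2,\sigma}$ follows by a standard contraction argument on a small ball of $X^{0,\sigma}$, and scattering is automatic: the Duhamel integrals $\int_0^{t} e^{\pm i(t-s)\langle D\rangle_M}\Pi_\pm\gamma^0\mathcal{N}(\psi)(s)\,ds$ form a Cauchy net in $L^{2,\sigma}$ as $t\to\infty$, yielding asymptotic free states $\psi_\pm^{\infty}$. The principal obstacle is closing the trilinear estimate exactly at $s=0$: no frequency weight can be borrowed to tame the Littlewood-Paley summation, so the logarithmic divergence in the high-high resonance must be absorbed solely by the angular regularity, via an angular-Whitney type decomposition whose combinatorial gain is driven by $\sigma>0$. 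This is the structural reason the present method cannot dispense with the angular assumption, and it also explains why the smallness $\|\psi_0\|_{L^{2,\sigma}}\ll 1$ is dictated precisely by the norm of the trilinear operator.
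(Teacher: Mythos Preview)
The theorem you are attempting to prove is not proven in the present paper at all; it is quoted verbatim from \cite{chohlee} as prior background (see the sentence ``We only mention one result among the previous works'' immediately preceding it). There is therefore no proof in this paper to compare against. The paper's own contributions are Theorems \ref{gwp-hartree1}, \ref{gwp-hartree2}, \ref{gwp-hartree4}, which concern the massless case $M=0$ or the non-null-form case $\Gamma=\gamma^0$.

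That said, your sketch is broadly in the right spirit and parallels the machinery this paper uses for its own theorems: projector reduction to half-waves, $U^2/V^2$ spaces carrying the angular weight, reduction to a trilinear estimate via an energy/duality inequality, and dyadic plus angular decompositions closed by a contraction argument. Two points deserve comment. First, you never invoke the null structure in $\psi^\dagger\gamma^0\psi$ arising from $\gamma^0\Pi_\theta^M=\Pi_{-\theta}^M\gamma^0+\theta M\langle D\rangle_M^{-1}$; in both \cite{chohlee} and the present paper this cancellation handles the parallel-sign resonance $\theta_1=\theta_2$, and the angular regularity is spent only on the transverse resonance $\theta_1\neq\theta_2$ where the null form is unavailable (see Section \ref{intro-null} and the discussion around \eqref{hhl-trans}--\eqref{hhl-ang-int}). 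Treating the angular weight as the universal cure for high--high interactions misidentifies this division of labor and would likely leave the $\theta_1=\theta_2$ case unclosed at $s=0$ with only $\sigma>0$. Second, the concrete estimates here run through $L^4_{t,x}$ Strichartz refined by cube localization (Lemma \ref{stri-cube}), the angular-improved $L^{q_\eta}_tL^4_x$ estimate (Proposition \ref{stri-ang}), and angular concentration (Lemma \ref{ang-con}), rather than the $L^2_tL^\infty_x$-type bounds you mention.
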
 
The author of \cite{cyang} obtained the failure of $C^3$-smoothness of the solution maps in the supercritical range $s<0$. In this aspect, Theorem \ref{gwp-hartree3} is optimal up to a small amount of extra weighted regularity in angular variables.
The natural question is then whether solutions to the equations \eqref{main-hartree} obey scattering property when $b=0$ or one chooses other than $\Gamma=I_{4\times4}$. For $\Gamma=\gamma^0$ and $b=0$, and $M>0$, the answer to the question is in general negative \cite{chooz,chohlee1}. Instead, the authors of \cite{cho} obtained modified scattering for the case $\Gamma=\gamma^0$ and $b=0$, and $M>0$. We also refer to \cite{geosha} for global solutions to the equations with $\Gamma=\gamma^0$ in two dimensional setup. 

Motivated by partially positive or negative answers, we aim to investigate strong scattering property of solutions to cubic Dirac equations with a wide class of the Hartree-type nonlinearity. We hope to establish global solutions to the equations \eqref{main-hartree} with more general cases, in which one cannot exploit the null structure in the nonlinearity, or one may encounter a serious singularity near origin. To elucidate this point, it is instructive to discuss the main difference between the Yukawa and Coulomb potentials, and between the choices of $\Gamma$, especially $\Gamma=I_{4\times4}$ and $\Gamma=\gamma^0$.
\subsection{Coulomb potential and Yukawa potential}
We recall the potential $V_b(x)=\frac1{4\pi}\frac{e^{-b|x|}}{|x|}$ for $x\in\mathbb R^3$. For $b\ge0$, the Fourier transform of $V_b$ is given by $(b^2+|\xi|^2)^{-1}$.
Compared to the Yukawa potential, the Fourier transform of the Coulomb potential possesses the singularity at the origin. This requires one to deal with carefully the low-output frequency. More precisely, one need to work on the bilinear estimates
$$
\|P_\mu H_N(P_{\lambda_1}H_{N_1}\varphi^\dagger\gamma^0P_{\lambda_2}H_{N_2}\phi)\|_{L^2_tL^2_x},
$$
with low frequency: $0<\mu\le1$. Here $P_\lambda$ is the usual Littlewood-Paley projections onto the set $\{\xi\in\mathbb R^3: |\xi|\approx\lambda \}$ and $H_N$ denotes the projections on angular frequencies of size $N$. Note that for $\mu\gtrsim1$, the above bilinear estimates have been already known in \cite{chohlee,chohlee1}, concerning the scattering property with the Yukawa potential, and there is no essential difference between the Coulomb potential and Yukawa potential in the Fourier side when the output frequency is not very small, i.e., $\mu\gtrsim1$. One should also remark that the output frequency $\mu$ can be such a \textit{low} frequency as $\mu\lesssim1$  only if two input frequencies are high and cause resonant interactions. Meanwhile, it is easy to see that the high-output frequency case, i.e., $\mu\approx\lambda_1$ or $\mu\approx\lambda_2$, is rather easier than low-output case, since the Fourier multiplier $\widehat{V_b}$ is a kernel which exhibits good decay. In consequence, the crucial part of the above bilinear estimates is the low-output case, especially when $\mu\lesssim1$ and $\mu\ll\lambda_1\approx\lambda_2$.   
\subsection{Null structure}\label{intro-null}
Now we give a remark on the nonlinearity in \eqref{main-hartree} with $\Gamma=I_{4\times4}$ and $\Gamma=\gamma^0$. The bilinear forms $\psi^\dagger\psi$ and $\psi^\dagger\gamma^0\psi$ exhibit totally different structures. We shall investigate the main difference between two products of spinor fields by exploiting the projection operators $\Pi_\theta^M$ for $\theta\in\{+,-\}$, defined as the Fourier multiplier $\widehat{\Pi_\theta^M f}(\xi)=\Pi_\theta^M(\xi)\widehat f(\xi)$ with
\begin{align}
\Pi_\theta^M(\xi) = \frac12\left(I_{4\times4}+\frac{\theta}{\langle\xi\rangle_M}(\xi_j\gamma^0\gamma^j+M\gamma^0) \right),	
\end{align}
where $\langle\xi\rangle_M = \sqrt{|\xi|^2+M^2} $, for $M\ge0$.
An easy computation shows that $\Pi_\theta^M\Pi_{-\theta}^M=0$ and $\Pi_\theta^M\Pi_\theta^M=\Pi_\theta^M$. One can also obtain the commutator identity: (see \cite{behe})
\begin{align}\label{comm-massive}
\gamma^0\Pi_\theta^M = \Pi_{-\theta}^M\gamma^0+\theta \frac{M}{\langle\xi\rangle_M}.
\end{align}
Using these one may observe that the product $\psi^\dagger\gamma^0\psi$ is written in the Fourier side as 
\begin{align*}
\mathcal F_x(\psi^\dagger\gamma^0\psi)& = \sum_{\theta_1,\theta_2\in\{+,-\}}\mathcal F_x[(\Pi_{\theta_1}^M\psi)	^\dagger\gamma^0(\Pi_{\theta_2}^M\psi)](\xi) \\
& = \sum_{\theta_1,\theta_2\in\{+,-\}} \int_{\mathbb R^3}\widehat{\psi^\dagger}(\eta)\Pi_{\theta_1}^M(\eta)\Pi_{-\theta_2}^M(\xi-\eta)\widehat{\psi}(\xi-\eta)\,d\eta. \\
&\qquad + \theta_2M\mathcal F_x[(\Pi_{\theta_1}^M\psi)^\dagger\langle\nabla\rangle_M^{-1}\psi](\xi).
\end{align*}
We notice that the composition of projections $\Pi_{\theta_1}\Pi_{\theta_2}$ yields an additional cancellation property as 
\begin{align}\label{int-dirac-null}
|\Pi_{\theta}^M(\xi)\Pi_{-\theta}^M(\eta)| \lesssim \angle(\xi,\eta)+M(\langle\xi\rangle_M^{-1}+\langle\eta\rangle_M^{-1}),
\end{align}
for $\xi,\eta\in\mathbb R^3$. (See \cite[Lemma 3.1]{behe}.)
In consequence, we observe that the bilinear form in the nonlinearity of the equation \eqref{main-hartree} with $\Gamma=I_{4\times4}$ possesses the cancellation property. The very existence of the null structure is of great importance in the analysis in view of our previous observation in \cite{chohlee}:
\begin{thm}[Theorem 1.2 of \cite{chohlee}]\label{nonscatter-thm}
Let $b=0$ and $M>0$. 
Assume that $\psi$ be a smooth solution to \eqref{main-hartree} with $\Gamma=\gamma^{0}$ which scatters in $L_x^2$ to a smooth solution $\psi_\infty^\ell$. 
Then $\psi,\psi_\infty^\ell=0$ in $L_x^2$.
\end{thm}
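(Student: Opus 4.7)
The plan is to exploit two facts: $L^2_x$ charge conservation for the $\Gamma=\gamma^0$ equation and the long-range character of the Coulomb potential $V_0$, forcing a logarithmic divergence of an integral that the scattering assumption would keep finite.

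First I observe that with $\Gamma=\gamma^0$ the identity $(\gamma^0)^2=I_{4\times 4}$ reduces the bilinear density $\psi^\dagger\gamma^0\Gamma\psi$ to $|\psi|^2$, so \eqref{main-hartree} becomes
\[
-i\gamma^\mu\partial_\mu\psi+M\psi=(V_0*|\psi|^2)\gamma^0\psi.
\]
Left-multiplying by $\psi^\dagger\gamma^0$ yields the reality of the nonlinear contribution $(V_0*|\psi|^2)|\psi|^2$, and taking imaginary parts produces the continuity equation $\partial_t|\psi|^2+\partial_j(\psi^\dagger\gamma^0\gamma^j\psi)=0$. Hence $\|\psi(t)\|_{L^2_x}^2=\|\psi_0\|_{L^2_x}^2=:Q$ for all $t$, and since the free Dirac flow is $L^2_x$-unitary, the scattering hypothesis implies $\|\psi_\infty^\ell\|_{L^2_x}^2=Q$ as well.

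Second, I extract a quantitative obstruction from Duhamel. Writing the equation in Hamiltonian form as $i\partial_t\psi=H\psi-(V_0*|\psi|^2)\psi$ with $H:=-i\gamma^0\gamma^j\partial_j+M\gamma^0$,
\[
e^{itH}\psi(t)=\psi_0+i\int_0^t e^{isH}\big((V_0*|\psi|^2)(s)\,\psi(s)\big)\,ds,
\]
and the scattering hypothesis forces the right-hand side to converge to $\psi_\infty^\ell(0)$ in $L^2_x$. Pairing with $\psi_\infty^\ell(0)$, writing $e^{-isH}\psi_\infty^\ell(0)=\psi(s)-r(s)$ with $\|r(s)\|_{L^2_x}\to 0$, and taking real parts, one obtains
\[
\int_0^T\!\!\int_{\mathbb R^3}(V_0*|\psi|^2)(s,x)\,|\psi(s,x)|^2\,dx\,ds=\mathrm{Im}\big\langle\psi_\infty^\ell(0)-\psi_0,\psi_\infty^\ell(0)\big\rangle+\mathcal E(T)+o(1)
\]
as $T\to\infty$, where the error $\mathcal E(T)=\mathrm{Re}\int_0^T\langle(V_0*|\psi|^2)\psi(s),r(s)\rangle_{L^2_x}\,ds$ must be shown to grow sublinearly in $T$.

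Third, I deduce that the left-hand side must diverge whenever $Q>0$. Finite speed of propagation for the Dirac equation (which persists under the multiplicative potential $V_0*|\psi|^2$ despite its nonlocal nature) implies that if $\psi_0$ is supported in $B(0,R_0)$ then $\psi(t,\cdot)$ is supported in $B(0,R_0+t)$; a general $L^2_x$ datum is handled by truncation, the tail being controlled by conservation of $Q$. Since $|x-y|\le 2(R_0+t)$ on the joint support of $|\psi(t,x)|^2|\psi(t,y)|^2$, the elementary pigeonhole bound
\[
\int(V_0*|\psi|^2)|\psi|^2\,dx=\int\!\!\int\frac{|\psi(x)|^2|\psi(y)|^2}{4\pi|x-y|}\,dx\,dy\ge\frac{Q^2}{8\pi(R_0+t)}
\]
holds. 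Integrating over $s\in[0,T]$ yields a lower bound $\gtrsim Q^2\log(1+T)\to\infty$, contradicting the finite bound from step two unless $Q=0$; in that case $\psi\equiv\psi_\infty^\ell\equiv 0$.

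The main technical obstacle is justifying that the remainder $\mathcal E(T)$ grows sublinearly in $T$ and making quantitative the $\varepsilon$-tail version of finite speed for $L^2_x$-data in step three. Both rest on uniform in-time control of $(V_0*|\psi|^2)\psi$ in a norm dual to $L^2_x$, which should come from combining charge conservation, a Hardy--Littlewood--Sobolev bound for $V_0*|\psi|^2$, and Strichartz-type estimates for the free massive Dirac flow $e^{-itH}$.
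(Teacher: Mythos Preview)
This theorem is not proved in the present paper; it is quoted verbatim from \cite{chohlee} and used only to motivate the new results. There is therefore no proof here to compare against, and I can only evaluate your proposal on its own terms.

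Your overall strategy---charge conservation, Duhamel paired against the scattering state, and a $1/t$ lower bound on the interaction energy from finite propagation speed---is the classical long-range obstruction argument (Glassey, Strauss, Cho--Ozawa). Steps one through three are set up correctly. The genuine gap is in the error control you flag at the end, and it is more serious than you indicate. You state that it suffices for $\mathcal E(T)$ to grow \emph{sublinearly in $T$}, but the divergent term you produce in step three grows only like $\log T$, which is already sublinear; you actually need $\mathcal E(T)=o(\log T)$. Your proposed mechanism---``uniform in-time control of $(V_0*|\psi|^2)\psi$ in a norm dual to $L^2_x$''---would yield at best
\[
|\mathcal E(T)|\le C\int_0^T\|r(s)\|_{L^2_x}\,ds,
\]
and from $\|r(s)\|_{L^2_x}\to 0$ one obtains only $o(T)$, nowhere near $o(\log T)$. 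Hardy--Littlewood--Sobolev plus charge conservation cannot produce the missing decay: you would need something like $\|(V_0*|\psi|^2)\psi(s)\|_{L^2_x}\lesssim s^{-1}$, and that requires pointwise dispersive decay (e.g.\ $\psi\in L^3_x$ with rate $s^{-1/2}$), not merely $L^2_x$ boundedness.

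The standard resolution, and presumably the one in \cite{chohlee}, is to exploit the \emph{smoothness} hypothesis on $\psi_\infty^\ell$ and the massive free Dirac dispersive decay $\|e^{-itH}f\|_{L^\infty_x}\lesssim\langle t\rangle^{-3/2}$ to get both a matching upper bound $\int(V_0*|\psi_\infty^\ell|^2)|\psi_\infty^\ell|^2\,dx\lesssim t^{-1}$ and a lower bound of the same order for the free solution, then transfer these to the nonlinear solution via the scattering hypothesis. This replaces your finite-speed argument for the nonlinear flow (which in any case requires compactly supported data, not guaranteed by ``smooth'') and closes the error estimate. Your outline is on the right track but, as written, the contradiction does not close.
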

\subsection{Additional angular regularity}
We would like to highlight the reason why we need an additional angular regularity even when one deals with the equations \eqref{main-hartree} with $\Gamma=I_{4\times4}$, in which case one can exploit the null-form-bound. The above discussion on the null structure says that one can obtain an additional cancellation property from the bilinear form $(\Pi_{\theta_1}^M\varphi)^\dagger\gamma^0(\Pi_{\theta_2}^M\psi)$ when $\theta_1=\theta_2$. However, the situation becomes totally different if $\theta_1\neq\theta_2$. 
\begin{prop}[Proposition 3.7 of \cite{cyang}]
Let $\mu\ll\lambda_1\approx\lambda_2$. We use the shorthand $P_\lambda\psi=\psi_\lambda$. If $\theta_1\neq\theta_2$, then
\begin{align}\label{hhl-trans}
\|P_\mu [(\Pi_{\theta_1}^M\varphi_{\lambda_1})^\dagger\gamma^0(\Pi_{\theta_2}^M\psi_{\lambda_2})]\|_{L^2_tL^2_x(\mathbb R^{1+3})} & \lesssim \mu \|\varphi_{\lambda_1}\|_{V^2_{\theta_1}}\|\psi_{\lambda_2}\|_{V^2_{\theta_2}}	.
\end{align}
\end{prop}
We refer the readers to Section 2 for the definition of the $V^2_\theta$-spaces. The situation goes even worse when we are concerned with the specific cubic nonlinearity. Indeed, in view of an energy inequality (see Lemma \ref{energy-ineq}) we need to consider the following integral:
\begin{align}
\int_{\mathbb R^{1+3}}V_b*(\psi_{\lambda_1}^\dagger\gamma^0\psi_{\lambda_2})	(\psi_{\lambda_4}^\dagger\gamma^0\psi_{\lambda_3})\,dxdt,
\end{align}
where we omit the projection operators $\Pi_{\theta_j}$ for brevity. We let $\xi_j$ denote the spatial Fourier variables of the spinors $\psi_j$, $j=1,\cdots,4$, respectively. We also denote the temporal Fourier variables of the spinors $\psi_j$ by $\tau_j$. Then we have the relations
$$
-\xi_1+\xi_2-\xi_4+\xi_3 =0, \textrm{ and } -\tau_1+\tau_2-\tau_4+\tau_3=0.
$$ 
We define the modulation $|\tau+\theta\langle\xi\rangle_M|$, the distance to the characteristic hypersurface. Now we define the modulation function
\begin{align}\label{intro-modulation}
\mathcal M_{\theta_{1234}}(\xi_1,\xi_2,\xi_3,\xi_4)=| -\theta_1\langle\xi_1\rangle_M+\theta_2\langle\xi_2\rangle_M+\theta_3\langle\xi_3\rangle_M-\theta_4\langle\xi_4\rangle_M |	.
\end{align}
The relative size of the modulation function is of great importance in the practical analysis. Indeed, the largeness of $\mathcal M_{\theta_{1234}}$ implies an osciallation, and then one can expect a certain cancellation. However, when the function $\mathcal M_{\theta_{1234}}$ is small, which we call the resonant interactions, we cannot expect such an oscillation, and this case will be our main concern. In view of \eqref{intro-modulation}, the resonant interactions can occur when $\theta_1=\theta_2$ and $\theta_3=\theta_4$ with $\lambda_1\approx\lambda_2$ and $\lambda_3\approx\lambda_4$, in which case one can enjoy the null structure. However, the resonant interactions can also occur in the case that one cannot exploit the null structure. In fact, if $(\theta_1,\theta_2,\theta_3,\theta_4)=(+,-,-,+)$ or $(-,+,+,-)$ with $\lambda_1\approx\lambda_2\approx\lambda_3\approx\lambda_4$, then the function $\mathcal M_{\theta_{1234}}$ can be quite small even when the modulation of the spinors $\psi_j$ is high. One can at most obtain the bound as \eqref{hhl-trans}. It turns out that this case is the most critical case in the analysis of the equations \eqref{main-hartree} with $\Gamma=I_{4\times4}$, which impedes one from attaining the scattering results for the scaling critical Sobolev data.

Now the role of an additional angular regularity seems obvious. By imposing angular regularity we will obtain the improved bound as (see Lemma \ref{main-bi})
\begin{align}\label{hhl-ang-int}
\|P_\mu H_N(\varphi_{\lambda_1,N_1}^\dagger\gamma^0\psi_{\lambda_2,N_2})\|_{L^2_tL^2_x} & \lesssim \mu\left(\frac\mu{\min\{\lambda_1,\lambda_2\}}\right)^{\mathfrak d}	\min\{N_1,N_2\}\|\varphi_{\lambda_1,N_1}\|_{V^2_{\theta_1}}\|\psi_{\lambda_2,N_2}\|_{V^2_{\theta_2}},
\end{align}
where $\mathfrak d$ is some positive number slightly smaller than $\frac14$. Furthermore, we will see that the relative size of modulation is not important in the proof of the estimates \eqref{hhl-ang-int}. A simple combination of the bounds \eqref{hhl-trans} and \eqref{hhl-ang-int} gives for a small $\delta\ll1$,
\begin{align}
	\|P_\mu H_N(\varphi_{\lambda_1,N_1}^\dagger\gamma^0\psi_{\lambda_2,N_2})\|_{L^2_tL^2_x} & \lesssim \mu\left(\frac\mu{\min\{\lambda_1,\lambda_2\}}\right)^{\delta}	(\min\{N_1,N_2\})^\delta\|\varphi_{\lambda_1,N_1}\|_{V^2_{\theta_1}}\|\psi_{\lambda_2,N_2}\|_{V^2_{\theta_2}},
\end{align}
which turns out to be enough bound to obtain the desired global solutions. 

\subsection{Main results}
In this paper, we establish the scattering property for the equation \eqref{main-hartree} at the scaling critical regularity in the mass-less case with the Coulomb-type potential, as well as the Yukawa-type potential. In other words, we are concerned with the cubic Dirac equation \eqref{main-hartree} with $M=0$ and $b\ge0$ for the scale-invariant Sobolev data. 
\begin{thm}\label{gwp-hartree1}
Let $M=0$ and $b\ge0$. We consider the equations \eqref{main-hartree} with $\Gamma=I_{4\times4}$. We let $\sigma>0$. Suppose that the initial data $\psi_0\in L^{2,\sigma}(\mathbb R^3)$ satisfies
$$
\|\psi_0\|_{L^{2,\sigma}} \ll1.
$$	
Then the Cauchy problems for the equation \eqref{main-hartree} is globally well-posed and the solutions scatter to free waves. 
\end{thm}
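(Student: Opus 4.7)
The plan is to run a contraction mapping argument in a resolution space built from half-wave adapted $V^2_\theta$ norms carrying an additional angular weight. Since $M=0$, the commutator identity \eqref{comm-massive} becomes $\gamma^0 \Pi_\theta = \Pi_{-\theta}\gamma^0$ exactly, so decomposing $\psi = \Pi_+\psi + \Pi_-\psi$ reduces \eqref{main-hartree} to a coupled system of first-order half-wave equations $(\partial_t + i\theta|\nabla|)\Pi_\theta\psi = \Pi_\theta(V_b*(\psi^\dagger\gamma^0\psi)\psi)$. I would work in the resolution space
\begin{equation*}
F^\sigma := \Bigl\{\psi : \|\psi\|_{F^\sigma}^2 := \sum_{\theta=\pm}\sum_{\lambda,N} N^{2\sigma}\|P_\lambda H_N \Pi_\theta \psi\|_{V^2_\theta}^2 < \infty\Bigr\},
\end{equation*}
and use the $V^2$-duality energy inequality (Lemma \ref{energy-ineq}) to reduce global well-posedness and scattering to the a-priori quadrilinear bound
\begin{equation*}
|\mathcal{Q}(\psi_1,\psi_2,\psi_3,\psi_4)| := \Bigl|\int_{\mathbb{R}^{1+3}} V_b * (\psi_1^\dagger \gamma^0 \psi_2)\,(\psi_4^\dagger \gamma^0 \psi_3)\,dx\,dt\Bigr| \lesssim \prod_{j=1}^4\|\psi_j\|_{F^\sigma}.
\end{equation*}

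To control $\mathcal{Q}$, I would dyadically decompose each spinor $\psi_j = \sum P_{\lambda_j} H_{N_j}\Pi_{\theta_j}\psi_j$ and insert a Littlewood-Paley plus spherical-harmonic partition $1 = \sum_{\mu,N} P_\mu H_N$ on the output of the convolution. By Plancherel and Cauchy-Schwarz in $(t,x)$, the integral is dominated by
\begin{equation*}
\sum_{\mu,N} \widehat{V_b}(\mu)\,\|P_\mu H_N(\psi_1^\dagger\gamma^0\psi_2)\|_{L^2_tL^2_x}\,\|P_\mu H_N(\psi_4^\dagger\gamma^0\psi_3)\|_{L^2_tL^2_x},
\end{equation*}
with $\widehat{V_b}(\mu) = (b^2+\mu^2)^{-1} \lesssim \mu^{-2}$. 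Interpolating the non-null transversal bound \eqref{hhl-trans} against the angular improvement \eqref{hhl-ang-int} (Lemma \ref{main-bi}) yields, for a small $\delta > 0$,
\begin{equation*}
\|P_\mu H_N(\varphi_{\lambda_1,N_1}^\dagger\gamma^0\psi_{\lambda_2,N_2})\|_{L^2_tL^2_x} \lesssim \mu\Bigl(\frac{\mu}{\min(\lambda_1,\lambda_2)}\Bigr)^\delta (\min(N_1,N_2))^\delta \|\varphi\|_{V^2_{\theta_1}}\|\psi\|_{V^2_{\theta_2}}.
\end{equation*}
In the diagonal-sign case $\theta_1=\theta_2$, the null bound \eqref{int-dirac-null} supplies a stronger angle-type gain that makes this subregime essentially elementary.

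Applying the bilinear estimate twice inside $\mathcal{Q}$, the dangerous Coulomb factor $\mu^{-2}$ is beaten by $\mu^2 (\mu/\min\lambda_i)^{2\delta}$, whose sum over $\mu \lesssim \min(\lambda_1,\lambda_2,\lambda_3,\lambda_4)$ converges. The high-output regime $\mu\gtrsim\max\lambda_j$ is easier because $\widehat{V_b}$ decays there. The angular loss $(\min N_j)^{2\delta}$ is absorbed by choosing $\delta < \sigma$, after which the $N$-summation is controlled by the angular weight in $\|\cdot\|_{F^\sigma}$. Putting the pieces together yields $|\mathcal{Q}|\lesssim\prod_j\|\psi_j\|_{F^\sigma}$, and a standard Picard iteration in the small ball $\{\|\psi\|_{F^\sigma} \le 2\|\psi_0\|_{L^{2,\sigma}}\}$ produces the global solution. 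Scattering is then automatic from $V^2_\theta \hookrightarrow L^\infty_t L^2_x$ and the Cauchy criterion for the nonlinear Duhamel term in $V^2_\theta$.

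The main obstacle is precisely the low-output regime $\mu \ll 1 \ll \lambda_1\approx\lambda_2$ for the Coulomb endpoint $b=0$ under mixed signs $(\theta_1,\theta_2)=(+,-)$, where no null-form cancellation is available and $\widehat{V_0}(\mu)=\mu^{-2}$ genuinely blows up. Establishing the angular gain $(\mu/\min\lambda_i)^{\mathfrak d}$ with $\mathfrak d$ strictly larger than what is needed to defeat $\mu^{-2}$ in \eqref{hhl-ang-int} requires a Whitney/Wolff decomposition of both input supports into angular caps of scale $\sim\sqrt{\mu/\min\lambda_i}$, combined with sharp cap-localized $L^4$-Strichartz estimates on the half-wave flow and the atomic orthogonality of $V^2_\theta$ across caps. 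The interplay of this geometric decomposition with the resonance function $\mathcal{M}_{\theta_{1234}}$ from \eqref{intro-modulation} and with the angular projector $H_N$ is the delicate technical heart of the argument, and the reason the regularity threshold $\sigma > 0$ appears rather than $\sigma = 0$.
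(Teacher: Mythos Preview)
Your high-level reduction matches the paper: half-wave decomposition, $V^2$ duality (Lemma~\ref{energy-ineq}) to the quadrilinear form, Cauchy--Schwarz into two $L^2_{t,x}$ bilinears weighted by $\widehat V_b(\mu)\lesssim\mu^{-2}$, and interpolation of the non-null bound \eqref{hhl-trans} against the angular bound \eqref{hhl-ang-int}. That architecture is correct, and your identification of the dangerous regime (mixed signs $\theta_1\neq\theta_2$, $\mu\ll\lambda_1\approx\lambda_2$, $b=0$) is exactly the one singled out in the paper.

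The gap is in your final paragraph, where you describe the mechanism behind \eqref{hhl-ang-int}. A Whitney-type decomposition into caps of scale $\sim\sqrt{\mu/\lambda}$ together with cap-localized $L^4$ Strichartz is the toolkit for the \emph{null} subregime $\theta_1=\theta_2$ --- there the null form \eqref{dirac-null} supplies the factor $\alpha$ that turns the cap sum into a frequency gain (this is \eqref{bi-est-null} in the paper). In the mixed-sign case there is no null form, and that same decomposition by itself only reproduces the bound $\mu\|\varphi\|_{V^2}\|\psi\|_{V^2}$ with no gain in $\mu/\lambda$; nothing in your description spends the angular regularity $H_N$.

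The paper's actual mechanism for \eqref{hhl-ang-int} uses two specific angular inputs you do not mention. First, the angular concentration estimate (Lemma~\ref{ang-con}): $\|R_\kappa P_\lambda H_N f\|_{L^p}\lesssim(\alpha N)^s\|P_\lambda H_Nf\|_{L^p}$, which converts localisation to a cap of radius $\alpha$ into a power of $\alpha N$. Second, Sterbenz's improved angular Strichartz (Proposition~\ref{stri-ang}), allowing $L^q_tL^4_x$ with $q$ close to $2$ at the price of $N^{1/2+\eta}$. The paper decomposes into caps of radius $\alpha=\mu/\lambda$ (not $\sqrt{\mu/\lambda}$), applies angular concentration plus the improved $L^q_tL^4_x$ estimate on one factor and the ordinary Strichartz estimate on the other, and obtains \eqref{bi-est-ang} with gain $(\mu/\lambda)^{1/4-\delta}\min(N_1,N_2)$. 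This is the step where the extra angular regularity $\sigma>0$ is genuinely used; without it the interpolation you propose has nothing to interpolate against.
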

Theorem \ref{gwp-hartree1} presents the positive answers on the scattering problems of the equations \eqref{main-hartree} with both Yukawa-type potential and Coulomb-type potential in the mass-less case. Now we pay attention to the equations with $\Gamma=\gamma^0$, whose nonlinearity does not possess the null structure as the case $\Gamma=I_{4\times4}$. In the massive case, i.e., $M>0$ the authors of \cite{chohlee} observed non-scattering for $b=0$. However, we get the positive answer on the scattering problem in the mass-less case even when $b=0$. 
\begin{thm}\label{gwp-hartree2}
Let $M=0$ and $b\ge0$. We consider the equations \eqref{main-hartree} with $\Gamma=\gamma^0$. We let $\sigma=1$. Suppose that the initial data $\psi_0\in L^{2,\sigma}(\mathbb R^3)$ satisfies
$$
\|\psi_0\|_{L^{2,\sigma}} \ll1.
$$	
Then the Cauchy problems for the equation \eqref{main-hartree} is globally well-posed and the solutions scatter to free waves. 
\end{thm}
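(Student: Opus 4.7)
The plan is to run a small-data fixed-point iteration in a function space built on the half-wave $V^2_\theta$ norms weighted by $\langle\Omega\rangle^\sigma$ with $\sigma=1$, in the spirit of \cite{chohlee}, but substantially modified to cope with the absence of null structure for $\Gamma=\gamma^0$. Because $M=0$, the commutator identity \eqref{comm-massive} degenerates to $\gamma^0\Pi_\theta^0=\Pi_{-\theta}^0\gamma^0$, and each projected component $\Pi_\theta^0\psi$ is transported by the half-wave $e^{-\theta it|\nabla|}$. Duhamel's formula then recasts \eqref{main-hartree} as a fixed-point equation, and invoking the energy inequality (Lemma \ref{energy-ineq}) reduces boundedness and contraction to a quadrilinear estimate of the shape
\begin{align*}
\Bigl| \int_{\mathbb R^{1+3}} V_b * (\psi_1^\dagger \psi_2)(\psi_4^\dagger \psi_3)\,dx\,dt \Bigr| \lesssim \prod_{j=1}^{4}\|\psi_j\|_{F^1},
\end{align*}
where $F^1$ denotes the iteration space and the four spinors range over it.

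I would next perform a full Littlewood-Paley, angular, and sign decomposition $\psi_j=\sum P_{\lambda_j}H_{N_j}\Pi_{\theta_j}^0\psi_j$, together with $P_\mu$ on each interior bilinear. Cauchy-Schwarz in $(t,x)$ and the Fourier identity $\widehat{V_b}(\xi)=(|\xi|^2+b^2)^{-1}$ reduce control of the quadrilinear form to
\begin{align*}
\sum_{\mu,\lambda_j,N_j}\frac{1}{\mu^2+b^2}\,\|P_\mu(\psi_{1}^\dagger\psi_{2})\|_{L^2_{t,x}}\,\|P_\mu(\psi_{4}^\dagger\psi_{3})\|_{L^2_{t,x}}.
\end{align*}
As emphasised in the introduction, the decisive zone is the Coulomb case $b=0$ together with the low-output resonant regime $\mu\ll\lambda_1\approx\lambda_2$ (and symmetrically for the second bilinear), where the singular factor $\mu^{-2}$ must be absorbed by a bilinear gain.

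The key new input is an $L^2_{t,x}$ bilinear estimate for $\varphi^\dagger\psi$, that is, with no $\gamma^0$ in the middle. Using self-adjointness of the massless projections, $(\Pi_\theta^0)^\dagger=\Pi_\theta^0$, one obtains
\begin{align*}
(\Pi_{\theta_1}\varphi)^\dagger(\Pi_{\theta_2}\psi)=\varphi^\dagger\Pi_{\theta_1}\Pi_{\theta_2}\psi=\delta_{\theta_1,\theta_2}\,\varphi^\dagger\Pi_{\theta_1}\psi,
\end{align*}
so only aligned-sign pairs survive and the cancellation $|\Pi_\theta(\xi)\Pi_{-\theta}(\eta)|\lesssim\angle(\xi,\eta)$ that powered the argument for $\Gamma=I_{4\times 4}$ is simply \emph{not accessible}. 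Every resonant configuration must therefore be tamed by pure angular transversality. Following the thin-tube/angular cap decomposition on the light cones and a transference to free solutions, I would establish an analogue of \eqref{hhl-ang-int},
\begin{align*}
\|P_\mu H_N(\varphi_{\lambda_1,N_1}^\dagger\psi_{\lambda_2,N_2})\|_{L^2_tL^2_x}\lesssim\mu\Bigl(\tfrac{\mu}{\min\{\lambda_1,\lambda_2\}}\Bigr)^{\mathfrak d}\min\{N_1,N_2\}\,\|\varphi_{\lambda_1,N_1}\|_{V^2_{\theta_1}}\|\psi_{\lambda_2,N_2}\|_{V^2_{\theta_2}},
\end{align*}
with some $\mathfrak d$ slightly less than $1/4$. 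The linear factor $\min\{N_1,N_2\}$ is precisely the price of trading the null form for angular regularity, and it is this factor that forces $\sigma=1$: distributing one $\langle\Omega\rangle$ weight per spinor input via Cauchy-Schwarz yields exactly the right budget, whereas $\sigma<1$ leaves a loss that cannot be summed.

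Plugging the bilinear bound into the low-output zone produces $\mu^{-2}\cdot\mu^{2}(\mu/\lambda_1)^{2\mathfrak d}=(\mu/\lambda_1)^{2\mathfrak d}$, geometrically summable in $\mu$, while the angular weights collapse after Cauchy-Schwarz against $\langle\Omega\rangle$. The regimes $\mu\gtrsim 1$, comparable-output, and mismatched-frequency pairs are strictly simpler since $\widehat{V_b}$ is bounded or the bilinear estimate itself supplies additional decay, and they can be handled essentially as in \cite{chohlee,chohlee1}. Standard $V^2_\theta$ machinery then closes the Picard iteration, yields boundedness of the Duhamel term uniformly in time, and upgrades this to scattering in $L^{2,1}$ to a free Dirac solution. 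I expect the principal obstacle to be the bilinear estimate above: without the structural gain furnished by $\gamma^0$, every unit of control must come from angular concentration, which is exactly why the scaling-critical scattering result for $\Gamma=\gamma^0$ lands at the threshold $\sigma=1$.
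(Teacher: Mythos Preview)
Your overall strategy---fixed-point iteration in $F^{0,1}_\theta$, reduction via Lemma~\ref{energy-ineq} to the quadrilinear form \eqref{quart-int2}, and control of the High$\times$High$\Rightarrow$Low regime by an angularly-enhanced bilinear bound of the shape \eqref{hhl-ang-int} with loss $\min\{N_1,N_2\}$---is exactly the paper's. The tools the paper uses to establish that bilinear estimate (Section~3.2) are the angular concentration Lemma~\ref{ang-con} combined with the improved angular Strichartz estimate Proposition~\ref{stri-ang}, assembled through a Bernstein--H\"older step on caps of width $\mu/\lambda$; your phrase ``thin-tube/angular cap decomposition'' points in this direction, though you should name the improved Strichartz input explicitly, since cap decomposition plus the angular concentration bound alone do not yield the required factor $\mu(\mu/\lambda)^{\mathfrak d}$.

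One genuine error must be corrected, although it turns out not to be load-bearing. The displayed identity $(\Pi_{\theta_1}\varphi)^\dagger(\Pi_{\theta_2}\psi)=\delta_{\theta_1,\theta_2}\,\varphi^\dagger\Pi_{\theta_1}\psi$ is false: $\Pi_\theta$ is a Fourier multiplier, not a fixed matrix, and in the bilinear product the two symbols are evaluated at \emph{different} frequencies $\eta$ and $\xi-\eta$, so $\Pi_{\theta_1}(\eta)\Pi_{\theta_2}(\xi-\eta)$ does not vanish when $\theta_1\neq\theta_2$. All four sign pairs contribute to $\psi^\dagger\psi$, and in particular the transverse resonant configuration \eqref{res-int2} is present and must be handled. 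Your conclusion---that no null structure is available for $\psi^\dagger\psi$ and the entire gain must come from angular regularity---is nonetheless correct, because without the intervening $\gamma^0$ there is no commutation identity producing a bound like \eqref{dirac-null}; and since your proposed bilinear estimate is sign-agnostic, the remainder of the argument survives the correction unchanged.
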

The main drawback of the equations with $\Gamma=\gamma^0$ is that one cannot take an advantage of the null structure, which can relax the resonant interactions arising from especially the High $\times$ High $\Rightarrow$ Low interactions. To overcome the problem we exploit fully an angular momentum operator, which turns out to be the efficient equipment to eliminate such a bad interaction. Indeed, the use of an improved space-time estimate by spending an additional angular regularity (Proposition \ref{stri-ang}), and the application of an angular concentration phenomena (Lemma \ref{ang-con}) show that a certain amount of an angular regularity can substitute for the role of null structures in some sense.   

Furthremore, in the proof of Theorem \ref{gwp-hartree2} we see that the identical argument implies that one can obtain scattering results for the equations \eqref{main-hartree} with $\Gamma=\gamma^0$ and $M>0$, and $b>0$.
\begin{thm}\label{gwp-hartree4}
Let $M>0$ and $b>0$. We consider the equations \eqref{main-hartree} with $\Gamma=\gamma^0$. We let $\sigma=1$. Suppose that the initial data $\psi_0\in L^{2,\sigma}(\mathbb R^3)$ satisfies
$$
\|\psi_0\|_{L^{2,\sigma}} \ll1.
$$	
Then the Cauchy problems for the equation \eqref{main-hartree} is globally well-posed and the solutions scatter to free waves. 
\end{thm}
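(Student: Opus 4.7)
The plan is to establish Theorem \ref{gwp-hartree4} by a contraction mapping argument in a small ball of a Banach space adapted to the half-wave extensions of the Dirac operator — specifically, a $V^2$-type space carrying one angular derivative, built out of the projections $\Pi^M_{\pm}$. Writing \eqref{main-hartree} in Duhamel form and decomposing $\psi = \Pi^M_+\psi + \Pi^M_-\psi$, the trilinear nonlinearity $V_b * (\psi^\dagger \gamma^0 \psi)\,\gamma^0 \psi$ must be controlled in the dual norm of this space. By the energy inequality (Lemma \ref{energy-ineq}), the closing estimate reduces to the quadrilinear spacetime bound on
$$
\sum_{\theta_{1234}} \int_{\mathbb R^{1+3}} V_b * \bigl((\Pi^M_{\theta_1}\psi_{\lambda_1})^\dagger \gamma^0 (\Pi^M_{\theta_2}\psi_{\lambda_2})\bigr)\,(\Pi^M_{\theta_4}\psi_{\lambda_4})^\dagger \gamma^0 (\Pi^M_{\theta_3}\psi_{\lambda_3})\, dx\, dt,
$$
which, after full dyadic Littlewood-Paley and angular decomposition, must be summable in the frequency parameters $\mu, \lambda_j, N_j$ in terms of the $V^2$-norms of the four factors.

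Next I would decompose each inner product $(\Pi^M_{\theta_i}\psi_{\lambda_i})^\dagger \gamma^0 (\Pi^M_{\theta_j}\psi_{\lambda_j})$ by its output spatial frequency $\mu$ via $P_\mu$ and output angular frequency $N$ via $H_N$. Since $b > 0$, the symbol $\widehat{V_b}(\xi) = \langle\xi\rangle_b^{-2}$ is bounded and smooth, so the potential contributes the harmless weight $\min(\mu^{-2}, b^{-2})$ and one never encounters the Coulombic singularity at $\mu = 0$. The High $\times$ Low and High $\times$ High $\Rightarrow$ High regimes can be treated by the standard bilinear estimates already employed in \cite{chohlee,chohlee1}; the genuine difficulty lies in the High $\times$ High $\Rightarrow$ Low regime $\mu \ll \lambda_1 \approx \lambda_2$, where all the resonant mass is concentrated.

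The main obstacle is exactly the resonant sign configuration $(\theta_1,\theta_2,\theta_3,\theta_4) = (+,-,-,+)$ or $(-,+,+,-)$ with $\lambda_1\approx\lambda_2\approx\lambda_3\approx\lambda_4$, for which the commutator identity \eqref{comm-massive} and the null-form bound \eqref{int-dirac-null} yield \emph{no} cancellation on the $\gamma^0$ pairing and the modulation function \eqref{intro-modulation} can be arbitrarily small. To overcome this I would interpolate between the plain transversality bound \eqref{hhl-trans} and the angularly refined bilinear estimate \eqref{hhl-ang-int} of Lemma \ref{main-bi}, producing the factor $(\mu/\min\{\lambda_1,\lambda_2\})^\delta (\min\{N_1,N_2\})^\delta$ for a small $\delta < \tfrac14$ on each bilinear pair. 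The angular Strichartz improvement of Proposition \ref{stri-ang} and the angular concentration lemma (Lemma \ref{ang-con}) then supply the extra summability in $N_j$ and in the radial dyadic sums; the choice $\sigma = 1$ is precisely what is needed to absorb the factors $\min\{N_1,N_2\}$ coming from the two bilinear blocks.

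Finally, the argument in the massive Yukawa case is essentially identical to the proof of Theorem \ref{gwp-hartree2} in the massless Coulomb case, as the author has already indicated. Indeed, both modifications are favorable: switching from $b=0$ to $b>0$ strictly improves the bilinear estimates by removing the singular behaviour of $\widehat{V_b}$ at the origin, while the mass terms $M\langle\xi\rangle_M^{-1}$ arising from \eqref{comm-massive} are bounded at high frequency and are controlled at low frequency by the output-frequency weight $\mu$. After summing the dyadic pieces the quadrilinear form is bounded by $C\|\psi\|_{F^{0,1}}^4$ for the relevant adapted space, and the standard fixed-point and $V^2$-tail arguments give both global well-posedness and scattering to free Dirac solutions whenever $\|\psi_0\|_{L^{2,1}} \ll 1$.
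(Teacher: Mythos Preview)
Your outline conflates the $\Gamma=I_{4\times4}$ and $\Gamma=\gamma^0$ cases. For $\Gamma=\gamma^0$ the nonlinearity in \eqref{main-hartree} is $V_b*(\psi^\dagger\gamma^0\gamma^0\psi)\gamma^0\psi=V_b*(\psi^\dagger\psi)\gamma^0\psi$, so after projection the half-wave system is \eqref{dirac-decom1} and the relevant quadrilinear form is \eqref{quart-int2}, built from the bilinear form $\psi^\dagger\psi$ \emph{without} the $\gamma^0$. Your displayed integral and all of the subsequent discussion treat $\psi^\dagger\gamma^0\psi$ instead, which is the $\Gamma=I_{4\times4}$ object.

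This is not merely cosmetic: it reverses which sign configuration is dangerous. For $\psi^\dagger\psi$ one has $\Pi^M_{\theta_1}(\xi_1)^\dagger\Pi^M_{\theta_2}(\xi_2)=\Pi^M_{\theta_1}(\xi_1)\Pi^M_{\theta_2}(\xi_2)$, and the null-form bound \eqref{int-dirac-null} gives cancellation precisely when $\theta_1\neq\theta_2$; it gives \emph{nothing} when $\theta_1=\theta_2$. Thus the genuine obstacle in the $\Gamma=\gamma^0$ problem is the resonant case (1), $\theta_1=\theta_2$ with $\mu\ll\lambda_1\approx\lambda_2$, not the case (2) configuration $(+,-,-,+)$ that you single out. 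In that $\theta_1=\theta_2$ regime the transversality bound \eqref{hhl-trans} is unavailable (it is stated for $\theta_1\neq\theta_2$), and without the null form the plain refined-$L^4$ argument only yields $(\mu\lambda)^{1/2}=\mu(\lambda/\mu)^{1/2}$, which diverges in the frequency ratio. Your proposed interpolation between \eqref{hhl-trans} and \eqref{hhl-ang-int} therefore has no first endpoint to interpolate from in the case that actually matters.

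The paper's route for $\Gamma=\gamma^0$ is accordingly different from what you describe: it abandons null structure and transversality entirely and relies \emph{solely} on the angular mechanism (Proposition~\ref{stri-ang} together with Lemma~\ref{ang-con}) to prove \eqref{main-est2}, uniformly over all sign configurations and without any modulation decomposition. That estimate carries the factor $(\min\{N_1,N_2\})^{1-\epsilon}$, not $(\min\{N_1,N_2\})^{\delta}$, and this is exactly why $\sigma=1$ is forced rather than merely $\sigma>0$; your own sentence about $\sigma=1$ absorbing the $\min\{N_1,N_2\}$ factors is in tension with the small-$\delta$ loss you claim just before it. Your final paragraph --- that the massive Yukawa case then follows verbatim from the massless argument of Theorem~\ref{gwp-hartree2} because $b>0$ only helps and the mass corrections are harmless --- is correct and is precisely what the paper asserts, but the bilinear input feeding that reduction must be \eqref{main-est2}, obtained purely by the angular argument.
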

We postpone the main strategy of the proof of the main Theorem to Section 3. Instead, we summarize the scattering results for the cubic Dirac equations with the Hartree-type nonlinearity on $\mathbb R^{1+3}$. 
\begin{align}\label{table1}
\begin{tabular}{||c|c|c||}
\hline
	$\Gamma = I_{4\times4}$ & $M>0$ & $M=0$ \\
	\hline
	$b>0$ & Theorem \ref{gwp-hartree3} & Theorem \ref{gwp-hartree1} \\
	\hline
	$b =0$ & open  & Theorem \ref{gwp-hartree1}\\
	\hline
\end{tabular}	
\end{align}
The table \eqref{table1} shows that only massive case with the Coulomb potential is open. 
The table \eqref{table2} below shows the scattering results for the case $\Gamma=\gamma^0$. 
\begin{align}\label{table2}
\begin{tabular}{||c|c|c||}
\hline
	$\Gamma = \gamma^0$ & $M>0$ & $M=0$ \\
	\hline
	$b>0$ & Theorem \ref{gwp-hartree4} & Theorem \ref{gwp-hartree2} \\
	\hline
	$b =0$ & Theorem \ref{nonscatter-thm}  &  Theorem \ref{gwp-hartree2} \\
	\hline
\end{tabular}	
\end{align}
Recently the authors of \cite{chohoz} studied the scattering problems for the equation \eqref{main-hartree} for the case $\Gamma=\gamma^5=i\gamma^0\gamma^1\gamma^2\gamma^3$. It turns out that the commutator identity $\gamma^0\gamma^5\Pi_\theta^M=\Pi_{-\theta}^M\gamma^0\gamma^5$ holds and hence we can utilise the null structure in the same way as the case $\Gamma=I_{4\times4}$.
\begin{align}\label{table3}
\begin{tabular}{||c|c|c||}
\hline
	$\Gamma = \gamma^5$ & $M>0$ & $M=0$ \\
	\hline
	$b>0$ & \cite{chohoz} & Theorem \ref{gwp-hartree1} \\
	\hline
	$b =0$ & open  &  Theorem \ref{gwp-hartree1} \\
	\hline
\end{tabular}	
\end{align}
\begin{rem}
We would like to discuss briefly the massive case, i.e., $M>0$.
When $\Gamma=I_{4\times4}$, we shall use the commutator identity \eqref{comm-massive} to exploit the null structure in the bilinear form $\psi^\dagger\gamma^0\psi$. If $b>0$, i.e., $V_b$ is the Yukawa-type potential, the second term in the right-handside of \eqref{comm-massive} is nothing but an error term. On the other hand, if $b=0$, i.e., $V_b$ is the Coulomb-type potential, the term $\langle\nabla\rangle_M^{-1}$ is not an error term anymore. (See also \eqref{int-dirac-null}.) When two high-input frequencies cause low-output frequency $\mu\lesssim1$, the additional term $\langle\nabla\rangle_M^{-1}$ remains problematic; one cannot relax the whole singularity $|\xi|^{-2}$ even though one utilise the null structure. This is why answers on the scattering problem for the massive case with $\Gamma=I_{4\times4}$ or $\Gamma=\gamma^5$ and $b=0$ are still open. Therefore, it will be left to our future work.
\end{rem}
\begin{rem}
As small data scattering is established, the natural question is whether arbitrarily large data scattering is also possible. It is still open, however, it is partially positive provided that one impose a certain condition on initial datum \cite{chohlee1}. Indeed, one approach is to define a controlling space-time Lebesgue norm and obtain bounded solutions which exists globally in time and scatter, as the controlling norm remains bounded. On the other hand, one can also apply charge conjugation approach (or the Majorana condition \cite{majorana}). In the several cases of various choices of $\Gamma\in\mathbb C^{4\times4}$, in which positive answers on the scattering problems of the equations hold at least for small initial data, one can follow the aforementioned approaches and establish conditional large data scattering. Since the proof is quite similar as \cite{candyherr1,chohlee1}, we only present in Appendix the proof of bilinear estimates, which is required to obtain the desired conditional-large data scattering.
\end{rem}
\subsection{The boson star equation}
We end this section with a brief introduction on a related equation.
We present the Cauchy problems for the boson star equation (or the semi-relativistic equation with the Hartree-type nonlinearity) on $\mathbb R^{1+3}$:
\begin{align}\label{boson-star}
\left\{
\begin{array}{l}
	-i\partial_tu + \sqrt{m^2-\Delta}u = (V_b*|u|^2)u, \\
	u|_{t=0} = u_0
\end{array}\right.	
\end{align}
We refer to \cite{chooz,herrlenz,herrtes,pusa} for this well-studied equation. After the use of the Dirac projection operators (see Section \ref{sec:dirac-op}) cubic Dirac equations \eqref{main-hartree} with $\Gamma=\gamma^0$ and $b>0$ is of the form \eqref{boson-star}. Thus as a direct application of Theorem \ref{gwp-hartree4}, we have the following. 
\begin{cor}\label{dirac-appli}
Suppose that $m>0$ and $b>0$. Let $\sigma=1$. Suppose that the initial data $u_0\in L^{2,\sigma}$ satisfies $\|u_0\|_{L^{2,\sigma}}\ll1$. The Cauchy problems for the equation \eqref{boson-star} is globally well-posed and scatters to free solutions as $t\rightarrow\pm\infty$.	
\end{cor}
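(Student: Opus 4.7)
My plan is to transfer the scattering machinery established for the Dirac Hartree equation directly to the scalar boson-star setting. This is possible because the free boson-star propagator $e^{-it\sqrt{m^2-\Delta}}$ coincides with the free flow of the positive Dirac projection $\Pi_+^m\psi$ for \eqref{main-hartree} with $\Gamma=\gamma^0$ and $M=m$, while the scalar trilinear form $(V_b*|u|^2)u$ is structurally a scalar copy of the Dirac Hartree nonlinearity $(V_b*(\psi^\dagger\psi))\psi$. Consequently the adapted resolution space and the multilinear estimates underpinning Theorem \ref{gwp-hartree4} apply verbatim, and the scalar setting is in fact strictly easier since only the ``$+$''-characteristic hypersurface $\tau=-\langle\xi\rangle_m$ is present.

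Concretely, I would set up a contraction for $u$ in the same resolution space based on $V^2_+$ with angular weight $\langle\Omega\rangle^\sigma$, $\sigma=1$, as the one used to prove Theorem \ref{gwp-hartree4}. By the energy inequality (Lemma \ref{energy-ineq}) this reduces, after dyadic and angular decomposition, to controlling the quartic integral
\begin{equation*}
\Big|\iint V_b*(u_{\lambda_1,N_1}\overline{u_{\lambda_2,N_2}})\,u_{\lambda_3,N_3}\overline{u_{\lambda_4,N_4}}\,dx\,dt\Big|
\end{equation*}
via Plancherel and the Fourier weight $\widehat{V_b}(\xi)=\langle\xi\rangle_b^{-2}$. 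The crucial ingredient is the scalar analogue of the bilinear bound \eqref{hhl-ang-int}, namely
\begin{equation*}
\|P_\mu H_N(u_{\lambda_1,N_1}\overline{u_{\lambda_2,N_2}})\|_{L^2_tL^2_x}\lesssim \mu\Bigl(\tfrac{\mu}{\min(\lambda_1,\lambda_2)}\Bigr)^{\delta}(\min(N_1,N_2))^\delta\|u_{\lambda_1,N_1}\|_{V^2_+}\|u_{\lambda_2,N_2}\|_{V^2_+},
\end{equation*}
which follows from Lemma \ref{main-bi} combined with the angular-concentration phenomenon of Lemma \ref{ang-con}.

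The only point to verify is that the bilinear estimate of Lemma \ref{main-bi} survives when the projected spinor $\Pi_{\theta_j}^m\psi$ is replaced by a scalar half-wave solution $u$. This is automatic: only the ``$++$''-branch is activated, the matrix $\gamma^0$ then contributes an overall sign, and the proof of \eqref{hhl-ang-int} uses only the characteristic hypersurface $\tau=-\langle\xi\rangle_m$, which is common to both settings. The delicate opposite-characteristic resonant interactions that drove the heaviest part of the argument for Theorem \ref{gwp-hartree4} are simply absent here, so the trilinear bound closes under the smallness hypothesis $\|u_0\|_{L^{2,1}}\ll1$ in strictly easier fashion than in the spinor case. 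Standard arguments then upgrade global well-posedness in $L^{2,1}$ to scattering to a free half-wave solution as $t\to\pm\infty$, and the main obstacle, if any, is purely bookkeeping, namely to re-index the dyadic summations and verify that the loss of the ``$-$'' sector causes no gap in the proof.
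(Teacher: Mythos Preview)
Your approach is exactly the paper's: Corollary~\ref{dirac-appli} is declared a direct application of Theorem~\ref{gwp-hartree4}, and your proposal is a correct unpacking of that remark, noting that the scalar equation is the single-sign slice $(\theta_1,\theta_2,\theta_3,\theta_4)=(+,+,+,+)$ of the projected Dirac system \eqref{dirac-decom1}.

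One inaccuracy is worth flagging. The bilinear bound you invoke, with angular loss $(\min(N_1,N_2))^\delta$, is the \emph{null-form} estimate \eqref{main-est1} for $\varphi^\dagger\gamma^0\psi$, which pertains to $\Gamma=I_{4\times4}$. The boson-star bilinear form $\bar u\,u$ is the scalar analogue of $\psi^\dagger\psi$ (i.e.\ $\Gamma=\gamma^0$), which carries no null structure; the relevant estimate is therefore \eqref{main-est2}, with the full angular loss $(\min(N_1,N_2))^{1-\epsilon}$. This is precisely why the corollary requires $\sigma=1$ rather than any $\sigma>0$. With this correction the contraction closes exactly as in Theorem~\ref{gwp-hartree4}. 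Relatedly, your remark that ``the delicate opposite-characteristic resonant interactions\ldots are simply absent here'' overstates the simplification: for $\Gamma=\gamma^0$ those interactions are not the bottleneck, since \eqref{main-est2} is already uniform in $(\theta_1,\theta_2)$ and the worst case $\theta_1=\theta_2$ with $\mu\ll\lambda$ is still present in the scalar problem.
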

\noindent By Corollary \ref{dirac-appli} we improve the previous results on the Cauchy problems for \eqref{boson-star} and attain the scaling critical regularity. We also refer to \cite{gioz}, which concerns Schr\"odinger equations with the Hartree-type nonlinearity.
\subsection*{Organisation}
In what follows, we present the basic notations. In Section 2 we give preliminaries, which involves the Dirac operators, analysis on the unit sphere, $U^p-V^p$ spaces, and several auxiliary estimates. Section 3 is devoted to the proof of our main results. We briefly discuss the proof of conditional large data scattering in Appendix. 
\subsection*{Notations}
\begin{enumerate}
\item
As usual different positive constants, which are independent of dyadic numbers $\mu,\lambda$, and $d$ are denoted by the same letter $C$, if not specified. The inequalities $A \lesssim B$ and $A \gtrsim B$ means that $A \le CB$ and
$A \ge C^{-1}B$, respectively for some $C>0$. By the notation $A \approx B$ we mean that $A \lesssim B$ and $A \gtrsim B$, i.e., $\frac1CB \le A\le CB $ for some absolute constant $C$. We also use the notation $A\ll B$ if $A\le \frac1CB$ for some large constant $C$. Thus for quantities $A$ and $B$, we can consider three cases: $A\approx B$, $A\ll B$ and $A\gg B$. In fact, $A\lesssim B$ means that $A\approx B$ or $A\ll B$.
\item 
The spatial and space-time Fourier transform are defined by
$$
\widehat{f}(\xi) = \int_{\mathbb R^3} e^{-ix\cdot\xi}f(x)\,dx, \quad \widetilde{u}(\tau,\xi) = \int_{\mathbb R^{1+3}}e^{-i(t\tau+x\cdot\xi)}u(t,x)\,dtdx.
$$
We also write $\mathcal F_x(f)=\widehat{f}$ and $\mathcal F_{t, x}(u)=\widetilde{u}$. We denote the backward and forward wave propagation of a function $f$ on $\mathbb R^3$ by
$$
e^{-\theta it |\nabla|}f = \frac1{(2\pi)^3}\int_{\mathbb R^3}e^{ix\cdot\xi}e^{-\theta it|\xi|}\widehat{f}(\xi)\,d\xi,
$$
where $\theta\in\{+,-\}$.
\item For any function $m$ on $\mathbb R^3$, we define $m(-i\nabla)$ as the Fourier multiplier operator with symbol $m(\xi)$, i.e., $\mathcal F_x[m(-i\nabla)f]=m(\xi)\widehat{f}(\xi)$. For example, recall that $V_b=\frac1{4\pi}\frac{e^{-b|x|}}{|x|}$. We write $V_b*f=\langle\nabla\rangle_b^{-2}f$.
\end{enumerate}

\section{Preliminaries}
\subsection{Dirac projection operators}\label{sec:dirac-op}
We let $(\mathbb R^{1+3},\mathbf m)$ be the Minkowski space with the metric $\mathbf m_{\mu\nu}=\textrm{diag}(-1,1,1,1)$. We first consider the gamma matrices $\gamma^\mu\in\mathbb C^{4\times4}$, $\mu=0,1,\cdots,3$, given by
\begin{align*}
\gamma^0 = \begin{bmatrix} 
 	I_{2\times2} & \mathbf0 \\ \mathbf0 & -I_{2\times2}
 \end{bmatrix}, \ \gamma^j = \begin{bmatrix} 
 	\mathbf 0 & \sigma^j \\ -\sigma^j & \mathbf0 
 \end{bmatrix}	,
\end{align*}
with the Pauli matrices $\sigma^j\in\mathbb C^{2\times2}$, $j=1,2,3$, given by
\begin{align*}
\sigma^1 = \begin{bmatrix} 
 	0 & 1 \\ 1 & 0 
 \end{bmatrix}, \ \sigma^2 = \begin{bmatrix} 
 	0 & -i \\ i & 0 
 \end{bmatrix}, \  \sigma^3=\begin{bmatrix} 
 	1 & 0 \\ 0 & -1
 \end{bmatrix}.	
\end{align*}
An easy computation implies that the Pauli matrices satisfy the following algebraic properties
$$
\sigma^j\sigma^k+\sigma^k\sigma^j = 2\delta^{jk}I_{2\times2}, \quad \sigma^1\sigma^2\sigma^3 = iI_{2\times2},
$$ 
where $\delta^{jk}$ is the usual Kronecker delta. Using these algebraic relation, we also obtain for $\gamma^\mu$:
\begin{align}\label{gamma-relation}
\gamma^\mu\gamma^\nu+\gamma^\nu\gamma^\mu = -2\mathbf m^{\mu\nu}I_{4\times4}.
\end{align}
Now we introduce the Dirac operator with a mass $M\ge0$
\begin{align}\label{dirac-operator}
\mathfrak D\psi = (-i\gamma^\mu\partial_\mu+M)\psi.	
\end{align}
Then the equation \eqref{main-hartree} is rewritten as
$$
\mathfrak D\psi - V_b*(\psi^\dagger\gamma^0\Gamma\psi)\Gamma\psi=0.
$$
\begin{rem}
	It is possible to relax the restriction on the gamma matrices. In fact, one only needs the assumption \eqref{gamma-relation} on the gamma matrices to study the Dirac operator. Nevertheless, we adapt the specific representative of the gamma matrices obtained by the Pauli matrices for the convenience. We refer the readers to \cite{ozya}.
\end{rem}
In the practical study of the Dirac equation, it is accessible to rewrite the equation as the half-wave equation. For this purpose, we introduce the projection operators for $\theta\in\{+,-\}$
\begin{align}\label{dirac-projection}
\Pi_\theta(\xi) = \frac12\left(I_{4\times4}+\theta\frac{\xi_j\gamma^0\gamma^j}{|\xi|} \right),	
\end{align}
where we used the summation convention.
Now we define the Fourier multiplier $\Pi_\theta$ by the identity $\mathcal F_x[\Pi_\theta f](\xi) = \Pi_\theta(\xi)\widehat{f}(\xi)$. By an easy computation one easily see the identity $\Pi_\theta\Pi_\theta=\Pi_\theta$ and $\Pi_\theta\Pi_{-\theta}=0$. An important identity of the projection $\Pi_\theta$ is the commutator identity with the gamma matrices, which presents $\gamma^0\Pi_\theta=\Pi_{-\theta}\gamma^0$.  We also have $\psi=\Pi_+\psi+\Pi_-\psi$. Then we see that 
$$
\Pi_\theta(\gamma^0\mathfrak D\psi)= (-i\partial_t+\theta|\nabla|)\Pi_\theta\psi -M\gamma^0\Pi_{-\theta}\psi.
$$
Note that in mass-less case, i.e., $M=0$, the above equation is nothing but a half-wave equation. This is the very first step of study on the dispersive property of the Dirac equation. Now we rewrite the equation \eqref{main-hartree} using the projections \eqref{dirac-projection} in the mass-less case. For $\Gamma=I_{4\times4}$, we obtain
\begin{align}\label{dirac-decom}
\left\{
\begin{array}{l}
	(-i\partial_t+\theta|\nabla|)\psi_\theta = \Pi_\theta[V_b*(\psi^\dagger\gamma^0\psi)\gamma^0\psi], \\
	\psi_\theta|_{t=0}=\psi_{0,\theta},
\end{array}
\right.	
\end{align}
 where $\psi_\theta = \Pi_\theta\psi$. Similarly, for $\Gamma=\gamma^0$ we get
 \begin{align}\label{dirac-decom1}
\left\{
\begin{array}{l}
	(-i\partial_t+\theta|\nabla|)\psi_\theta = \Pi_\theta[V_b*(\psi^\dagger\psi)\psi], \\
	\psi_\theta|_{t=0}=\psi_{0,\theta}.
\end{array}
\right.	
\end{align}
\begin{rem}
When we are concerned with massive Dirac equations, i.e., $M>0$, we need to invoke a slightly modified projection operator for $\theta\in\{+,-\}$
$$
\Pi^M_\theta (\xi) = \frac12\left( I_{4\times4}+\theta \frac{\xi_j\gamma^0\gamma^j+M\gamma^0}{\langle\xi\rangle_M} \right),
$$	
where $\langle\xi\rangle_M=\sqrt{M^2+|\xi|^2}$. Using the projection $\Pi_\theta^M$, we rewrite the equation \eqref{main-hartree} in the massive case as 
$$
(-i\partial_t+\theta\langle\nabla\rangle_M)\psi_\theta = \Pi_\theta^M[V_b*(\psi^\dagger\gamma^0\Gamma\psi)\gamma^0\Gamma\psi],
$$
which turns out to be of the form of half-wave decomposition of nonlinear Klein-Gordon equations. 
\end{rem}

\subsection{Multipliers}\label{multi}
We fix a smooth function $\rho\in C^\infty_0(\mathbb R)$ such that $\rho$ is supported in the set $\{ \frac12<t<2\}$ and we let
$$
\sum_{\lambda\in2^{\mathbb Z}}\rho\left(\frac t\lambda\right) =1.
$$
 We define $\mathcal Q_\mu$ to be a finitely overlapping collection of cubes of diameter $\frac{\mu}{1000}$ covering $\mathbb R^3$, and let $\{ \rho_\mathtt q\}_{\mathtt q\in\mathcal Q_\mu}$ be a corresponding subordinate partition of unity. Now we define the standard Littlewood-Paley multipliers, for $\lambda\in 2^{\mathbb Z}$, $\mathtt q\in\mathcal Q_\mu$, $d\in 2^{\mathbb Z}$:
$$
P_\lambda = \rho\left(\frac{|-i\nabla|}{\lambda}\right),\quad P_q = \rho_q(-i\nabla),\quad C^{\theta}_d = \rho\left(\frac{|-i\partial_t+\theta|\nabla||}{d}\right).
$$
We also define $C^\theta_{\le d}=\sum_{\delta\le d}C^\theta_\delta$ and $C^\theta_{\ge d}$ is defined in the similar way. 
Given $0<\alpha\lesssim1$, we define $\mathcal C_\alpha$ to be a collection of finitely overlapping caps of radius $\alpha$ on the sphere $\mathbb S^2$. If $\kappa\in\mathcal C_\alpha$, we let $\omega_\kappa$ be the centre of the cap $\kappa$. Then we define $\{\rho_\kappa\}_{\kappa\in\mathcal C_\alpha}$ to be a smooth partition of unity subordinate to the conic sectors $\{ \xi\neq0 , \frac{\xi}{|\xi|}\in\kappa \}$ and denote the angular Fourier localisation multipliers by
$
R_\kappa = \rho_\kappa(-i\nabla).
$
\subsection{Analysis on the sphere}\label{an-sph}
We introduce some basic facts from harmonic analysis on the unit sphere. The most of ingredients can be found in \cite{candyherr,ster}. We also refer the readers to \cite{steinweiss} for more systematic introduction to the spherical harmonics. We let ${\mathfrak S}_{\ell}$ be the set of homogeneous harmonic polynomial of degree $\ell$. Then define $\{ Y_{\ell, m} \}_{m = -\ell}^{\ell}$ a set of orthonormal basis for ${\mathfrak S}_{\ell}$, with respect to the inner product:
\begin{align}
\langle Y_{\ell, m}, Y_{\ell',m'}\rangle_{L^2_\omega(\mathbb S^2)} = \int_{\mathbb S^2}{Y_{\ell, m}(\omega)} \overline{Y_{\ell',m'}(\omega)}\,d\omega.
\end{align}
Given $f\in L^2_x(\mathbb R^3)$, we have the orthogonal decomposition as follow:
\begin{align}
f(x) = \sum_{\ell}\sum_{m=-\ell}^{\ell}\langle f(|x|\omega), Y_{\ell, m}(\omega)\rangle_{L^2_\omega(\mathbb S^2)} Y_{\ell, m}\big(\frac{x}{|x|}\big).
\end{align}
For a dyadic number $N>1$, we define the spherical dyadic decompositions by
\begin{align}
H_N(f)(x) & = 	\sum_{\ell}\sum_{m = -\ell}^{\ell}\chi_{[N, 2N)}(\ell)\langle f(|x|\omega), Y_{\ell, m}(\omega)\rangle_{L^2_\omega(\mathbb S^2)}Y_{\ell ,m}\big(\frac{x}{|x|}\big), \\
H_1(f)(x) & = \sum_{\ell = 0, 1}\sum_{m=-\ell}^{\ell}\langle f(|x|\omega), Y_{\ell, m}(\omega)\rangle_{L^2_\omega(\mathbb S^2)}Y_{\ell, m}\big(\frac{x}{|x|}\big),
\end{align}
where $\chi_I$ is the characteristic function supported on the interval $I$, i.e., $\chi(s)=1$ for $s\in I$ and $\chi(s)=0$ when $s\notin I$.
Since $-\Delta_{\mathbb S^2}Y_{\ell, m} = \ell(\ell+1)Y_{\ell, m}$, by orthogonality one can readily get
$$\|\langle\Omega\rangle^\sigma f\|_{L^2_\omega({\mathbb S^2})} \approx \left\|\sum_{N\in2^{\mathbb N \cup\{0\}}}N^\sigma H_Nf\right\|_{L^2_\omega({\mathbb S^2})}.$$
\begin{prop}[Theorem 3.10 of \cite{steinweiss}]\label{sph-fourier}
	Suppose that $f\in L^2_x(\mathbb R^3)$ has the form $f(x) = f_0(|x|)Y_{\ell,m}(\frac{x}{|x|})$, where $Y_{\ell,m}\in\mathfrak S_\ell$. Then the Fourier transform $\widehat f$ of $f$ has the form $\widehat f(\xi)=F_0(|\xi|)Y_{\ell,m}(\frac{\xi}{|\xi|})$, where 
	$$
	F_0(r) = 2\pi i^{-\ell}r^{-\frac{2\ell+1}{2}}\int_0^\infty f_0(s) J_{\frac{2\ell+1}{2}}(2\pi rs) s^{\frac{2\ell+1}{2}}\,ds.
	$$
	Here $J_m(r)$ is the Bessel functions, whose asymptotic behaviour satisfies
	\begin{align*}
	J_m(r) &\approx r^m, \quad r\lesssim1, \\
	J_m(r) &= O(r^{-\frac12}), \quad r\gg1.	
	\end{align*}
\end{prop}
In this paper we do not use the explicit formular of the Fourier transform of the radial part of given functions. The key point is that the set $\mathfrak S_\ell$ is closed under the Fourier transforms. Now we introduce the Sobolev embedding on the unit sphere $\mathbb S^2\subset\mathbb R^3$.
\begin{prop}\label{bernstein-sph}
Let $n\ge2$. Let $f$ be a test function on the unit sphere $\mathbb S^{n-1}\subset\mathbb R^n$. For $2\le q\le p<\infty$, we have
$$
\|H_Nf\|_{L^p_\omega(\mathbb S^{n-1})} \lesssim N^{(n-1)(\frac1q-\frac1p)}\|H_N f\|_{L^q_\omega(\mathbb S^{n-1})}.
$$
\end{prop}


\begin{lem}[Lemma 7.1. of \cite{candyherr}]\label{sph-ortho}
Let $N\ge1$. Then $H_N$ is uniformly bounded on $L^p(\mathbb R^3)$ in $N$, and $H_N$ commutes with all radial Fourier multipliers. Moreover, if $N'\ge1$, then either $N\approx N'$ or
$$
H_N\Pi_\theta H_{N'}=0.
$$	
\end{lem}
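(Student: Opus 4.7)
The plan is to handle the three assertions in turn, with the orthogonality statement being the substantive one. For the $L^p$-boundedness, I would view $H_N$ as the spectral projector $\chi_{[N,2N)}(\sqrt{-\Delta_{\mathbb S^2}})$ acting in the angular variable on each sphere of radius $|x|$. When $p=2$ this is an orthogonal projection and the bound is immediate by Plancherel on the sphere; for other $p$ one would invoke a Sogge-type spectral multiplier estimate for the Laplace--Beltrami operator. Since every downstream use of $H_N$ in the paper takes place inside $L^2$-based $U^p$--$V^p$ norms, the $p=2$ case is what really matters.

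The commutation with radial Fourier multipliers is an immediate consequence of Proposition \ref{sph-fourier}: a function of the pure form $f(x)=f_0(|x|)Y_{\ell,m}(x/|x|)$ has Fourier transform with the same angular factor $Y_{\ell,m}(\xi/|\xi|)$, and only the radial part $F_0(|\xi|)$ changes. Hence any radial Fourier multiplier $m(-i\nabla)$ preserves each subspace $\{g(|x|)Y_{\ell,m}(x/|x|)\}$, and therefore commutes with the spherical band projector $H_N$. The same argument transfers $H_N$ freely between the physical and Fourier pictures.

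For the orthogonality claim, I would first split the symbol of $\Pi_\theta$ as
\begin{align*}
\Pi_\theta(\xi)=\tfrac12 I_{4\times4}+\tfrac{\theta}{2}\,\tfrac{\xi_j}{|\xi|}\,\gamma^0\gamma^j,
\end{align*}
a sum of a constant matrix and a matrix-valued function whose entries $\xi_j/|\xi|$ are degree-one spherical harmonics of $\xi/|\xi|$. The constant piece is a radial Fourier multiplier (times a fixed matrix), so by the preceding step it commutes with $H_N$ and contributes only when $N=N'$, in which case we are already in the regime $N\approx N'$. The remaining piece amounts, on the Fourier side, to multiplication by a degree-one spherical harmonic in $\xi/|\xi|$. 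By the Clebsch--Gordan rule for spherical harmonics on $\mathbb S^2$, any such multiplication sends $Y_\ell$ into the span of $Y_{\ell-1}$ and $Y_{\ell+1}$, so $\Pi_\theta$ shifts the spherical-harmonic degree by at most one. Transferring the outer $H_N$ to the Fourier side, the composition $H_N\Pi_\theta H_{N'}$ first restricts to Fourier-angular degrees in $[N',2N')$, then shifts by at most one to $[N'-1,2N')$, and finally extracts the degrees in $[N,2N)$. These intervals are disjoint unless the dyadic numbers $N$ and $N'$ satisfy $N\approx N'$, which yields the vanishing.

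The main subtlety will be bookkeeping the degree shift in the boundary case $N=1$ or $N'=1$, where the band $[N,2N)$ must be replaced by $\{0,1\}$ as in the definition of $H_1$, so that a single degree shift can only land inside or just outside the dyadic cluster at the bottom of the spectrum; this is still absorbed by the relation $N\approx N'$. The other potentially nontrivial point, as noted above, is the $L^p$ bound for $p\neq 2$, which requires Sogge's sharp-cutoff multiplier theorem on $\mathbb S^2$ rather than any softer argument; for the applications in this paper the $L^2$ case suffices and is essentially tautological.
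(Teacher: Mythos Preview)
The paper does not supply its own proof of this lemma; it is quoted verbatim as Lemma~7.1 of Candy--Herr \cite{candyherr}. Your argument is the standard one and is correct: the commutation with radial multipliers is immediate from Proposition~\ref{sph-fourier}, and the key orthogonality follows from the fact that the non-radial part of $\Pi_\theta(\xi)$ is a matrix whose entries are degree-one spherical harmonics in $\xi/|\xi|$, so that multiplication shifts the degree by exactly $\pm 1$. This same degree-shift mechanism is precisely what the paper spells out in its proof of the neighbouring Lemma~\ref{comm-HN} (there phrased via the recursion formula for associated Legendre polynomials rather than the Clebsch--Gordan rule, but these are the same statement). Your assessment of the $L^p$ bound is also accurate: for $p\neq 2$ one genuinely needs a Bonami--Clerc/Sogge spectral-cluster estimate on $\mathbb S^2$, while the $p=2$ case is trivial and is all that the present paper actually uses.
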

By Lemma \ref{sph-ortho}, we see that $H_N$ commutes with the $P_\lambda$ and $C^\theta_d$ multipliers since we can write $C_d^\theta=e^{-\theta it|\nabla|}\rho(-\frac{i\partial_t}{d})e^{\theta it|\nabla|}$. On the other hand, we note that $H_N$ does not commute with the cube and cap localisation operators $R_\kappa$ and $P_q$, which are non-radial. Obviously, $H_N$ does not commute with $\Pi_\pm$. However we can still enjoy the orthogonality of the projections $\Pi_\theta$ in the presence of $H_N$ by using the followng.
\begin{lem}\label{comm-HN}
Let $\theta\in\{+,-\}$. If $\Pi_{\theta}\phi = 0$ in $L_x^2$, then $\sum_{N \ge 1}\Pi_{\theta}H_N\phi = 0$.
\end{lem}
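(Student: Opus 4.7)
The plan is to observe that the claim is essentially a matter of exchanging $\Pi_{\theta}$ with an $L^2_x$-convergent series, once we verify that the spherical harmonic decomposition $\sum_{N} H_N \phi$ indeed converges to $\phi$ in $L^2_x$ and that $\Pi_{\theta}$ is bounded on $L^2_x$.

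First I would record that for every $f\in L^2_x(\mathbb R^3)$ the identity $f=\sum_{N\ge 1}H_N f$ holds in $L^2_x$. This is a consequence of Plancherel in the radial variable combined with completeness of the spherical harmonics $\{Y_{\ell,m}\}$ in $L^2(\mathbb S^2)$: the dyadic ranges $\ell\in\{0,1\}$ together with $\ell\in[N,2N)$ for $N=2,4,8,\dots$ exhaust all degrees $\ell\ge 0$, and by orthogonality one has $\|f\|_{L^2_x}^2=\sum_{N\ge 1}\|H_N f\|_{L^2_x}^2$, so the partial sums form a Cauchy sequence converging to $f$. Next, since the symbol of $\Pi_{\theta}$, namely $\tfrac12(I_{4\times 4}+\theta\,\xi_j\gamma^0\gamma^j/|\xi|)$, has uniformly bounded matrix entries, $\Pi_{\theta}$ is a bounded operator on $L^2_x(\mathbb R^3)$.

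Combining these two facts with continuity, apply $\Pi_{\theta}$ to the decomposition of $\phi$:
\begin{equation*}
0=\Pi_{\theta}\phi=\Pi_{\theta}\Bigl(\sum_{N\ge 1}H_N\phi\Bigr)=\sum_{N\ge 1}\Pi_{\theta}H_N\phi,
\end{equation*}
which is exactly the conclusion.

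The only conceptual point worth flagging—and the reason the lemma is not a tautology—is that $\Pi_{\theta}$ and $H_N$ do not commute termwise: the multiplier $\xi_j\gamma^0\gamma^j/|\xi|$ is linear in $\xi/|\xi|$, so on a spherical harmonic of degree $\ell$ it produces contributions of degrees $\ell\pm 1$; thus $\Pi_{\theta}H_N\phi$ is in general nonzero for each individual $N$, and it is only the telescoping in the dyadic sum that recovers the vanishing of $\Pi_{\theta}\phi$. There is no real obstacle here, however—the argument is just a continuity plus completeness argument, and the content of the statement is precisely that this exchange of summation and multiplier is legitimate.
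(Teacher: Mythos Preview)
Your proof is correct and considerably shorter than the paper's. The paper proceeds by explicitly analysing the action of $\Pi_\theta$ on a single term $a_{\ell,m}(r)Y_{\ell,m}$: writing $\partial_j = \frac{x_j}{r}\partial_r + \sum_k x_k\Omega_{jk}$ and invoking recursion relations for associated Legendre polynomials, it shows that $\Pi_\theta$ maps degree $\ell$ into a combination of degrees $\ell-1,\ell,\ell+1$, and then carries out the telescoping sum over $N$ by hand, using that the high-degree tails vanish in $L^2_x$. Your argument bypasses all of this by observing that $\sum_N H_N\phi = \phi$ in $L^2_x$ together with $L^2_x$-boundedness of $\Pi_\theta$, so continuity of a bounded operator already forces the conclusion. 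The paper's route is more laborious but yields finer structural information---precisely which spherical-harmonic degrees $\Pi_\theta H_N\phi$ occupies---which is essentially the mechanism behind the neighbouring statement that $H_N\Pi_\theta H_{N'}=0$ unless $N\approx N'$; your route is the clean one for this particular lemma, and your closing paragraph already correctly isolates the only non-obvious point, namely that $\Pi_\theta$ and $H_N$ do not commute termwise.
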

\begin{proof}
We first observe that for any compactly supported smooth radial $a_{\ell,m}$
$$
\Pi_\theta(a_{\ell, m}Y_{\ell, m}) = \frac12(a_{\ell, m}Y_{\ell ,m}+\theta \gamma^0\gamma^j \partial_j( b_{\ell, m}Y_{\ell, m}),
$$
where $b_{\ell ,m} = |\nabla|^{-1}a_{\ell, m}$.
Using the identity $\partial_j = \frac{x_j}{r} \partial_r + \sum_{k = 1}^3x_k\Omega_{jk}$, one has
$$
\frac{x_j}{r} \partial_r(b_{\ell, m}Y_{\ell, m}) = \partial_rb_{\ell, m}\frac{x_j}{r}Y_{\ell, m}.
$$
In view of the recursion formula of associated Legendre polynomials, $\frac{x_j}{r}Y_{\ell, m}$ is the linear combination of spherical harmonic functions of degree $\ell \pm 1$.
Meanwhile, $\sum_{k = 1}^3x_k\Omega_{jk}Y_{\ell, m}(\frac{x}{r}) = r^{-\ell}\sum_{k = 1}^3x_k\Omega_{jk}(Y_{\ell,m}(x))$ and
$\sum_{k = 1}^3x_k\Omega_{jk}(Y_{\ell,m}(x)) + 3x_jY_{\ell, m}(x)$ is a harmonic polynomial of degree $\ell+1$. Hence we deduce that
$\sum_{k = 1}^3x_k\Omega_{jk}Y_{\ell ,m}$ is a linear combination of spherical harmonic functions of degree $\ell \pm 1$.

Now in general, if $\Pi_{\theta}\phi = 0$ in $L_x^2$, we have
$\langle \Pi_{\theta}\phi, Y_{\ell, m}\rangle_{L^2_\omega(\mathbb S^2)} = 0$ for all $\ell \ge 0, -\ell \le m \le \ell$.
Using the decomposition $\phi = \sum_\ell \phi_\ell$, where $ \phi_\ell = \sum_{m  = -\ell}^\ell a_{\ell, m}Y_{\ell, m}$, from the above argument it follows that for all $\ell \ge 1$
$$
\Pi_\theta(\phi_\ell) = \phi^{\theta, 1}_{\ell-1} + \phi_\ell^{\theta, 2} + \phi_{\ell+1}^{\theta, 3},
$$
where $\phi_j^{\theta, k}$ is the linear combination of functions $c_{j,m'}^{\theta, k}(r)Y_{j,m'}\;\;(-j \le m' \le j)$. Since $\phi_\ell \to 0$ in $L_x^2$ as $\ell \to \infty$ and $\Pi_\theta$ is the bounded operator in $L_x^2$, by orthogonality we see that
$$
\lim_{\ell \to \infty} \phi^{\theta, k}_{\ell} = 0 \;\;\mbox{in}\;\;L_x^2.
$$
Since $\langle \Pi_{\theta}\phi, Y_{\ell, m}\rangle_{L^2_\omega(\mathbb S^2)} = 0$,
$$
\phi^{\pm, 3}_{\ell} + \phi_\ell^{\pm, 2} + \phi_{\ell}^{\pm, 1} = 0
$$
for all $\ell \ge 1$. This implies that
$$
\Pi_{\theta}H_N\phi = \phi_{N-1}^{\theta, 1} + \phi_N^{\theta,2} + \phi_{N+1}^{\theta, 1} + \phi_{2N-2}^{\theta, 3} + \phi_{2N-1}^{\theta, 2} + \phi_{2N}^{\theta, 3}
$$
and hence
$$
\sum_{N\ge 1 }\Pi_\theta H_N\phi = \lim_{N \to \infty}(\phi_{2N-2}^{\theta, 3} + \phi_{2N-1}^{\theta, 2} + \phi_{2N}^{\theta, 3}) = 0\;\;\mbox{in}\;\;L_x^2.
$$	
\end{proof}

\subsection{Adapted function spaces}\label{ftn-sp}
We discuss the basic properties of function spaces of $U^p$ and $V^p$ type. We refer the readers to \cite{haheko,kochtavi} for more details. Let $\mathcal I$ be the set of finite partitions $-\infty=t_0<t_1<\cdots<t_K=\infty$ and let $1\le p<\infty$.
\begin{defn}
A function $a:\mathbb R\rightarrow L^2_x$ is called a $U^p$-atom if there exists a decomposition 
$$
a=\sum_{j=1}^K\chi_{[t_k-1,t_k)}(t)f_{j-1}
$$
 with
$$
\{f_j\}_{j=0}^{K-1}\subset L^2_x,\ \sum_{j=0}^{K-1}\|f_j\|_{L^2_x}^p=1,\ f_0=0.
$$
Furthermore, we define the atomic Banach space 
$$
U^p := \left\{ u=\sum_{j=1}^\infty \lambda_ja_j : a_j \, U^p\textrm{-atom},\ \lambda_j\in\mathbb C \textrm{ such that } \sum_{j=1}^\infty|\lambda_j|<\infty  \right\}
$$
with the induced norm
$$
\|u\|_{U^p} := \inf\left\{ \sum_{j=1}^\infty|\lambda_j| : u=\sum_{j=1}^\infty \lambda_ja_j,\,\lambda_j\in\mathbb C,\, a_j \, U^p\textrm{-atom}   \right\}.
$$
\end{defn}
We list some basic properties of $U^p$ spaces. 
\begin{prop}[Proposition 2.2 of \cite{haheko}]
Let $1\le p<q<\infty$.
\begin{enumerate}
\item $U^p$ is a Banach space.
\item The embeddings $U^p\subset U^q\subset L^\infty(\mathbb R;L^2_x)$ are continuous.
\item For $u\in U^p$, $u$ is right-continuous.
\end{enumerate}
\end{prop}
\noindent We also define the space $U^p_\theta$ to be the set of all $u\in\mathbb R\rightarrow L^2_x$ such that $e^{\theta it|\nabla|}u\in U^p$ with the obvious norm
$
\|u\|_{U^p_\theta} := \|e^{\theta it|\nabla|}u\|_{U^p}.
$
We define the $2$-variation of $v$ to be
$$
|v|_{V^2} = \sup_{ \{t_k\}_{k=0}^K\in\mathcal I } \left( \sum_{k=0}^K\|v(t_k)-v(t_{k-1})\|_{L^2_x}^2 \right)^\frac12
$$
Then the Banach space $V^2$ can be defined to be all right continuous functions $v:\mathbb R\rightarrow L^2_x$ such that the quantity
$$
\|v\|_{V^2} = \|v\|_{L^\infty_tL^2_x} + |v|_{V^2}
$$
is finite. Set $\|u\|_{V^2_\theta}=\|e^{	\theta it|\nabla|}u\|_{V^2}$. We recall basic properties of $V^2_\theta$ space from \cite{candyherr, candyherr1, haheko}.
\begin{lem}[Lemma 2.3 of \cite{cyang}]
Let $2\le p < q <\infty$. The embedding $V^p\subset U^q$ is continuous. In particular, we have $\|u\|_{U^q_\theta}\lesssim \|u\|_{V^p_\theta}$.	
\end{lem}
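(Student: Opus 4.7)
The plan is to reduce to the untwisted statement $V^p \subset U^q$ for $2 \le p < q < \infty$, since $\|u\|_{U^q_\theta} = \|e^{\theta it|\nabla|}u\|_{U^q}$ and $\|u\|_{V^p_\theta} = \|e^{\theta it|\nabla|}u\|_{V^p}$ by definition. To prove the untwisted embedding, the natural strategy is atomic: given $v \in V^p$ with $\|v\|_{V^p} \le 1$, I would construct an explicit decomposition $v = \sum_{k \ge 1}(v_k - v_{k-1})$ into telescoping differences, each of which is (a multiple of) a $U^q$-atom whose coefficients are geometrically summable precisely because $p < q$.

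First I would build partitions adapted to the dyadic levels of the $p$-variation. For each integer $k \ge 0$, set $\tau_0^k = -\infty$ and inductively define $\tau_{n+1}^k$ as the smallest $t > \tau_n^k$ with $\|v(t) - v(\tau_n^k)\|_{L^2_x} \ge 2^{-k}$, stopping at $+\infty$ if no such $t$ exists. Right-continuity of $v \in V^p$ (built into the definition of the space) ensures this greedy stopping time is attained. The $p$-variation bound $\|v\|_{V^p} \le 1$ forces
\[
N_k := \#\{ n : \tau_{n+1}^k < \infty \} \le 2^{kp},
\]
since each successive jump contributes at least $(2^{-k})^p$ to the $p$-variation sum. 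Define the step function $v_k$ by $v_k(t) = v(\tau_n^k)$ for $t \in [\tau_n^k, \tau_{n+1}^k)$; by construction $\|v - v_k\|_{L^\infty_t L^2_x} \le 2^{-k}$, so $v_k \to v$ uniformly, and $v_0 \equiv 0$ since the first nontrivial stopping requires a jump of size $\ge 1$ which does not occur when $\|v\|_{V^p} \le 1$ (up to a harmless additive shift, or by starting the induction at $k$ large enough).

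Next I would analyse the telescoping difference $w_k := v_k - v_{k-1}$. On the common refinement $\{\sigma_j^k\}_j := \{\tau_n^{k-1}\}_n \cup \{\tau_n^k\}_n$, the function $w_k$ is a step function with at most $N_{k-1} + N_k \lesssim 2^{kp}$ pieces, and on each piece satisfies $\|w_k(t)\|_{L^2_x} \le \|v - v_k\|_{L^\infty_t L^2_x} + \|v - v_{k-1}\|_{L^\infty_t L^2_x} \le 2^{-k+2}$. A step function with $M$ pieces each of $L^2_x$-norm at most $A$ is, after dividing by $A M^{1/q}$, a $U^q$-atom by the very definition of atoms. Hence $w_k = c_k a_k$ with $a_k$ a $U^q$-atom and
\[
|c_k| \lesssim 2^{-k} \cdot (2^{kp})^{1/q} = 2^{-k(1 - p/q)}.
\]

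Finally, summing yields $\|v\|_{U^q} \le \sum_{k \ge 1}|c_k| \lesssim \sum_{k \ge 1} 2^{-k(1-p/q)} < \infty$, where convergence uses exactly the hypothesis $p < q$. The main obstacle, and the only genuinely delicate point, is verifying the greedy partition construction: one must check that the stopping times are well defined (right-continuity plus the supremum definition of $\|v\|_{V^p}$), that $v_k \to v$ uniformly (which requires the jumps of $v$ at any single point to be $\le 2^{-k+1}$ after stage $k$), and that the atom normalization absorbs the combinatorial count $2^{kp}$ with the $L^2$-size $2^{-k}$ in the correct way. Once these are handled, the geometric summability is automatic and gives $\|v\|_{U^q} \lesssim \|v\|_{V^p}$, completing the argument.
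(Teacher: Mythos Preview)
The paper does not prove this lemma at all; it merely quotes it as Lemma~2.3 of \cite{cyang} (and the underlying result is originally due to Koch--Tataru, see also \cite{haheko,kochtavi}). Your proposal supplies the standard greedy-partition / telescoping-atom argument, which is exactly the proof one finds in those references, and it is correct. The only minor caveat is the normalization $v_0\equiv 0$: strictly speaking one needs $\lim_{t\to -\infty}v(t)=0$ for elements of $U^q$ (since atoms satisfy $f_0=0$), so the embedding is really into the subspace of $V^p$ with this limit, but you flag this yourself and it is handled by the usual convention in \cite{haheko}.
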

We shall use the following lemma to prove the scattering result.
\begin{lem}[Lemma 7.4 of \cite{candyherr}]\label{v-scatter}
	Let $u\in V^2_\theta$. Then there exists $f\in L^2_x$ such that $\|u(t)-e^{-\theta it|\nabla|}f\|_{L^2_x}\rightarrow0$ as $t\rightarrow\pm\infty$.
\end{lem}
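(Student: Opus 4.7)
The plan is to pull back the free evolution and reduce the scattering claim to the soft fact that a function of finite 2-variation taking values in a Banach space has limits at $\pm\infty$. Set $v(t) = e^{\theta it|\nabla|}u(t)$, so that by definition of $V^2_\theta$ we have $v \in V^2$ with $\|v\|_{V^2} = \|u\|_{V^2_\theta}$, and in particular $v$ is right-continuous, uniformly bounded in $L^2_x$, and has finite 2-variation $|v|_{V^2}$. Once I produce $f_+ \in L^2_x$ with $v(t) \to f_+$ in $L^2_x$ as $t \to +\infty$ (and an analogous $f_-$ for $t \to -\infty$), the conclusion follows immediately from the isometry
\[
\|u(t)-e^{-\theta it|\nabla|}f_\pm\|_{L^2_x} = \|e^{\theta it|\nabla|}u(t)-f_\pm\|_{L^2_x} = \|v(t)-f_\pm\|_{L^2_x}.
\]

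To produce $f_+$, I will show that $v(t)$ is Cauchy in $L^2_x$ along every sequence $t_n \to +\infty$; since $L^2_x$ is complete, this yields convergence to a single limit $f_+$. I argue by contradiction: if $v(t)$ is not Cauchy at $+\infty$, then there exist $\varepsilon>0$ and sequences $s_n, t_n \to +\infty$ with $\|v(t_n)-v(s_n)\|_{L^2_x}\ge \varepsilon$. After passing to a subsequence I may assume $s_1<t_1<s_2<t_2<\cdots$ all lie in $\mathbb R$, and then for each $N$ the finite increasing sequence $s_1<t_1<\cdots<s_N<t_N$ constitutes an admissible partition (completed, if the convention of $\mathcal{I}$ requires it, by adjoining $t_0=-\infty$ and $t_{K}=+\infty$, which only adds non-negative increments controlled by $\|v\|_{L^\infty_t L^2_x}^2 \le \|v\|_{V^2}^2$). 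Summing gives
\[
|v|_{V^2}^2 \;\ge\; \sum_{k=1}^N \|v(t_k)-v(s_k)\|_{L^2_x}^2 \;\ge\; N\varepsilon^2,
\]
so letting $N\to\infty$ contradicts $v\in V^2$. Hence $v$ admits a limit $f_+\in L^2_x$ at $+\infty$; the symmetric argument over sequences tending to $-\infty$ yields $f_-\in L^2_x$.

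This argument is essentially structural and I do not expect a serious obstacle: the only mildly delicate point is bookkeeping the endpoint convention in the definition of $|\cdot|_{V^2}$ when forming the interlaced partition, which is handled trivially by the embedding $V^2 \hookrightarrow L^\infty_t L^2_x$ built into $\|v\|_{V^2}$. Everything else is just completeness of $L^2_x$ together with the identity linking $u$ to its free-propagator pullback $v$.
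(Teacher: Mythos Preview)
Your argument is correct and is the standard proof of this fact: pull back by the free propagator to land in $V^2$, then use finiteness of the $2$-variation to force the Cauchy property at $\pm\infty$. The paper itself does not supply a proof of this lemma; it is quoted verbatim as Lemma 7.4 of \cite{candyherr}, so there is nothing to compare against beyond noting that your argument is exactly the one underlying that reference.
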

Recall the modulation-localisation $C^\theta_d$. The following lemma is on a simple bound in the high-modulation region.
\begin{lem}[Corollary 2.18 of \cite{haheko}]
Let $2\le q\le\infty$. For $d\in2^{\mathbb Z}$ and $\theta \in \{+, -\}$, we have
\begin{align}\label{bdd-high-mod}
\begin{aligned}
\|C^{\theta}_d u\|_{L^q_tL^2_x} \lesssim d^{-\frac1q}\|u\|_{V^2_\theta},
\end{aligned}
\end{align}
\end{lem}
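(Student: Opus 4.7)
The plan is to conjugate away the half-wave propagator, reduce to a purely temporal Fourier multiplier, establish the endpoints $q=2$ and $q=\infty$, and interpolate. Setting $v = e^{\theta it|\nabla|}u$, we have $\|v\|_{V^2}=\|u\|_{V^2_\theta}$ and
\[
C^\theta_d u \;=\; e^{-\theta it|\nabla|}\,\widetilde C_d v, \qquad \widetilde C_d := \rho(-i\partial_t/d),
\]
so since $e^{-\theta it|\nabla|}$ is an isometry on $L^2_x$ it suffices to prove
$\|\widetilde C_d v\|_{L^q_t L^2_x}\lesssim d^{-1/q}\|v\|_{V^2}$.

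For the endpoint $q=\infty$, I would use that $\widetilde C_d$ is convolution in time with $d\,\check\rho(d\,\cdot)$, whose $L^1_t$ norm equals $\|\check\rho\|_{L^1}$ uniformly in $d$. Combined with the continuous embedding $V^2\hookrightarrow L^\infty_t L^2_x$ (contained in the definition of the $V^2$ norm), Minkowski's inequality yields $\|\widetilde C_d v\|_{L^\infty_t L^2_x}\lesssim \|v\|_{V^2}$.

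The core step is the endpoint $q=2$. First I would prove the estimate for $U^2$-atoms and then transfer it to $V^2$. For an atom $a(t)=\sum_{j=1}^K \chi_{[t_{j-1},t_j)}(t)\,f_{j-1}$ with $\sum_j\|f_{j-1}\|_{L^2_x}^2\le 1$, computing the temporal Fourier transform gives
\[
\widehat{a}(\tau,x) \;=\; \sum_{j=1}^K \frac{e^{-i\tau t_{j-1}}-e^{-i\tau t_j}}{i\tau}\,f_{j-1}(x).
\]
Plancherel in time combined with $|\widehat{\rho}(\tau/d)|\lesssim \mathbf 1_{|\tau|\sim d}$ produces $\int_{|\tau|\sim d}\tau^{-2}\|\cdot\|_{L^2_x}^2\,d\tau$, and after expanding the square and handling the cross terms by Cauchy--Schwarz (using that $|e^{-i\tau t_{j-1}}-e^{-i\tau t_j}|\le 2$ and that the number of relevant interactions is controlled by the $\ell^2$ normalization of the $f_{j-1}$), one gets a clean $d^{-1}$ from the $\tau$-integration. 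Summing over the atomic decomposition of a $U^2$ function gives $\|\widetilde C_d u\|_{L^2_t L^2_x}\lesssim d^{-1/2}\|u\|_{U^2}$. To upgrade from $U^2$ to $V^2$, I would invoke the standard Koch--Tataru transference principle: any $v\in V^2$ admits, for each $d$, a decomposition into a $U^2$-piece controlled by $\|v\|_{V^2}$ plus an error that is itself localized to scale $d$ in modulation so that $\widetilde C_d$ annihilates it up to an admissible loss — this is the content of the $V^2\to U^p$ atomic approximation used in \cite{haheko}.

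Finally, a complex interpolation between the $L^2_t L^2_x$ and $L^\infty_t L^2_x$ bounds (both with target space $L^2_x$, so the interpolation is one-dimensional in $q$) gives $\|\widetilde C_d v\|_{L^q_t L^2_x}\lesssim d^{-1/q}\|v\|_{V^2}$ for all $2\le q\le\infty$. The main obstacle is the transfer from the $U^2$-atomic bound to the genuine $V^2$-bound at $q=2$: the naive embedding $V^2\hookrightarrow U^p$ holds only for $p>2$ and loses the sharp $d^{-1/2}$ factor, so one genuinely needs the atomic-approximation argument for $V^2$ at the $U^2$-scale, which is the technical heart of the lemma.
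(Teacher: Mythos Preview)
The paper does not prove this lemma; it is quoted verbatim from \cite{haheko}. Your overall architecture---conjugating out the propagator so that $C^\theta_d$ becomes the purely temporal multiplier $\widetilde C_d=\rho(-i\partial_t/d)$, and interpolating between $q=2$ and $q=\infty$---is correct, and the $q=\infty$ endpoint is fine.

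The $q=2$ step, however, has two real gaps. First, the cross--term bound for a $U^2$-atom does not follow from Cauchy--Schwarz as you describe: expanding
\[
\int_{|\tau|\sim d}\tau^{-2}\Big\|\sum_j (e^{-i\tau t_{j-1}}-e^{-i\tau t_j})f_{j-1}\Big\|_{L^2_x}^2\,d\tau,
\]
each off-diagonal coefficient $\int_{|\tau|\sim d}\tau^{-2}c_j(\tau)\overline{c_k(\tau)}\,d\tau$ is only $O(d^{-1})$, so Cauchy--Schwarz produces $d^{-1}\big(\sum_j\|f_{j-1}\|\big)^2$, which is an $\ell^1$ rather than $\ell^2$ bound and grows with the number of steps. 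Second, the ``transference'' you invoke---splitting $v\in V^2$ into a $U^2$-piece of comparable norm plus a remainder annihilated by $\widetilde C_d$---is not a known lemma; the atomic approximation in \cite{haheko} carries a logarithmic loss in the $U^2$ norm (this is exactly why $V^2\hookrightarrow U^p$ only for $p>2$), and that loss would spoil the sharp $d^{-1/2}$.

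The clean argument bypasses $U^2$ entirely. Since $\rho$ is supported away from the origin, the kernel $K_d(s)=d\,\check\rho(ds)$ has mean zero, hence
\[
\widetilde C_d v(t)=\int K_d(s)\big[v(t-s)-v(t)\big]\,ds,
\]
and by Minkowski $\|\widetilde C_d v\|_{L^2_tL^2_x}\le\int|K_d(s)|\,\|v(\cdot-s)-v(\cdot)\|_{L^2_tL^2_x}\,ds$. The key elementary inequality
\[
\int_{\mathbb R}\|v(t+h)-v(t)\|_{L^2_x}^2\,dt\;\le\;|h|\,|v|_{V^2}^2
\]
follows by writing $t=mh+r$ with $r\in[0,|h|)$ and noting that for each fixed $r$ the points $\{mh+r\}_m$ form a partition, so $\sum_m\|v((m+1)h+r)-v(mh+r)\|^2\le|v|_{V^2}^2$; integrating in $r$ gives the factor $|h|$. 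Feeding this in yields $\|\widetilde C_d v\|_{L^2_tL^2_x}\lesssim|v|_{V^2}\int|K_d(s)|\,|s|^{1/2}\,ds\approx d^{-1/2}|v|_{V^2}$, and interpolation with your $q=\infty$ bound finishes the proof.
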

\begin{lem}[Lemma 7.3. of \cite{candyherr}]\label{energy-ineq}
Let $F\in L^\infty_tL^2_x$, and suppose that
$$
\sup_{\|P_\lambda H_Nv\|_{V^2_\theta}\lesssim1}\left|\int_{\mathbb R} \langle P_\lambda H_Nv(t),F(t)\rangle_{L^2_x}\,dt \right| <\infty.
$$	
If $u\in C(\mathbb R,L^2_x)$ satisfies $(-i\partial_t+\theta|\nabla|) u=F$, then $P_\lambda H_Nu\in V^2_\theta$ and we have the bound
\begin{align}\label{v-dual}
\|P_\lambda H_Nu\|_{V^2_\theta} \lesssim \|P_\lambda H_Nu(0)\|_{L^2_x} + \sup_{\|P_\lambda H_Nv\|_{V^2_\theta}\lesssim1}\left|\int_{\mathbb R} \langle P_\lambda H_Nv(t),F(t)\rangle_{L^2_x}\,dt \right|.
\end{align}
\end{lem}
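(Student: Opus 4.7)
The approach is a standard duality argument in the $U^2$–$V^2$ framework; Lemma \ref{energy-ineq} is the weighted, projected version of the classical energy inequality which ultimately rests on the duality $(U^2)^\ast \cong V^2$ of Hadac–Herr–Koch.

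First I would pass to the wave profile $\tilde{u}(t) = e^{\theta it|\nabla|}u(t)$ and $\tilde{F}(t) = e^{\theta it|\nabla|}F(t)$, so that the equation becomes the flat one $-i\partial_t \tilde{u} = \tilde{F}$. Since $\|\,\cdot\,\|_{V^2_\theta} = \|e^{\theta it|\nabla|}\,\cdot\,\|_{V^2}$, since $e^{\theta it|\nabla|}$ commutes with $P_\lambda H_N$ (by Lemma \ref{sph-ortho}), and since the change of variable $v \mapsto \tilde v := e^{\theta it|\nabla|}v$ identifies the relevant pairing as $\int\langle P_\lambda H_N v, F\rangle_{L^2_x}\,dt = \int\langle P_\lambda H_N \tilde v, \tilde F\rangle_{L^2_x}\,dt$, it suffices to prove the flat analogue: for $w := P_\lambda H_N \tilde{u}$ and $G := P_\lambda H_N \tilde{F}$ with $-i\partial_t w = G$,
$$
\|w\|_{V^2} \,\lesssim\, \|w(0)\|_{L^2_x} + \mathcal{B}, \qquad \mathcal{B} := \sup_{\|P_\lambda H_N\tilde v\|_{V^2}\lesssim1}\bigl|\textstyle\int_{\mathbb R} \langle P_\lambda H_N \tilde v,\tilde F\rangle_{L^2_x}\,dt\bigr|.
$$

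I would then split $\|w\|_{V^2} = \|w\|_{L^\infty_tL^2_x} + |w|_{V^2}$ and control each piece by duality against a step function that qualifies as a $U^2$-atom. For the $L^\infty_tL^2_x$-part, integrating in time gives $w(t) - w(0) = i\int_0^t G(s)\,ds$, and reducing $\|w(t)-w(0)\|_{L^2_x}$ by $L^2$-duality to a supremum over $\phi$ with $\|\phi\|_{L^2_x}\le 1$ produces the pairing $|\int\langle \chi_{[0,t]}(s)\phi, G(s)\rangle\,ds|$; here $\tilde v(s):=\chi_{[0,t]}(s)\phi$ is a $U^2$-atom, hence $\|\tilde v\|_{V^2}\lesssim 1$ and the sup is $\lesssim\mathcal B$. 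For the variation part, fix a partition $t_0<\cdots<t_K$ and vectors $\phi_1,\dots,\phi_K\in L^2_x$ with $\sum\|\phi_k\|_{L^2_x}^2\le 1$, and compute
$$
\sum_{k=1}^K\langle w(t_k)-w(t_{k-1}),\phi_k\rangle_{L^2_x} \,=\, i\int_{\mathbb R}\langle \tilde v(s),G(s)\rangle_{L^2_x}\,ds, \qquad \tilde v(s):=\sum_k\chi_{[t_{k-1},t_k)}(s)\phi_k,
$$
so that $\tilde v$ is (a bounded multiple of) a $U^2$-atom and the right-hand side is $\lesssim\mathcal B$. Taking the supremum over partitions and over $\{\phi_k\}$ recovers $|w|_{V^2}\lesssim\mathcal B$, and adding the two pieces yields \eqref{v-dual}.

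The only delicate bookkeeping is verifying that the step functions appearing above really qualify as $U^2$-atoms with the expected bound — one must account for the definitional requirement $f_0=0$ and the possibility of a nonzero value at $-\infty$, which is handled by subtracting a constant (for which $V^2$ is invariant) and making the partition one-sided. This is entirely standard, and if one prefers one may skip the atomic computation altogether and invoke directly the $(U^2)^\ast\cong V^2$ duality of Hadac–Herr–Koch, which encodes precisely the statement that $\|w\|_{V^2}$ is realized as a supremum of integrated pairings of the above form. Apart from this, the argument is pure bookkeeping since the projections $P_\lambda H_N$ are self-adjoint and commute with the flat wave propagator.
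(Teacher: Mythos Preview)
The paper does not give its own proof of this lemma: it is quoted verbatim as Lemma 7.3 of \cite{candyherr} and used as a black box, so there is nothing to compare against. Your proposal is the standard and correct route --- conjugate by the free propagator to reduce to $-i\partial_t w=G$, then realize $\|w\|_{L^\infty_tL^2_x}$ and $|w|_{V^2}$ by pairing against $U^2$-atoms and invoke the $U^2$--$V^2$ duality of Hadac--Herr--Koch; the self-adjointness of $P_\lambda H_N$ and its commutation with $e^{\theta it|\nabla|}$ (Lemma \ref{sph-ortho}) handle the bookkeeping exactly as you say. One small remark: when you pass from $\int\langle \tilde v,G\rangle\,ds$ to the form of the supremum in the statement, you are implicitly using that $P_\lambda H_N$ is bounded on $V^2$ (so that $\|P_\lambda H_N\tilde v\|_{V^2}\lesssim\|\tilde v\|_{U^2}\le 1$ for your atoms), which is immediate from its $L^2$-boundedness but worth stating explicitly.
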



We define the Banach space associated with the homogeneous Sobolev space to be the set
$$
 F^{s,\sigma}_\theta = \left\{ u\in C(\mathbb R;\langle\Omega\rangle^{-\sigma}\dot H^s): \|u\|_{ F^{s,\sigma}_\theta}<\infty \right\},
$$
where the norm is defined by
$$
\|u\|_{ F^{s,\sigma}_\theta} = \bigg( \sum_{\lambda\in2^{\mathbb Z}}\sum_{N\ge1}\lambda^{2s}N^{2\sigma}\|P_\lambda H_Nu\|_{V^2_{\theta}}^2 \bigg)^\frac12.
$$

\subsection{Auxiliary estimates}
To reveal null form in the nonlinearity of the system \eqref{dirac-decom}, we write
\begin{align}\label{dec-di}
\begin{aligned}
(\Pi_{\theta_1}\phi)^\dagger\gamma^0\Pi_{\theta_2}\varphi = &[(\Pi_{\theta_1}-\Pi_{\theta_1}(x))\phi]^\dagger\gamma^0\Pi_{\theta_2}\varphi+(\Pi_{\theta_1}\phi)^\dagger\gamma^0(\Pi_{\theta_2}-\Pi_{\theta_2}(y))\varphi \\
&\qquad\qquad+\phi^\dagger\Pi_{\theta_1}(x)\gamma^0\Pi_{\theta_2}(y)\varphi,
\end{aligned}
\end{align}
for any $x,y\in\mathbb R^3$. Then we have the following null-form-type bound:
\begin{align}\label{dirac-null}
|\Pi_{\theta_1}(\xi)\gamma^0\Pi_{\theta_2}(\eta)| \lesssim \angle(\theta_1\xi,\theta_2\eta).
\end{align}
To exploit the null form for the first and second terms of \eqref{dec-di}, we use the following lemma:
\begin{lem}[Lemma 8.1. of \cite{candyherr}]\label{lem-null}
Let $1<r<\infty$. If $\lambda\ge1$, $\alpha\gtrsim\lambda^{-1},\ \kappa\in\mathcal C_\alpha$, then
$$
\|(\Pi_\theta-\Pi_{\theta}(\lambda\omega(\kappa)))R_\kappa P_\lambda f\|_{L^r_x} \lesssim \alpha\|R_\kappa P_\lambda f\|_{L^r_x}.
$$	
\end{lem}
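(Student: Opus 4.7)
The plan is to realise the difference-of-projections operator as convolution with a kernel whose $L^1_x$ norm is of order $\alpha$; Young's inequality then delivers the claimed $L^r$ bound for every $1\le r\le\infty$. Using the explicit form of the projections,
\[
\Pi_\theta(\xi)-\Pi_\theta(\lambda\omega(\kappa)) = \tfrac{\theta}{2}\gamma^0\gamma^j\bigl(\tfrac{\xi_j}{|\xi|}-\omega(\kappa)_j\bigr),
\]
and since $\gamma^0\gamma^j$ is a constant matrix, it suffices to control the scalar Fourier multiplier with symbol $n(\xi)=\tfrac{\xi}{|\xi|}-\omega(\kappa)$ applied to $R_\kappa P_\lambda f$.

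The key step is to replace $n$ by $\tilde n = n\chi$, where $\chi$ is a smooth bump adapted to a slightly enlarged tube $\tilde T$ of dimensions $\lambda\times\lambda\alpha\times\lambda\alpha$ centred at $\lambda\omega(\kappa)$ and taking the value $1$ on the Fourier support of $R_\kappa P_\lambda$. The hypothesis $\alpha\gtrsim\lambda^{-1}$ ensures that $\lambda\alpha\gtrsim 1$, so the tube is truly rectangular. A first-order Taylor expansion at $\lambda\omega(\kappa)$ gives $|\tilde n|\lesssim\alpha$ on $\tilde T$. Since $n$ is $0$-homogeneous its radial derivative vanishes while the angular derivatives are of size $|\xi|^{-1}\sim\lambda^{-1}$, and combining with the natural derivative estimates on $\chi$ one finds the anisotropic bound
\[
|\partial_\xi^a \tilde n(\xi)|\lesssim \alpha\,\lambda^{-a_1}(\lambda\alpha)^{-a_2-a_3},
\]
where $a_1$ and $a_2,a_3$ count derivatives along the radial direction $e_1=\omega(\kappa)$ and the two tangential directions $e_2,e_3$ respectively. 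Iterated integration by parts along each principal axis in the integral defining $\tilde n^\vee$ then yields the pointwise estimate
\[
|\tilde n^\vee(x)|\lesssim \alpha\,\lambda(\lambda\alpha)^2\,(1+\lambda|e_1\cdot x|)^{-N}\prod_{i=2,3}(1+\lambda\alpha|e_i\cdot x|)^{-N}
\]
for any $N$, and integrating in $x$ the three decaying factors contribute the reciprocal volume $\lambda^{-1}(\lambda\alpha)^{-2}$, so $\|\tilde n^\vee\|_{L^1_x}\lesssim\alpha$. Since the multiplier with symbol $n$ acting on $R_\kappa P_\lambda f$ coincides with convolution by $\tilde n^\vee$, Young's inequality closes the argument.

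The main obstacle is organising the anisotropic integration by parts cleanly, which requires choosing a coordinate frame adapted to $\lambda\omega(\kappa)$ and exploiting both the homogeneity of $n$ (to kill the leading radial contribution) and the cap--annulus support structure of $R_\kappa P_\lambda$. Once this is in place the proof is essentially routine and in fact yields the bound for every $1\le r\le\infty$; the restriction $1<r<\infty$ in the statement is not really felt at the level of the kernel estimate.
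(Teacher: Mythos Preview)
The paper does not supply its own proof of this lemma; it is simply quoted from \cite{candyherr} (Lemma~8.1 there). So there is no in-paper argument to compare against. Your kernel approach is the standard one for this type of statement and is essentially what one finds in the cited reference: localise the symbol to the $\lambda\times\lambda\alpha\times\lambda\alpha$ tube, verify the anisotropic derivative bounds, integrate by parts to get an integrable kernel of $L^1$ mass $O(\alpha)$, and finish by Young. The argument is correct.

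One small point of exposition: the sentence ``$n$ is $0$-homogeneous, its radial derivative vanishes'' is literally a statement about the Euler operator $\xi\cdot\nabla n=0$, not about the fixed Cartesian derivative $\partial_{e_1}$ that actually enters your integration by parts. On the tube these differ by an $O(\alpha)$ tangential correction, which is exactly what produces the crucial extra factor $\alpha$ in $|\partial_{e_1}^{a_1}\tilde n|\lesssim\alpha\lambda^{-a_1}$. Concretely, from $\xi_1\partial_1 n=-\xi_2\partial_2 n-\xi_3\partial_3 n$ and $|\xi_j/\xi_1|\lesssim\alpha$ for $j=2,3$ one gets $|\partial_1 n|\lesssim\alpha\lambda^{-1}$, and an induction using the differentiated Euler relation gives $|\partial_1^{k}n|\lesssim\alpha\lambda^{-k}$ for all $k\ge1$; combined with the crude bound $|\partial^{b}n|\lesssim\lambda^{-|b|}$ (which already suffices whenever $b_2+b_3\ge1$) this justifies the full anisotropic estimate you claim. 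It would be worth making this step explicit. Your closing remark that the kernel argument actually yields the endpoint cases $r=1,\infty$ as well is correct; the restriction $1<r<\infty$ in the statement is inherited from the general multiplier framework in \cite{candyherr} and is not needed here.
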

Now we introduce the classical Strichartz estimates. It is well-known that the homogeneous solutions of the wave equations satisfy the space-time estimates as $\|e^{-\theta it|\nabla|}P_\lambda f\|_{L^q_tL^r_x}\lesssim \lambda^\frac2q\|P_\lambda f\|_{L^2_x}$, provided that $\frac1q+\frac1r=\frac12$ and $2<q\le\infty$. See \cite{choozxia}. A simple use of the linear estimates give enough bound for the proof of our main theorem in the Low$\times$High $\Rightarrow$ High interactions. This is obviously because the Fourier multiplier $\widehat V_b(\xi)$ plays a role as the kernel, which yields good decay. However, in the High$\times$High $\Rightarrow$ Low interactions, the above linear estimate is not enough. Even worse, the Fourier multiplier $\widehat V_b(\xi)$ becomes a serious singularity especially when $b=0$. To overcome such a problem we shall use the refinement of the classical Strichartz estimates via smaller cube localisations. The following refined estimates can be found in \cite[Lemma 3.1]{behe}. We also refer the readers to \cite{klaitataru}, which concerns the refined estimates in general dimensional setting and decay estimates.
\begin{lem}\label{stri-cube}
Let $0<\mu\ll\lambda$. Suppose that $(q,r)$ satisfies $\frac1q+\frac1r=\frac12$. Then 
\begin{align}
\|e^{\theta it|\nabla|}P_{\mathtt q}P_\lambda f\|_{L^q_tL^r_x} & \lesssim (\mu\lambda)^\frac1{ q} \|P_\mathtt qP_{\lambda} f\|_{L^2_x}	
\end{align}
\end{lem}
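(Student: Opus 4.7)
The plan is to establish the lemma by reducing to a cube-localized dispersive estimate of the form
\[
\bigl\|e^{it|\nabla|}P_\mathtt q P_\lambda f\bigr\|_{L^\infty_x} \lesssim \min\!\Bigl\{\mu^3,\ \tfrac{\mu\lambda}{|t|}\Bigr\}\|P_\mathtt q P_\lambda f\|_{L^1_x},
\]
and then invoking the Keel--Tao machinery. Combined with the trivial $L^2$-preservation $\|e^{it|\nabla|}P_\mathtt q P_\lambda f\|_{L^2_x} = \|P_\mathtt q P_\lambda f\|_{L^2_x}$, the $|t|^{-1}$-decay with constant $C = \mu\lambda$ produces, after the standard time-rescaling $s = t/(\mu\lambda)$ that normalizes the dispersive constant to $1$, a Strichartz estimate carrying the extra factor $C^{1/q} = (\mu\lambda)^{1/q}$ for every admissible pair $1/q + 1/r = 1/2$, $(q,r) \neq (2,\infty)$. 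This is precisely the bound claimed.

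Next I would prove the dispersive estimate by stationary phase. Let $\xi_0$ be the center of $\mathtt q$ and $\omega := \xi_0/|\xi_0|$, and write $\eta := \xi - \xi_0 = \eta_\parallel\omega + \eta_\perp$. Taylor expansion of the symbol on $\mathtt q$ yields
\[
|\xi| = |\xi_0| + \omega\cdot\eta + \tfrac{1}{2|\xi_0|}|\eta_\perp|^2 + O(\mu^3/\lambda^2).
\]
The Hessian of $t|\xi|$ in $\eta$ at $\xi_0$ has two eigenvalues of order $t/\lambda$ (the two nonzero principal curvatures of the cone, in the transverse plane $\omega^\perp$) and one vanishing eigenvalue along $\omega$. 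A two-dimensional stationary-phase argument in the $\eta_\perp$-variables yields decay $\lambda/|t|$, while the flat $\eta_\parallel$-direction contributes only the cube length $\mu$; this produces the bound $\mu\cdot(\lambda/|t|) = \mu\lambda/|t|$ in the dispersive regime $|t| \gtrsim \lambda/\mu^2$, and the trivial volume bound $|\mathtt q| = \mu^3$ handles the short-time regime $|t| \lesssim \lambda/\mu^2$.

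The main obstacle will be rigorously controlling the cubic remainder $O(\mu^3/\lambda^2)$ in the Taylor expansion, which contributes $O(\mu^3|t|/\lambda^2)$ to the phase and is therefore unbounded globally in time. I would handle this via a dyadic decomposition of the time variable combined with a standard symbol-class stationary-phase argument on each dyadic piece, which is essentially the content of \cite[Lemma 3.1]{behe} (with general-dimensional versions in \cite{klaitataru}). Once the dispersive bound is verified, the passage to Strichartz via Keel--Tao is routine and yields the full non-endpoint admissible range $q \in (2,\infty]$ with the claimed constant $(\mu\lambda)^{1/q}$.
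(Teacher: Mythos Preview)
Your strategy is correct and matches the paper at the architectural level: both reduce the Strichartz bound via $TT^*$ (equivalently Keel--Tao) to the pointwise kernel estimate $|K_{\mathtt q,\lambda}(t,x)|\lesssim\min\{\mu^3,\mu\lambda/|t|\}$, which is precisely the paper's $\mu^3(1+\mu^2\lambda^{-1}|t|)^{-1}$, with the volume bound $\mu^3$ covering the short-time regime $|t|\lesssim\lambda/\mu^2$. The implementations of the kernel bound, however, differ. The paper rescales to $\lambda=1$, replaces the cube cutoff by a thickened spherical cap of radius $\mu/\lambda$, passes to spherical coordinates $(r,\omega,\varphi)$, and performs a single integration by parts in the polar angle $\omega$; the conic curvature is then encoded exactly by the $\sin\omega$ Jacobian, and no Taylor remainder ever appears. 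Your route---Taylor-expand $|\xi|$ about the cube centre, isolate the two transverse variables $\eta_\perp$ where the Hessian has eigenvalues of order $t/\lambda$, and run two-dimensional stationary phase while integrating trivially over the flat $\eta_\parallel$-direction---is also valid and makes the curvature mechanism more explicit, but, as you correctly flag, it leaves a cubic remainder contributing $O(\mu^3|t|/\lambda^2)$ to the phase, which is unbounded in $t$ and must be absorbed by a secondary dyadic-in-time or symbol-class argument. The paper's spherical-coordinate route buys a shorter, remainder-free proof; yours buys conceptual clarity about the source of the $\lambda/|t|$ decay and ports more readily to other dispersion relations.
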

\begin{proof}
The proof follows from Lemma 3.1 of 	\cite{behe}. Indeed, we let $T=P_{\mathtt q}P_{\lambda}e^{\theta it|\nabla|}$. The required estimate can be obtained by the standard $TT^*$ argument. In fact, the operator $TT^*$ is a space-time convolution operator with kernel
$$
K_{\mathtt q,\lambda}(t,x) = \int_{\mathbb R^3}e^{+\theta it|\xi|+ix\cdot\xi}\rho_\lambda^2(\xi)\rho_{\mathtt q}(\xi)\,d\xi.
$$
Then it is enough to show
$$
\|TT^*\|_{L^{q'}_tL^{r'}_x\rightarrow L^q_tL^r_x} \lesssim (\mu\lambda)^\frac2q,
$$
where $\frac1q+\frac1{q'}=1$ and $\frac1r+\frac1{r'}=1$.
In view of complex interpolation and Young's inequality and Hardy-Littlewood-Sobolev inequality, the above estimate is reduced to the following kernel bound:
$$
|K_{\mathtt q,\lambda}(t,x)| \lesssim \mu^3(1+\mu^2\lambda^{-1}|t|)^{-1}.
$$
The scaling argument gives
\begin{align*}
K_{\mathtt q,\lambda}(\lambda t,\lambda x) &= \int_{\mathbb R^3}e^{+\theta it|\xi|+ix\cdot\xi} \rho_1^2(\xi)\rho^2_{\mathtt q}(\lambda^{-1}\xi)\,d\xi \\
& := \mathcal K(s,y).	
\end{align*}
Then the remaining task is to prove
$$
|\mathcal K(s,y)| \lesssim (\mu\lambda^{-1})^3\left(1+(\mu\lambda^{-1})^2|s|\right)^{-1}.
$$
For $|s|\lesssim (\mu\lambda^{-1})^{-2}$, the bound is obvious, because the volume measure of the support of the integrand is $(\mu\lambda^{-1})^3$. If $|s|\gg (\mu\lambda^{-1})^{-2}$, then we may replace the cut-off $\rho_1^2(\xi)\rho_{\mathtt q}(\lambda^{-1}\xi)$ by a smooth cut-off $\zeta$ with respect to a thickened spherical cap of size $\mu\lambda^{-1}$. Now we let $\widetilde{\mathcal K}$ denote the corresponding kernel. We further assume that $y=(0,0,|y|)$ by rotation. By the use of spherical coordinates we write
$$
\widetilde{\mathcal K}(s,y) = \int_0^\infty\int_0^{2\pi}\int_0^\pi e^{i(|y|r\cos\omega+sr)}\zeta(\omega,\varphi,r)\sin\omega r^2 \,d\omega d\varphi dr.
$$
We may choose $\zeta(\omega,\varphi,r)=\zeta_1(\omega)\zeta_2(\varphi)\zeta_3(r)$. The stationary point of the phase of the oscillatory integral occurs only if $|y|\approx|s|$ and the cap is centered near the north pole or the south pole. Thus it suffices to consider the case when the cap is localised near the north pole, since the remaining cases yield similar (when localised near the south pole,) or even better bound by oscillation. Now we assume that $|\zeta'_1|\lesssim (\mu\lambda^{-1})^{-1}$, $\zeta_1$ is supported in an interval of length $\lesssim\mu\lambda^{-1}$ in $[0,\pi)$, and $\zeta_3$ is supported in an interval of length $\lesssim \mu\lambda^{-1}$ in $(\frac12,2)$, with $|\zeta_3'|\lesssim(\mu\lambda^{-1})^{-1}$. Then the integration by parts with respect to $\omega$ yields
\begin{align*}
\widetilde{\mathcal K}(s,y) &=\frac{i\zeta_1(0)}{|y|}\int_0^\infty\int_0^{2\pi} e^{i(|y|r+sr)}\zeta_2(\varphi)\zeta_3(r)r\,d\varphi dr \\
&\qquad - \frac{i}{|y|}\int_0^\infty\int_0^{2\pi}\int_0^\pi e^{i(|y|r\cos\omega+sr)}\zeta_1'(\omega)d\omega \zeta_2(\varphi)\zeta_3(r)r\,d\varphi dr.	
\end{align*}
Then the assumptions on $\zeta_1$ and $\zeta_3$ give
$$
|\widetilde{\mathcal K}(s,y)|\lesssim (\mu\lambda^{-1})|y|^{-1},
$$
which gives the required estimates.
\end{proof}
After the use of the refined estimates, we need to deal with the square sum to recover the $V^2_\theta$-norm, which causes a certain loss in our estimates. The following lemma says that such a loss is not harmful.
\begin{lem}[Lemma 8.6. of \cite{candyherr}]\label{square-sum}
Let $\{P_j\}_{j\in\mathcal J}$ and $\{M_j\}_{j\in\mathcal J}$ be a collection of spatial Fourier multipliers. Suppose that the symbols of $P_j$ have finite overlap, and
$$
\|M_jP_jf\|_{L^2_x} \lesssim \delta \|P_jf\|_{L^2_x}
$$	
for some $\delta>0$. Let $q>2,\ r\ge2$. Suppose that there exists $A>0$ such that for every $j$ we have the bound
$$
\|e^{-\theta it|\nabla|}P_jf\|_{L^q_tL^r_x} \le A\|P_jf\|_{L^2_x}.
$$
Then for every $\epsilon>0$, we have
$$
\left(\sum_{j\in\mathcal J}\|M_jP_jv\|_{L^q_tL^r_x}^2\right)^\frac12 \lesssim \delta|\mathcal J|^\epsilon A\|v\|_{V^2_\theta}.
$$
Here $|\mathcal J|$ is the cardinal number of the set $\mathcal J$.
\end{lem}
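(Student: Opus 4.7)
The plan is to reduce the question to a single $U^q_\theta$-atom, apply the Strichartz hypothesis piecewise in time, and then carry out a careful Minkowski interchange together with the finite-overlap property of $\{P_j\}$. The embedding $V^2_\theta \hookrightarrow U^q_\theta$ (valid for every $q>2$, cited earlier as Lemma 2.3 of \cite{cyang}) will reduce matters from $v \in V^2_\theta$ to $u \in U^q_\theta$, and the atomic definition of $U^q_\theta$ will reduce matters further to a single atom. Because $M_j$ and $P_j$ are both spatial Fourier multipliers and the Fourier support of $M_jP_jf$ already lies inside $\mathrm{supp}\,\widehat{P_j}$, the Strichartz hypothesis can be applied directly to $M_jP_jf$, yielding
\begin{equation*}
\|e^{-\theta it|\nabla|}M_jP_jf\|_{L^q_tL^r_x}\le A\|M_jP_jf\|_{L^2_x}\lesssim \delta A\|P_jf\|_{L^2_x}
\end{equation*}
for every $j$. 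This is the single ``fixed-$j$'' estimate that will be combined over $j$ and over the atomic pieces.

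Next, consider a generic $U^q_\theta$-atom $u(t)=\sum_{k=1}^{K}\chi_{[t_{k-1},t_k)}(t)\,e^{-\theta it|\nabla|}f_{k-1}$ with $\sum_{k}\|f_{k-1}\|_{L^2}^{q}=1$. Since the intervals $[t_{k-1},t_k)$ are disjoint in time, the $L^q_t$-norm decouples and the bound above gives
\begin{equation*}
\|M_jP_ju\|_{L^q_tL^r_x}^{q}\le \sum_{k}\|e^{-\theta it|\nabla|}M_jP_jf_{k-1}\|_{L^q_tL^r_x}^{q}\lesssim (\delta A)^{q}\sum_{k}\|P_jf_{k-1}\|_{L^2}^{q}.
\end{equation*}
Taking square-root and then $\ell^2$-sum over $j$, one arrives at an expression of the form $\bigl(\sum_j(\sum_k a_{jk}^{q})^{2/q}\bigr)^{1/2}$ with $a_{jk}=\|P_jf_{k-1}\|_{L^2}$. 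Because $q\ge2$, Minkowski's inequality allows the interchange
\begin{equation*}
\Big(\sum_j\big(\sum_k a_{jk}^{q}\big)^{2/q}\Big)^{1/2}\le \Big(\sum_k\big(\sum_j a_{jk}^{2}\big)^{q/2}\Big)^{1/q},
\end{equation*}
after which the finite-overlap hypothesis yields $\sum_j a_{jk}^{2}\lesssim \|f_{k-1}\|_{L^2}^{2}$ uniformly in $k$. The atom normalization then gives $\bigl(\sum_j\|M_jP_ju\|_{L^q_tL^r_x}^{2}\bigr)^{1/2}\lesssim \delta A$, which is the atom-level estimate.

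The remaining step is purely formal: by the very definition of $U^q_\theta$, any $u\in U^q_\theta$ decomposes as $u=\sum_n \lambda_n a_n$ with $\sum_n|\lambda_n|\approx \|u\|_{U^q_\theta}$, so linearity promotes the atom estimate to $\bigl(\sum_j\|M_jP_ju\|_{L^q_tL^r_x}^{2}\bigr)^{1/2}\lesssim \delta A\,\|u\|_{U^q_\theta}$, and the embedding $\|u\|_{U^q_\theta}\lesssim \|u\|_{V^2_\theta}$ completes the proof. I expect the only mildly delicate point to be arranging the exponents so that Minkowski goes in the useful direction and the finite overlap lands on the $\ell^2$-sum (rather than on a forbidden $\ell^q$-sum); this is the one place where the assumption $q\ge 2$ is actually used. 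The factor $|\mathcal J|^{\epsilon}$ in the statement is a harmless overestimate that this argument does not require, but if one prefers to avoid Minkowski one can instead insert a crude H\"older in $j$ and absorb the resulting $|\mathcal J|^{1/2-1/q}$ into $|\mathcal J|^{\epsilon}$ for any desired $\epsilon>0$.
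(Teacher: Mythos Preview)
The paper does not prove this lemma; it is quoted from \cite{candyherr} without proof. Your argument, however, has a genuine gap: the Minkowski step is applied in the wrong direction. For $q\ge2$ the generalised Minkowski inequality gives
\[
\Big(\sum_{j}\big(\sum_{k} a_{jk}^{q}\big)^{2/q}\Big)^{1/2}\ \ge\ \Big(\sum_{k}\big(\sum_{j} a_{jk}^{2}\big)^{q/2}\Big)^{1/q},
\]
not the reverse (the mixed norm with the \emph{smaller} outer exponent is the larger one; take $a_{jk}=\delta_{jk}$ on $\{1,\dots,K\}$ to see the left side is $K^{1/2}$ and the right side is $1$). Hence for a $U^q_\theta$-atom there is an unavoidable loss, and your claim that the factor $|\mathcal J|^{\epsilon}$ is ``a harmless overestimate that this argument does not require'' is incorrect. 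Your fallback H\"older-in-$j$ suggestion is close but also fails as written: applied to a $U^q_\theta$-atom it produces the fixed power $|\mathcal J|^{1/2-1/q}$, which cannot be absorbed into $|\mathcal J|^{\epsilon}$ when $\epsilon<\tfrac12-\tfrac1q$, since $q$ is prescribed by the hypotheses.

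The fix is to reduce not to $U^q_\theta$-atoms but to $U^p_\theta$-atoms for some $2<p\le q$ chosen depending on $\epsilon$; the embedding $V^2_\theta\hookrightarrow U^p_\theta$ holds for every $p>2$. For such an atom one has $\sum_k\|f_{k-1}\|_{L^2}^{p}=1$. With $a_{jk}=\|P_jf_{k-1}\|_{L^2}$, first use $\ell^p_k\hookrightarrow\ell^q_k$ to get $(\sum_k a_{jk}^{q})^{1/q}\le(\sum_k a_{jk}^{p})^{1/p}$, then H\"older in $j$ together with
\[
\sum_{j}a_{jk}^{p}\le\big(\sup_{j}a_{jk}\big)^{p-2}\sum_{j}a_{jk}^{2}\ \lesssim\ \|f_{k-1}\|_{L^2}^{p}
\]
(the finite-overlap hypothesis and the trivial bound $\|P_jf\|_{L^2}\lesssim\|f\|_{L^2}$) yield
\[
\Big(\sum_{j}\|M_jP_ju\|_{L^q_tL^r_x}^{2}\Big)^{1/2}\ \lesssim\ \delta A\,|\mathcal J|^{1/2-1/p}.
\]
Choosing $p>2$ with $\tfrac12-\tfrac1p\le\epsilon$ gives the stated bound. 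In other words, the $|\mathcal J|^{\epsilon}$ loss is exactly the price of the strict inclusion $V^2\subsetneq U^2$ and cannot be removed by the route you propose.
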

As a direct application of Lemma \ref{square-sum}, we shall often use the following: for $\mu\le\lambda$ and $\alpha\gtrsim\lambda^{-1}$,
$$
\bigg( \sum_{\mathtt q\in\mathcal Q_\mu}\sum_{\kappa\in\mathcal C_\alpha}\|P_\mathtt qR_\kappa P_\lambda H_N u\|_{L^4_tL^4_x}^2 \bigg)^\frac12 \lesssim (\mu\lambda)^\frac14 \left(\frac\mu\lambda\right)^{-\epsilon}\alpha^{-\epsilon}\|P_\lambda H_N u\|_{V^2_\theta}.
$$
As we are concerned with the equations \eqref{main-hartree} with $\Gamma=\gamma^0$, we cannot exploit the null structure anymore. However, in the sprit of \cite{ster}, the use of angular momentum operator substitutes for the role of null structures. Indeed, by spending an additional angular regularity one can enjoy improved space-time estimates as follows. We also refer to \cite{choslee}, which concerns improved space-time estimates for several differential operators.
\begin{prop}[Proposition 3.4 of \cite{ster}]\label{stri-ang}
For $\frac{1}{10}\ge\eta>0$, let $q_\eta=\frac{2}{1-3\eta}$. We have the improved Strichartz estimates by imposing angular regularity as follow:
\begin{align}
\|e^{\theta it|\nabla|}P_\lambda H_N f\|_{L^{q_\eta}_tL^{4}_x} \lesssim \lambda^{\frac34-\frac1{q_\eta}}N^{\frac12+\eta}\|	P_\lambda H_Nf\|_{L^2_x}.
\end{align}
\end{prop}
However, it is easily seen that the angular regularity $\langle\Omega\rangle^{\frac12+\epsilon}$ is not enough to relax the specific singularity $|\xi|^{-2}$ in the proof of Theorem \ref{gwp-hartree2}. We need to seek another way to exploit an additional angular regularity. We introduce one approach given by \cite{sterbenz2} so called an \textit{angular concentration phenomena}, which does not use the dispersion of solutions.  
\begin{lem}[Lemma 5.2 of \cite{sterbenz2}]\label{ang-con}
Let $2\le p<\infty$, and $0\le \sigma<\frac{n-1}p$. If $\lambda\in2^\mathbb Z$, $N\ge1$, $0<{\alpha}\lesssim1$, and $\kappa\in\mathcal C_{\alpha}$, then we have
\begin{align}\label{ang-con1}
\|R_\kappa P_\lambda H_N f\|_{L^p_x(\mathbb R^n)} \lesssim ({\alpha} N)^\sigma \|P_\lambda H_N f\|_{L^p_x(\mathbb R^n)}.
\end{align}
\end{lem}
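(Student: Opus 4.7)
I would prove the bound by interpolating a sharp $L^2$ estimate against the uniform $L^q$-boundedness of the multiplier $R_\kappa P_\lambda$. After scaling to $\lambda=1$, Plancherel in polar coordinates gives
\[
\|R_\kappa P_1 H_N f\|_{L^2_x}^2 = \int_0^\infty \rho^{n-1}\int_{\mathbb{S}^{n-1}} \rho_\kappa(\omega)^2\bigl|\widehat{P_1H_N f}(\rho\omega)\bigr|^2\,d\omega\,d\rho.
\]
By Proposition \ref{sph-fourier}, for each $\rho$ the angular profile $h_\rho(\omega):=\widehat{P_1 H_N f}(\rho\omega)$ has spherical-harmonic expansion supported in degrees $\ell\in[N,2N)$. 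H\"older on the cap (using $|\kappa|\sim \alpha^{n-1}$) combined with the spherical Bernstein inequality $\|h_\rho\|_{L^p(\mathbb{S}^{n-1})}\lesssim N^{(n-1)(1/2-1/p)}\|h_\rho\|_{L^2(\mathbb{S}^{n-1})}$ (Proposition \ref{bernstein-sph} with $q=2$ and any $2\le p<\infty$) yields $\|\chi_\kappa h_\rho\|_{L^2}\lesssim (\alpha N)^{(n-1)(1/2-1/p)}\|h_\rho\|_{L^2}$; integrating in $\rho$ and undoing Plancherel gives
\[
\|R_\kappa P_1 H_N f\|_{L^2_x}\lesssim (\alpha N)^{(n-1)/2-\epsilon}\|P_1H_N f\|_{L^2_x}
\]
for any $\epsilon>0$ (taking $p$ large).

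For the second ingredient, the composite multiplier $R_\kappa P_\lambda$ has symbol $\rho_\kappa(\xi/|\xi|)\rho(|\xi|/\lambda)$ supported in a spherical plate of volume $\sim \lambda^n\alpha^{n-1}$. A standard stationary-phase computation in coordinates aligned with the cap center $\omega_\kappa$ produces the kernel bound
\[
|K_{\kappa,\lambda}(x)|\lesssim \lambda^n\alpha^{n-1}\bigl(1+\lambda|x_\parallel|+\lambda\alpha|x_\perp|\bigr)^{-M}
\]
for every $M\ge 0$, whence $\|K_{\kappa,\lambda}\|_{L^1_x}\lesssim 1$ uniformly in $\kappa,\lambda$. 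Consequently $R_\kappa P_\lambda:L^q_x\to L^q_x$ has norm $\lesssim 1$ for every $1\le q\le\infty$.

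Combining these with the uniform $L^q$-boundedness of $H_N$ for $1<q<\infty$ (Lemma \ref{sph-ortho}), the composite $R_\kappa P_\lambda H_N$ is uniformly $L^q$-bounded and exhibits the $L^2$ gain above. Riesz-Thorin interpolation between the $L^2$ estimate and the $L^q$ bound for $q$ arbitrarily large yields
\[
\|R_\kappa P_\lambda H_N f\|_{L^p_x}\lesssim (\alpha N)^\sigma\|P_\lambda H_N f\|_{L^p_x}
\]
for every $\sigma<(n-1)/p$ and $2\le p<\infty$, which accounts for the strict inequality in the statement (the endpoint would require boundedness at $q=\infty$, which fails for $H_N$). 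The main obstacle will be the kernel estimate: one must integrate by parts transverse to $\omega_\kappa$ carefully to extract the decay factor in $\lambda\alpha|x_\perp|$, which comes from the curvature of the sphere on the cap. Everything else is routine spherical-harmonic analysis and Riesz-Thorin interpolation.
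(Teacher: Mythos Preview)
Your proposal is correct and follows essentially the same route as the paper: obtain the $L^2$ gain by Plancherel in polar coordinates together with H\"older on the cap and the spherical Bernstein/Sobolev inequality, then interpolate against the uniform boundedness of $R_\kappa\tilde P_\lambda$. The only difference is that the paper interpolates directly with the trivial $L^\infty$ bound $\|R_\kappa f\|_{L^\infty}\lesssim\|f\|_{L^\infty}$, whereas you interpolate with $L^q$ for large finite $q$ to sidestep the failure of $H_N$ on $L^\infty$; this is a minor (and arguably more careful) variant, and your remark about curvature in the kernel estimate is unnecessary since the transverse decay already follows from the symbol's derivative bounds on a box of size $\lambda\times(\alpha\lambda)^{n-1}$.
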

\begin{proof}
By orthogonality of spherical harmonics, it is no harm to assume that $f(x) = f_0(|x|)Y_{\ell}(\frac{x}{|x|})$, where $f_0$ is a radial function whose Fourier transform is localised in an annular domain of size $\lambda$ and $Y_\ell$ is a spherical harmonic polynomial of degree $\ell$ and $N\le \ell< 2N$. It suffices to show that for $s<\frac{n-1}{2}$,
$$
\|R_\kappa  f\|_{L^2_x} \lesssim (\alpha N)^{s} \| f\|_{L^2_x}.
$$	
Then the interpolation with the trivial bound $\|R_\kappa f\|_{L^\infty_x}\lesssim \|f\|_{L^\infty_x} $ gives \eqref{ang-con1}. We let $s<\frac{n-1}{2}$. As the proof of \eqref{ang-con1}, we only use the H\"older inequality and the angular Sobolev embedding Proposition \ref{bernstein-sph}
\begin{align*}
\|R_\kappa f	\|_{L^2_x}^2 = \| \rho_\kappa \widehat f\|_{L^2_\xi}^2 &= \int_0^\infty \| \rho_\kappa \widehat f\|_{L^2_\omega(\mathbb S^{n-1})}^2 r^{n-1}\,dr \\
& \lesssim  \int_0^\infty  \|\rho_\kappa\|_{L^{\frac{n-1}{s}}_\omega}^2\|\widehat f\|_{L^{\frac{2(n-1)}{n-1-2s}}_\omega}^2 r^{n-1}\,dr \\
& \lesssim \int_0^\infty  \|\rho_\kappa\|_{L^{\frac{n-1}{s}}_\omega}^2\|\widehat f\|_{L^{2}_\omega}^2 r^{n-1}\,dr \\
& \lesssim (\alpha N)^{2s} \|f\|_{L^2_x}^2.
\end{align*}	
\end{proof}

\section{Bilinear estimates: proof of Theorem}
This section is devoted to the proof of Theorem \ref{gwp-hartree1} and Theorem \ref{gwp-hartree2}. The proof of Theorem \ref{gwp-hartree4} follows by an identical manner as the proof of Theorem \ref{gwp-hartree2}. We first define the Duhamel integral 
$$
\mathfrak I^\theta[F] = \int_0^t e^{-\theta i(t-t')|\nabla|}F(t')\,dt'.
$$
Then the integral $\mathfrak I^\theta[F]$ solves the half-wave equation
$$
(-i\partial_t+\theta|\nabla|)\mathfrak I^\theta[F] = F,
$$
with vanishing data at $t=0$. For the proof of Theorem \ref{gwp-hartree1} and Theorem \ref{gwp-hartree2} it suffices to show the following trilinear estimates: for $\sigma>0$,
\begin{align}
\|\mathfrak I^\theta[\Pi_\theta (V_b*(\psi_1^\dagger\gamma^0\psi_2)\gamma^0\psi_3)]\|_{F^{0,\sigma}_\theta} & \lesssim \|\psi_1\|_{F^{0,\sigma}_{\theta_1}}\|\psi_2\|_{F^{0,\sigma}_{\theta_2}}\|\psi_3\|_{F^{0,\sigma}_{\theta_3}}, \label{main-tri1} \\
\|\mathfrak I^\theta[\Pi_\theta (V_b*(\psi_1^\dagger\psi_2)\psi_3)]\|_{F^{0,1}_\theta} & \lesssim \|\psi_1\|_{F^{0,1}_{\theta_1}}\|\psi_2\|_{F^{0,1}_{\theta_2}}\|\psi_3\|_{F^{0,1}_{\theta_3}}. \label{main-tri2}
\end{align}
Indeed, multilinear estimates \eqref{main-tri1} and \eqref{main-tri2} together with the standard contraction argument give the global solutions to the equations \eqref{main-hartree} for $\Gamma=I_{4\times4}$ and $\Gamma=\gamma^0$, respectively, when we have the appropriate smallness condition for the initial data $\psi_0$. Moreover, the finiteness of the $V^2_\theta$-norm of the solutions implies the scattering property by an application of Lemma \ref{v-scatter}. In view of the energy inequality Lemma \ref{energy-ineq} the proof of the trilinear estimates \eqref{main-tri1} and \eqref{main-tri2} is reduced to the estimates of the following quad-linear expression:
\begin{align}\label{quart-int1}
\int_{\mathbb R^{1+3}} V_b*(\psi_1^\dagger\gamma^0\psi_2)(\psi_4^\dagger\gamma^0\psi_3)\,dxdt,
\end{align}
and
\begin{align}\label{quart-int2}
\int_{\mathbb R^{1+3}} V_b*(\psi_1^\dagger\psi_2)(\psi_4^\dagger\psi_3)\,dxdt.	
\end{align}
We let $\xi_j$ be the spatial Fourier variables of the spinor field $\psi_j$, $j=1,2,3,4$. In view of the Plancherel's theorem, we have the frequency-relations
$$
-\xi_1+\xi_2-\xi_4+\xi_3=0,
$$ 
or 
$$
\xi_0 = -\xi_1+\xi_2 = -\xi_4+\xi_3.
$$
For a moment we assume that the Fourier transforms of the spinor fields $\psi_j$ are localised in annuli of size $\lambda_j$, respectively. Then the quad-linear expression vanishes unless the following frequency-relations hold
\begin{align*}
\min\{\lambda_0,\lambda_j,\lambda_k\} & \lesssim \textrm{med}\{\lambda_0,\lambda_j,\lambda_k\} \approx \max\{\lambda_0,\lambda_j,\lambda_k\},
\end{align*}
where $\{j,k\}=\{1,2\}$, or $\{j,k\}=\{3,4\}$. We have the similar relations for the angular frequencies after the use of the spherical Littlewood-Paley projections $H_{N_j}$
\begin{align*}
\min\{N_0,N_j,N_k\}	&\lesssim \textrm{med}\{N_0,N_j,N_k\}	 \approx \max\{N_0,N_j,N_k\}	. 
\end{align*}
We further decompose the integrand via the modulation $d$, the distance to the characteristic hypersurface (or the light cone). To do this we introduce the temporal Fourier variables $\tau_j$ of the spinors $\psi_j$. The modulation of the spinor $\psi_j$ is given by $|\tau_j+\theta_j|\xi_j||$. We recall the modulation functions 
$$
\mathcal M_{\theta_{1234}}(\xi_1,\xi_2,\xi_3,\xi_4)=| -\theta_1|\xi_1|+\theta_2|\xi_2|+\theta_3|\xi_3|-\theta_4|\xi_4| |	.
$$
We pay special attention to the resonant interactions, which means that $\mathcal M_{\theta_{1234}}$ is relatively small. The modulation function is small only when\footnote{Here we ignore the high-output cases such as $\lambda_1\ll\lambda_2$ or $\lambda_3\ll\lambda_4$. As the readers will see below Lemma \ref{main-bi}, the high-output cases can be easily treated.}
\begin{enumerate}
	\item $\theta_1=\theta_2$ and $\theta_3=\theta_4$ and $\lambda_1\approx\lambda_2$ and $\lambda_3\approx\lambda_4$, \label{res-int1}
	\item  $\lambda_1\approx\lambda_2\approx\lambda_3\approx\lambda_4$ with $(\theta_1,\theta_2,\theta_3,\theta_4)=(+,-,-,+)$ or $(-,+,+,-)$. \label{res-int2}
\end{enumerate}
In the case \eqref{res-int2}, we cannot use the null structure and we get the bound as (see also Proposition 3.7 of \cite{cyang}) $$\|P_\mu(\varphi_{\lambda_1}^\dagger\gamma^0\psi_{\lambda_2})\|_{L^2_{t,x}}\lesssim\mu\|\varphi_{\lambda_1}\|_{V^2_{\theta_1}}\|\psi_{\lambda_2}\|_{V^2_{\theta_2}}.$$
The resonant interaction \eqref{res-int2} is the main drawback, which hinders one from obtaining the global solutions for the $L^2_x$-data. We overcome such an obstruction by applying extra weighted regularity in the angular variables. In what follows, we will prove for some $\mathfrak d>0$,
\begin{align*}
\|P_\mu H_N(\varphi_{\lambda_1,N_1}^\dagger\gamma^0\psi_{\lambda_2,N_2})\|_{L^2_tL^2_x} & \lesssim \mu\left(\frac\mu{\min\{\lambda_1,\lambda_2\}}\right)^{\mathfrak d}	\min\{N_1,N_2\}\|\varphi_{\lambda_1,N_1}\|_{V^2_{\theta_1}}\|\psi_{\lambda_2,N_2}\|_{V^2_{\theta_2}},	
\end{align*}
   and hence we simply combine two bound to get
   \begin{align*}
\|P_\mu H_N(\varphi_{\lambda_1,N_1}^\dagger\gamma^0\psi_{\lambda_2,N_2})\|_{L^2_tL^2_x} & \lesssim \mu\left(\frac\mu{\min\{\lambda_1,\lambda_2\}}\right)^{\frac\delta{8}}	(\min\{N_1,N_2\})^\delta\|\varphi_{\lambda_1,N_1}\|_{V^2_{\theta_1}}\|\psi_{\lambda_2,N_2}\|_{V^2_{\theta_2}},	
\end{align*}
for an arbitrarily small $\delta\ll1$. Since we have $\lambda_1\approx\lambda_2\approx\lambda_3\approx\lambda_4$ it is easy to see that the above bound is enough to prove Theorem \ref{gwp-hartree1} in the case \eqref{res-int2}. 

From now on we deal with the integrals \eqref{quart-int1} and \eqref{quart-int2} in non-resonant interactions and resonant interactions other than \eqref{res-int2}. We need to consider all possible cases depending on the relative sizes of the frequency and the modulation: $d\ll \max\{\lambda_0,\lambda_j,\lambda_k\}$ and $d\gtrsim \max\{\lambda_0,\lambda_j,\lambda_k\}$. For the latter case, which is relatively high-modulation-regime, the task is rather easy. Indeed, the use of the H\"older inequality and the bound for a high-modulation-regime \eqref{bdd-high-mod} gives the required bound to prove Theorem \ref{gwp-hartree1} in the high-modulation cases. We refer to \cite{chohlee} and omit the details. On the other hand, for the relatively low-modulation-regime, i.e., $d\ll \max\{\lambda_0,\lambda_j,\lambda_k\}$, we prove the following frequency-localised $L^2$-bilinear estimates.
\begin{lem}\label{main-bi}
Let $\epsilon>0$ be arbitrarily small number. There exists $\mathfrak d>0$ such that
\begin{align}\label{main-est1}
\begin{aligned}
	& \|P_{\lambda_0}H_{N_0}(\varphi_{\lambda_1,N_1}^\dagger	\gamma^0\psi_{\lambda_2,N_2})\|_{L^2_tL^2_x} \\
	& \qquad \lesssim \lambda_0 \left(\frac{\min\{\lambda_0,\lambda_1,\lambda_2\}}{\max\{\lambda_0,\lambda_1,\lambda_2\}}\right)^\mathfrak d (\min\{N_1,N_2\})^\epsilon \|\varphi_{\lambda_1,N_1}\|_{V^2_{\theta_1}}\|\psi_{\lambda_2,N_2}\|_{V^2_{\theta_2}},
\end{aligned}	
\end{align}
and
\begin{align}\label{main-est2}
\begin{aligned}
	& \|P_{\lambda_0}H_{N_0}(\varphi_{\lambda_1,N_1}^\dagger	\psi_{\lambda_2,N_2})\|_{L^2_tL^2_x} \\
	& \qquad \lesssim \lambda_0 \left(\frac{\min\{\lambda_0,\lambda_1,\lambda_2\}}{\max\{\lambda_0,\lambda_1,\lambda_2\}}\right)^\mathfrak d (\min\{N_1,N_2\})^{1-\epsilon} \|\varphi_{\lambda_1,N_1}\|_{V^2_{\theta_1}}\|\psi_{\lambda_2,N_2}\|_{V^2_{\theta_2}}.
\end{aligned}	
\end{align}
\end{lem}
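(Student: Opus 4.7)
The proof will proceed by a case split on the ordering of the three frequencies $(\lambda_0, \lambda_1, \lambda_2)$. When one of the two input frequencies is much smaller than the other two, i.e.\ the Low $\times$ High $\Rightarrow$ High regime, the argument is standard: I would place the high-frequency factor in a dispersive $L^q_t L^r_x$ Strichartz space via Lemma \ref{stri-cube} and the low-frequency factor in $L^\infty_t L^2_x$, while the required angular factor $(\min\{N_1, N_2\})^\epsilon$ or $(\min\{N_1, N_2\})^{1-\epsilon}$ is produced from Proposition \ref{bernstein-sph}. The core of the proof therefore concerns the High $\times$ High $\Rightarrow$ Low regime $\mu := \lambda_0 \ll \lambda := \lambda_1 \approx \lambda_2$.

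In this regime I would decompose each input spinor via cubes $\mathtt q_j \in \mathcal Q_\mu$ and caps $\kappa_j \in \mathcal C_\alpha$ at some angular scale $\alpha$ to be optimised. The frequency relation $\xi_2 - \xi_1 = \xi_0$ with $|\xi_0| \approx \mu$ and $|\xi_1|, |\xi_2| \approx \lambda$ forces $\angle(\xi_1, \xi_2) \lesssim \mu/\lambda$, so only a diagonal sub-collection of cube-pairs and cap-pairs contributes to the output. A H\"older split $L^2_tL^2_x \hookleftarrow L^4_tL^4_x \cdot L^4_tL^4_x$ combined with the cube-refined Strichartz bound of Lemma \ref{stri-cube} and the square-sum transfer of Lemma \ref{square-sum} then supplies each factor with the dispersive bound $(\mu\lambda)^{1/4}$ up to an $\alpha^{-\epsilon}$ loss coming from the cap counting.

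For the null-form estimate \eqref{main-est1}, I would insert the decomposition \eqref{dec-di} at $x = \lambda \omega_{\kappa_1}$, $y = \lambda \omega_{\kappa_2}$. The two error terms pick up an extra factor $\alpha$ from Lemma \ref{lem-null}, while the main term inherits the symbol bound $|\Pi_{\theta_1}(\lambda \omega_{\kappa_1}) \gamma^0 \Pi_{\theta_2}(\lambda \omega_{\kappa_2})| \lesssim \angle(\theta_1 \omega_{\kappa_1}, \theta_2 \omega_{\kappa_2})$ from \eqref{dirac-null}. In the signature-resonant case $\theta_1 = \theta_2$ this angle is $O(\alpha)$; in the non-resonant case $\theta_1 \neq \theta_2$ the modulation function is forced to be $\approx \lambda$, so the corresponding sub-case can be absorbed into the high-modulation regime treated by \eqref{bdd-high-mod}. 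Optimising $\alpha \sim (\mu/\lambda)^{1/2}$ then yields a positive frequency exponent $\mathfrak d > 0$, and the weak angular factor $(\min\{N_1, N_2\})^\epsilon$ follows from Proposition \ref{bernstein-sph} since no angular regularity is spent on the bilinear bound itself.

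For the non-null estimate \eqref{main-est2} the symbol offers no gain, and the cap count has to be compensated entirely by angular regularity. The key tool is then the angular concentration inequality of Lemma \ref{ang-con}, which trades a cap-localisation of size $\alpha$ for a multiplicative factor $(\alpha N)^\sigma$ with $\sigma$ arbitrarily close to (but strictly below) $1$. Inserting this on one of the two factors and combining it with the cube/cap Strichartz step described above will produce the advertised $(\min\{N_1, N_2\})^{1-\epsilon}$ loss while leaving a positive frequency exponent $\mathfrak d > 0$. I expect the principal obstacle to lie precisely in this parameter tuning: in the absence of a null structure the concentration gain $(\alpha N)^\sigma$ only marginally beats the cap count, so the scales $\alpha$, $\sigma$ and the loss exponent $\epsilon$ must be balanced simultaneously to keep $\mathfrak d$ strictly positive while still allowing the desired angular loss. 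Once these parameters are fixed, a final application of Lemma \ref{square-sum} to reinstate the $V^2_{\theta_j}$ norms after square-summing over cubes and caps completes the argument.
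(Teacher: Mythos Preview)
Your overall architecture for the High$\times$High$\Rightarrow$Low regime is on the right track, but there is a genuine gap in your treatment of \eqref{main-est2}, and a related difficulty in \eqref{main-est1} that you gloss over.

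For \eqref{main-est2} you propose to compensate the absent null structure entirely through Lemma~\ref{ang-con}, claiming that $\sigma$ can be taken arbitrarily close to $1$. But the hypothesis of Lemma~\ref{ang-con} is $\sigma<\frac{n-1}{p}=\frac{2}{p}$ (here $n=3$), so in the $L^4_{t,x}\times L^4_{t,x}$ H\"older split you describe one can only take $\sigma<\tfrac12$. With cap scale $\alpha=\mu/\lambda$ the resulting bound is then of order $\mu(\mu/\lambda)^{\sigma-\frac12-O(\epsilon)}N^{\sigma}$, so the exponent on $\mu/\lambda$ is \emph{negative} and no $\mathfrak d>0$ survives. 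The paper closes this gap by using, \emph{in addition} to Lemma~\ref{ang-con}, the angularly improved Strichartz estimate of Proposition~\ref{stri-ang}: after a Bernstein step at the output and an asymmetric H\"older split $L^{q}_tL^4_x\times L^{\frac{2q}{q-2}}_tL^{q}_x$ with $q$ slightly larger than $2$, the first factor is controlled by $(\alpha N_1)^{\frac12-2\eta}\cdot \lambda^{\frac34-\frac1q}N_1^{\frac12+\eta}$, and the two $N_1$-powers combine to $N_1^{1-\eta}$ while leaving $\mathfrak d\approx\tfrac14$. Without Proposition~\ref{stri-ang} your parameter tuning cannot simultaneously achieve $N^{1-\epsilon}$ and $\mathfrak d>0$.

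For \eqref{main-est1} there are two smaller issues. First, your optimised scale $\alpha\sim(\mu/\lambda)^{1/2}$ is too large: with null-form gain $\alpha$ and cube-Strichartz factor $(\mu\lambda)^{1/2}$ this yields only $\mu(\mu/\lambda)^{-O(\epsilon)}$. One should take $\alpha\sim\mu/\lambda$; the paper instead performs a modulation decomposition and uses the $d$-dependent choice $\alpha=(d\mu/\lambda^2)^{1/2}$, summed over $d\ll\lambda$. Second, your reduction of the case $\theta_1\neq\theta_2$ to the high-modulation regime is only valid at the level of the \emph{quartilinear} integral \eqref{quart-int1}: for the standalone bilinear $L^2_{t,x}$ estimate both inputs may have modulation $\ll\lambda$ while the \emph{output} carries the large temporal frequency, and \eqref{bdd-high-mod} gives no gain on an $L^2_{t,x}$ norm in that situation. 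The paper handles this by deriving the sign-independent angular bound \eqref{bi-est-ang} (again via Lemma~\ref{ang-con} \emph{together with} Proposition~\ref{stri-ang}) and interpolating it with the null-form bound \eqref{bi-est-null}, respectively with the $\theta_1\neq\theta_2$ bound \eqref{hhl-trans}, to produce the factor $(\mu/\lambda)^{\mathfrak d}(\min\{N_1,N_2\})^\epsilon$. Your remark that the $N^\epsilon$ factor ``follows from Proposition~\ref{bernstein-sph}'' misses this interpolation mechanism.
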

In what follows, we only consider the High$\times$High $\Rightarrow$ Low interactions, i.e., $\lambda_0\ll \lambda_1\approx\lambda_2$. Indeed, for the case $\lambda_1\lesssim \lambda_0\approx\lambda_2$ and $\lambda_2\lesssim\lambda_0\approx\lambda_1$, a simple use of the H\"older inequality and the $L^4_{t,x}$-Strichartz estimates gives the desired bound \eqref{main-est1} and \eqref{main-est2} without exploiting an additional angular regularity. In consequence the main concern is the case $\lambda_0\ll \lambda_1\approx\lambda_2$. We first decompose the modulation as follows: 
\begin{align*}
P_{\lambda_0}H_{N_0}(\varphi_{\lambda_1,N_1}^\dagger	\gamma^0\psi_{\lambda_2,N_2}) & = \sum_{d\in 2^{\mathbb Z}} C^\theta_{d}P_{\lambda_0}H_{N_0}[(C^{\theta_1}_{\le d}\varphi_{\lambda_1,N_1})^\dagger	\gamma^0(C^{\theta_2}_{\le d}\psi_{\lambda_2,N_2})] \\
&\qquad +C^\theta_{\le d}P_{\lambda_0}H_{N_0}[(C^{\theta_1}_{ d}\varphi_{\lambda_1,N_1})^\dagger\gamma^0	(C^{\theta_2}_{\le d}\psi_{\lambda_2,N_2})]  \\
&\qquad + C^\theta_{\le d}P_{\lambda_0}H_{N_0}[(C^{\theta_1}_{\le d}\varphi_{\lambda_1,N_1})^\dagger	\gamma^0(C^{\theta_2}_{ d}\psi_{\lambda_2,N_2})] \\
& := \sum_{d\in2^{\mathbb Z}}\mathcal I_0 +\mathcal I_1+\mathcal I_2 .
\end{align*}
 The key is to exploit the null structure in the bilinear form $\varphi^\dagger\gamma^0\psi$ by the decomposition as \eqref{dec-di} and using the bound \eqref{dirac-null} and Lemma \ref{lem-null}. Then the remaining step is to apply the $L^4_{t,x}$-Strichartz estimates Lemma \ref{stri-cube}. 
\subsection{Proof of \eqref{main-est1}} We only deal with the High $\times$ High $\Rightarrow$ Low interactions, i.e., $\lambda_0\ll \lambda_1\approx\lambda_2$. From now on we put $\lambda_1=\lambda_2=\lambda$ and $\lambda_0=\mu$ and assume that $\mu\ll \lambda$. We prove the bilinear estimates \eqref{main-est1} when the modulation is relatively small, i.e., $d\ll\lambda$. Then we must have $\theta_1=\theta_2$ and hence we can exploit the null structure in the bilinear form $\varphi^\dagger\gamma^0\psi$. See also Lemma 8.7 of \cite{candyherr}. We further divide the case $d\ll\lambda$ into two subcases: $d\lesssim \mu$ and $\mu\ll d\ll \lambda$.

 We first consider $d\lesssim \mu$. We let $\alpha = (\frac{d\mu}{\lambda^2})^\frac12$. After the almost orthogonal decompositions by cubes in $\mathcal Q_\mu$ and angular sectors $\kappa\in\mathcal C_\alpha$ we exploit the null structure by using the projection operators $\Pi_\theta$. Then we use the H\"older inequality and then $L^4_{t,x}$-Strichartz estimates as follows.
\begin{align*}
\mathcal I_0 &\lesssim \|C^\theta_{d}P_{\mu}H_{N}[(C^{\theta_1}_{\le d}\varphi_{\lambda,N_1})^\dagger	\gamma^0(C^{\theta_2}_{\le d}\psi_{\lambda,N_2})]\|_{L^2_tL^2_x} \\
 & \lesssim \left(\sum_{\substack{\mathtt q_1,\mathtt q_2\in\mathcal Q_\mu \\ |\mathtt q_1-\mathtt q_2|\le2\mu}}\sum_{\substack{\kappa_1,\kappa_2\in\mathcal C_\alpha \\ |\kappa_1-\kappa_2|\le2\alpha}}\left\| C^{\theta}_d P_\mu H_N[(P_{\mathtt q_1}R_{\kappa_1}C^{\theta_1}_{\le d}\varphi_{\lambda,N_1})^\dagger\gamma^0(P_{\mathtt q_2}R_{\kappa_2}C^{\theta_2}_{\le d}\psi_{\lambda,N_2})]\right\|_{L^2_tL^2_x}^2 \right)^\frac12	 \\
 & \lesssim \alpha  \left(\sum_{\substack{\mathtt q_1,\mathtt q_2\in\mathcal Q_\mu \\ |\mathtt q_1-\mathtt q_2|\le2\mu}}\sum_{\substack{\kappa_1,\kappa_2\in\mathcal C_\alpha \\ |\kappa_1-\kappa_2|\le2\alpha}}\left\| P_{\mathtt q_1}R_{\kappa_1}C^{\theta_1}_{\le d}\varphi_{\lambda,N_1}\right\|_{L^4_tL^4_x}^2\left\|P_{\mathtt q_2}R_{\kappa_2}C^{\theta_2}_{\le d}\psi_{\lambda,N_2}\right\|_{L^4_tL^4_x}^2 \right)^\frac12	\\
 & \lesssim \alpha^{1-\epsilon} (\mu\lambda)^\frac12 \left(\frac\mu\lambda\right)^{-\epsilon} \|\varphi_{\lambda,N_1}\|_{V^2_{\theta_1}}\|\psi_{\lambda,N_2}\|_{V^2_{\theta_2}}.
\end{align*}
Then the summation $d\lesssim\mu$ gives
\begin{align*}
\sum_{d\lesssim \mu}\mathcal I_0 & \lesssim \mu \left(\frac\mu\lambda\right)^{\frac12-2\epsilon}	\|\varphi_{\lambda,N_1}\|_{V^2_{\theta_1}}\|\psi_{\lambda,N_2}\|_{V^2_{\theta_2}}.
\end{align*}
For the case $\mu\ll d\ll \lambda$, we follow the identical manner as the previous case $d\lesssim\mu$. The only difference is that we use the orthogonal decomposition by angular sectors $\kappa\in\mathcal C_{\mu\lambda^{-1}}$, since the angle between the Fourier supports of the spinors $\varphi_{\lambda,N_1}$ and $\psi_{\lambda,N_2}$ is less than $\frac\mu\lambda$. Then we see that
\begin{align*}
\mathcal I_0&\lesssim \|C^\theta_{d}P_{\mu}H_{N}[(C^{\theta_1}_{\le d}\varphi_{\lambda,N_1})^\dagger	\gamma^0(C^{\theta_2}_{\le d}\psi_{\lambda,N_2})]\|_{L^2_tL^2_x} \\
 & \lesssim \left(\sum_{\substack{\mathtt q_1,\mathtt q_2\in\mathcal Q_\mu \\ |\mathtt q_1-\mathtt q_2|\le2\mu}}\sum_{\substack{\kappa_1,\kappa_2\in\mathcal C_{\mu\lambda^{-1}} \\ |\kappa_1-\kappa_2|\le2\mu\lambda^{-1}}}\left\| C^{\theta}_d P_\mu H_N[(P_{\mathtt q_1}R_{\kappa_1}C^{\theta_1}_{\le d}\varphi_{\lambda,N_1})^\dagger\gamma^0(P_{\mathtt q_2}R_{\kappa_2}C^{\theta_2}_{\le d}\psi_{\lambda,N_2})]\right\|_{L^2_tL^2_x}^2 \right)^\frac12	 \\
 & \lesssim \frac\mu\lambda  \left(\sum_{\substack{\mathtt q_1,\mathtt q_2\in\mathcal Q_\mu \\ |\mathtt q_1-\mathtt q_2|\le2\mu}}\sum_{\substack{\kappa_1,\kappa_2\in\mathcal C_{\mu\lambda^{-1}} \\ |\kappa_1-\kappa_2|\le2\mu\lambda^{-1}}}\left\| P_{\mathtt q_1}R_{\kappa_1}C^{\theta_1}_{\le d}\varphi_{\lambda,N_1}\right\|_{L^4_tL^4_x}^2\left\|P_{\mathtt q_2}R_{\kappa_2}C^{\theta_2}_{\le d}\psi_{\lambda,N_2}\right\|_{L^4_tL^4_x}^2 \right)^\frac12	\\
 & \lesssim \left(\frac\mu\lambda\right)^{1-\epsilon} (\mu\lambda)^\frac12 \left(\frac\mu\lambda\right)^{-\epsilon} \|\varphi_{\lambda,N_1}\|_{V^2_{\theta_1}}\|\psi_{\lambda,N_2}\|_{V^2_{\theta_2}}.
\end{align*}
The loss by the summation with respect to the modulation $\mu\ll d\ll\lambda$ is only $\log(\mu/\lambda) \lesssim (\frac\mu\lambda)^\epsilon$ and hence
\begin{align*}
\sum_{\mu\ll d\ll\lambda}\mathcal I_0 & \lesssim \mu\left(\frac\mu\lambda\right)^{\frac12-3\epsilon}\|\varphi_{\lambda,N_1}\|_{V^2_{\theta_1}}\|\psi_{\lambda,N_2}\|_{V^2_{\theta_2}}. 	
\end{align*}
Note that the estimates of $\mathcal I_1$ and $\mathcal I_2$ can be obtained in the exactly same way. Hence we conclude that
\begin{align}\label{bi-est-null}
	\|P_\mu H_N(\varphi_{\lambda,N_1}^\dagger\gamma^0\psi_{\lambda,N_1})\|_{L^2_tL^2_x}  & \lesssim \mu\left(\frac\mu\lambda\right)^{\frac12-\delta}\|\varphi_{\lambda,N_1}\|_{V^2_{\theta_1}}\|\psi_{\lambda,N_2}\|_{V^2_{\theta_2}},
\end{align}
for a small $\delta\ll1$.

Now we exploit the angular regularity.  We denote the two input-frequencies by $\xi_1$ and $\xi_2$, respectively. Then the angle $\angle(\xi_1,\xi_2)$ is less than $\frac\mu\lambda$. This is our first step to exploit an additional angular regularity. We let $\alpha=\mu\lambda^{-1}$. We use the almost orthogonal decompositions by angular sectors of size $\alpha$. We let $q>2$ be slightly bigger than $2$. After an application of the Bernstein inequality and the H\"older inequality, we use the angular concentration estimates \eqref{ang-con1} and improved Strichartz estimates $L^q_tL^4_x$ for $\varphi_{\lambda,N_1}$ and the classical Strichartz estimates for $\psi_{\lambda,N_2}$ as follows 
\begin{align*}
	\|P_\mu H_N(\varphi_{\lambda,N_1}^\dagger\gamma^0\psi_{\lambda,N_1})\|_{L^2_tL^2_x} & \lesssim \left( \sum_{\substack{\kappa_1,\kappa_2\in\mathcal C_{\mu\lambda^{-1} } \\ |\kappa_1-\kappa_2|\le2\mu\lambda^{-1} }} \left\|P_\mu H_N[(R_{\kappa_1}\varphi_{\lambda,N_1})^\dagger\gamma^0(R_{\kappa_2}\psi_{\lambda,N_1})]\right\|_{L^2_tL^2_x}^2 \right)^\frac12 \\
	& \lesssim \mu^{3(\frac1q-\frac14)}\left( \sum_{\substack{\kappa_1,\kappa_2\in\mathcal C_{\mu\lambda^{-1} } \\ |\kappa_1-\kappa_2|\le2\mu\lambda^{-1} }} \left\|P_\mu H_N[(R_{\kappa_1}\varphi_{\lambda,N_1})^\dagger\gamma^0(R_{\kappa_2}\psi_{\lambda,N_1})]\right\|_{L^2_tL^{\frac{4q}{q+4}}_x}^2 \right)^\frac12 \\
	& \lesssim \mu^{3(\frac1q-\frac14)}\sup_{\kappa_1}\|R_{\kappa_1}\varphi_{\lambda,N_1}\|_{L^q_tL^4_x} \bigg( \sum_{\substack{\kappa_1,\kappa_2\in\mathcal C_{\mu\lambda^{-1} } \\ |\kappa_1-\kappa_2|\le2\mu\lambda^{-1} }} \|R_{\kappa_2}\psi_{\lambda,N_2}\|_{L^{\frac{2q}{q-2}}_tL^q_x}^2 \bigg)^\frac12 \\
	& \lesssim \mu^{3(\frac1q-\frac14)} \left( \frac\mu\lambda N_1 \right)^{\frac12-2\eta} \lambda^{\frac34-\frac1q}N_1^{\frac12+\eta}\|\varphi_{\lambda,N_1}\|_{U^q_{\theta_1}}\lambda^{1-\frac2q}\left(\frac\mu\lambda\right)^{-\epsilon}\|\psi_{\lambda,N_2}\|_{V^2_{\theta_2}} \\
	& \lesssim \mu \left(\frac\mu\lambda\right)^{\frac14-\delta}N_1 \|\varphi_{\lambda,N_1}\|_{V^2_{\theta_1}}\|\psi_{\lambda,N_2}\|_{V^2_{\theta_2}}.
\end{align*}
If $N_1\gg N_2$, we simply interchange the role of $\varphi_{\lambda,N_1}$ and $\psi_{\lambda,N_2}$ and obtain
\begin{align}\label{bi-est-ang}
\|P_\mu H_N(\varphi_{\lambda,N_1}^\dagger\gamma^0\psi_{\lambda,N_1})\|_{L^2_tL^2_x} & \lesssim 	\mu \left(\frac\mu\lambda\right)^{\frac14-\delta}\min\{N_1,N_2\} \|\varphi_{\lambda,N_1}\|_{V^2_{\theta_1}}\|\psi_{\lambda,N_2}\|_{V^2_{\theta_2}}.
\end{align}
Note that we do not need to decompose the modulation.
We combine \eqref{bi-est-null} and \eqref{bi-est-ang} to get
\begin{align}
	\|P_\mu H_N(\varphi_{\lambda,N_1}^\dagger\gamma^0\psi_{\lambda,N_1})\|_{L^2_tL^2_x} & \lesssim \mu\left(\frac\mu\lambda\right)^{\frac38}(\min\{N_1,N_2\})^\epsilon \|\varphi_{\lambda,N_1}\|_{V^2_{\theta_1}}\|\psi_{\lambda,N_2}\|_{V^2_{\theta_2}},
\end{align}
where $\epsilon>0$ is arbitrarily small number. This completes the proof of \eqref{main-est1}.
\subsection{Proof of \eqref{main-est2}}
The proof of \eqref{main-est2} follows by the identical way as the proof of \eqref{bi-est-ang}. Indeed, we have
\begin{align*}
	\|P_\mu H_N(\varphi_{\lambda,N_1}^\dagger\psi_{\lambda,N_1})\|_{L^2_tL^2_x} & \lesssim \mu \left(\frac\mu\lambda\right)^{\frac14-\delta}(\min\{N_1,N_2\})^{1-\epsilon} \|\varphi_{\lambda,N_1}\|_{V^2_{\theta_1}}\|\psi_{\lambda,N_2}\|_{V^2_{\theta_2}}.
\end{align*}
This completes the proof of Lemma \ref{main-bi}.
\section{Appendix: refined bilinear estimates}
This section is devoted to a refinement of the bilinear estimates proven in the previous section. Such a refined estimate shall be used to prove large data scattering for the equation \eqref{main-hartree} with a certain condition on the initial datum. The main purpose of the Appendix here is to prove the following:
\begin{lem}\label{main-bi-refi}
Let $\epsilon>0$ be arbitrarily small number. Let $0<\delta<1$. There exists $\mathfrak d>0$ such that
\begin{align}
\begin{aligned}
	&\|P_{\lambda_0}H_{N_0}(\varphi_{\lambda_1,N_1}^\dagger\gamma^0\psi_{\lambda_2,N_2})\|_{L^2_tL^2_x} \\
	& \lesssim \lambda_0 \left(\frac{\min\{\lambda_0,\lambda_1,\lambda_2\}}{\max\{\lambda_0,\lambda_1,\lambda_2\}}\right)^\mathfrak d (\min\{N_1,N_2\})^{\epsilon} (\|\varphi_{\lambda_1,N_1}\|_{V^2_{\theta_1}}\|\psi_{\lambda_2,N_2}\|_{V^2_{\theta_2}})^{1-\delta} \\
	& \qquad\qquad\qquad\times(\lambda_1^{-\frac12}\lambda_2^{-\frac12}\|\varphi_{\lambda_1,N_1}\|_{L^4_tL^4_x}|\psi_{\lambda_2,N_2}\|_{L^4_tL^4_x})^\delta,
\end{aligned}	
\end{align}
	and
	\begin{align}
\begin{aligned}
	&\|P_{\lambda_0}H_{N_0}(\varphi_{\lambda_1,N_1}^\dagger\psi_{\lambda_2,N_2})\|_{L^2_tL^2_x} \\
	& \lesssim \lambda_0 \left(\frac{\min\{\lambda_0,\lambda_1,\lambda_2\}}{\max\{\lambda_0,\lambda_1,\lambda_2\}}\right)^\mathfrak d (\min\{N_1,N_2\})^{\epsilon} (\|\varphi_{\lambda_1,N_1}\|_{V^2_{\theta_1}}\|\psi_{\lambda_2,N_2}\|_{V^2_{\theta_2}})^{1-\delta} \\
	& \qquad\qquad\qquad\times(\lambda_1^{-\frac12}\lambda_2^{-\frac12}\|\varphi_{\lambda_1,N_1}\|_{L^4_tL^4_x}|\psi_{\lambda_2,N_2}\|_{L^4_tL^4_x})^\delta.
\end{aligned}	
\end{align}
\end{lem}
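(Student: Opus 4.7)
The plan is to derive Lemma \ref{main-bi-refi} from Lemma \ref{main-bi} via the pointwise multiplicative interpolation
$$
X = X^{1-\delta}X^\delta \le A^{1-\delta}B^\delta,
$$
valid for any non-negative reals $X,A,B$ with $X\le A$ and $X\le B$. Taking $X$ to be the left-hand side of Lemma \ref{main-bi-refi}, $A$ the right-hand side of Lemma \ref{main-bi}, and $B$ the analogous quantity with each $\|\cdot\|_{V^2_{\theta_j}}$ replaced by $\lambda_j^{-\frac12}\|\cdot\|_{L^4_tL^4_x}$, the refined inequalities of Lemma \ref{main-bi-refi} follow immediately. Since Strichartz gives $\lambda^{-\frac12}\|P_\lambda f\|_{L^4_tL^4_x}\lesssim \|P_\lambda f\|_{V^2_\theta}$, the companion bound $B$ is at least as strong as $A$ and is compatible with scaling.

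The substantive task is to establish the companion bound, which I would obtain by re-running the argument of Section 3 with $L^4_tL^4_x$ norms in place of $V^2_\theta$ norms throughout. The entire scaffolding of the proofs of \eqref{main-est1} and \eqref{main-est2}---modulation decomposition, null-form decomposition \eqref{dec-di}, cube/cap localisation at scales $\mu$ and $\alpha=(d\mu/\lambda^2)^{\frac12}$ or $\alpha=\mu/\lambda$, application of Lemma \ref{lem-null} to gain the factor $\alpha$, and the angular-concentration argument via Lemma \ref{ang-con}---is preserved, because each of those ingredients is stated for general $L^r$. The one change is in the final square-sum step: instead of invoking Lemma \ref{stri-cube} followed by the $V^2_\theta$-valued Lemma \ref{square-sum}, one would invoke the $L^4_tL^4_x$-valued square-function inequality
$$
\Bigl(\sum_{\mathtt q\in\mathcal Q_\mu}\sum_{\kappa\in\mathcal C_\alpha}\|P_{\mathtt q}R_\kappa P_\lambda f\|_{L^4_tL^4_x}^2\Bigr)^{\frac12}\lesssim \Bigl(\frac{\mu}{\lambda}\Bigr)^{-\epsilon}\alpha^{-\epsilon}\|P_\lambda f\|_{L^4_tL^4_x},
$$
which follows from Minkowski's inequality (the Banach-space comparison $\ell^2(L^4)\hookleftarrow L^4(\ell^2)$ valid for exponent $4\ge 2$), the Stein square-function theorem for conical sectors, and the Littlewood--Paley theorem for finitely overlapping cubes of size $\mu$ inside an annulus of size $\lambda$. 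A direct scaling check then shows that each numeric bound in Section 3 (e.g.\ $\alpha^{1-\epsilon}(\mu\lambda)^{\frac12}(\mu/\lambda)^{-\epsilon}$ attached to the $V^2_\theta$-norms) has the companion analogue $\alpha(\mu/\lambda)^{-\epsilon}\alpha^{-\epsilon}$ attached to the $L^4_tL^4_x$-norms, and that summation over the modulation scale $d$ preserves a gain $(\mu/\lambda)^{\mathfrak d_B}$ with $\mathfrak d_B>0$.

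The main technical obstacle is the justification of the $L^4$ square-function estimate uniformly over the cube and cap decompositions. Unlike Lemma \ref{square-sum}, which exploits the atomic structure of $V^2_\theta$ and the embedding $V^2\hookrightarrow U^q$, its $L^4$ analogue relies on Stein-type cylinder-multiplier bounds for angular sectors combined with the classical Littlewood--Paley square function for cube decompositions in an annulus; these are standard but must be applied carefully to respect the finite-overlap structure of $\mathcal Q_\mu$ and $\mathcal C_\alpha$ while producing only the harmless logarithmic losses $(\mu/\lambda)^{-\epsilon}\alpha^{-\epsilon}$ already present in the proof of Lemma \ref{main-bi}. Once this ingredient is in place, the companion bound falls out of the existing bookkeeping of Section 3, and its interpolation with Lemma \ref{main-bi} through $X\le A^{1-\delta}B^\delta$ yields both inequalities of Lemma \ref{main-bi-refi}.
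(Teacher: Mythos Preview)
Your interpolation scheme $X\le A^{1-\delta}B^\delta$ would be fine if the companion bound $B$ existed, but the core ingredient you propose for $B$ is false. The inequality
\[
\Bigl(\sum_{\mathtt q\in\mathcal Q_\mu}\sum_{\kappa\in\mathcal C_\alpha}\|P_{\mathtt q}R_\kappa P_\lambda f\|_{L^4_tL^4_x}^2\Bigr)^{\frac12}\lesssim \Bigl(\frac{\mu}{\lambda}\Bigr)^{-\epsilon}\alpha^{-\epsilon}\|P_\lambda f\|_{L^4_tL^4_x}
\]
fails in general: Minkowski for $p\ge 2$ gives $\|(\sum_j|f_j|^2)^{1/2}\|_{L^4}\le(\sum_j\|f_j\|_{L^4}^2)^{1/2}$, which is exactly the \emph{opposite} direction to what you need (your embedding $\ell^2(L^4)\hookleftarrow L^4(\ell^2)$ is stated backwards). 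The Stein square-function theorem only gives $\|f\|_{L^4}\approx\|(\sum_\kappa|R_\kappa f|^2)^{1/2}\|_{L^4}$, and combining this with the correct Minkowski yields $\|f\|_{L^4}\lesssim(\sum_\kappa\|R_\kappa f\|_{L^4}^2)^{1/2}$, again the wrong way. A secondary obstruction is that the modulation projections $C^{\theta_j}_{\le d}$ are not bounded on $L^4_tL^4_x$ (the conjugation $e^{\mp it|\nabla|}$ does not preserve $L^4_x$), so the modulation decomposition cannot simply be re-run with $L^4$ inputs.

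The paper avoids both problems by never attempting a pure $L^4$ companion bound. Instead it interpolates \emph{inside} the square sum, proving the hybrid estimate
\[
\Bigl(\sum_{\mathtt q,\kappa}\|P_{\mathtt q}R_\kappa\varphi_{\lambda,N}\|_{L^4_{t,x}}^2\Bigr)^{\frac12}\lesssim \alpha^{-2\delta}\Bigl(\frac\mu\lambda\Bigr)^{-2\delta}(\mu\lambda)^{\frac14}\|\varphi_{\lambda,N}\|_{V^2_\theta}^{1-\delta}\bigl(\lambda^{-\frac12}\|\varphi_{\lambda,N}\|_{L^4_{t,x}}\bigr)^{\delta},
\]
which follows by combining the $V^2$-valued square-sum bound (Lemma~\ref{square-sum}) with the trivial pointwise control $\|P_{\mathtt q}R_\kappa\varphi\|_{L^4}\le\|\varphi\|_{L^4}$ and paying a cardinality loss $|\mathcal J|^{O(\delta)}$. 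The modulation decomposition and null-form bookkeeping are then run exactly as in Section~3, with this hybrid square sum in place of Lemma~\ref{square-sum}; the $C^{\theta_j}_{\le d}$ projections stay on the $V^2$ side where they are disposable. For the angular-concentration branch the paper uses the convexity splitting $\|R_\kappa\varphi\|_{L^{q'}_tL^4_x}\le\|R_\kappa\varphi\|_{L^q_tL^4_x}^{1-\delta}\|\varphi\|_{L^4_{t,x}}^\delta$ with $\frac1{q'}=\frac{1-\delta}{q}+\frac\delta4$, again mixing the two norms at the level of a single input rather than at the level of the final bilinear bound.
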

Then Lemma \ref{main-bi-refi} implies that the global-in-time solutions to the equation \eqref{main-hartree} with a large initial data, provided that a particular dispersive norm $\|u\|_{\mathbf D^{-\frac12,\sigma}}$ of the solutions given by
$$
\|u\|_{\mathbf D^{s,\sigma}}^2 = \sum_{N\ge1}N^{2\sigma}\||\nabla|^sH_N u\|_{L^4_tL^4_x}^2
$$
remains bounded as the solutions evolve in time. To avoid the repetitive task, which is already seen in previous works, instead of presenting the explicit statement and its proof, we  refer the readers to \cite{candyherr1,chohlee1} for the proof of conditional large-data scattering. In what follows, we focus on the proof of Lemma \ref{main-bi-refi}. 
We recall the important frequency-cases which result in the resonant interactions:
\begin{enumerate}
	\item $\theta_1=\theta_2$ and $\theta_3=\theta_4$ and $\lambda_1\approx\lambda_2$ and $\lambda_3\approx\lambda_4$, 
	\item  $\lambda_1\approx\lambda_2\approx\lambda_3\approx\lambda_4$ with $(\theta_1,\theta_2,\theta_3,\theta_4)=(+,-,-,+)$ or $(-,+,+,-)$. 
\end{enumerate}
We first consider the case \eqref{res-int2}. We interpolate the bilinear estimates
$$
\|P_\mu H_N(\varphi_{\lambda,N_1}^\dagger\gamma^0\psi_{\lambda,N_2})\|_{L^2_tL^2_x} \lesssim \mu \|\varphi_{\lambda,N_1}\|_{V^2_{\theta_1}}\|\psi_{\lambda,N_2}\|_{V^2_{\theta_2}}
$$
and the trivial bound
$$
\|P_\mu H_N(\varphi_{\lambda,N_1}^\dagger\gamma^0\psi_{\lambda,N_2})\|_{L^2_tL^2_x} \lesssim \lambda (\lambda^{-1} \|\varphi_{\lambda,N_1}\|_{L^4_tL^4_x}\|\psi_{\lambda,N_2}\|_{L^4_tL^4_x} )
$$
to get
\begin{align}\label{bi-est-res2c}
\begin{aligned}
	&\|P_\mu H_N(\varphi_{\lambda,N_1}^\dagger\gamma^0\psi_{\lambda,N_2})\|_{L^2_tL^2_x} \\
	&\lesssim \mu\left(\frac\mu\lambda\right)^{-\delta} (\|\varphi_{\lambda,N_1}\|_{V^2_{\theta_1}}\|\psi_{\lambda,N_2}\|_{V^2_{\theta_2}})^{1-\delta}(\lambda^{-1} \|\varphi_{\lambda,N_1}\|_{L^4_tL^4_x}\|\psi_{\lambda,N_2}\|_{L^4_tL^4_x} )^\delta.
	\end{aligned}
\end{align}
We combine two bounds \eqref{bi-est-res2c} and \eqref{bi-ang-est-ref} as
\begin{align*}
	&\|P_\mu H_N(\varphi_{\lambda,N_1}^\dagger\gamma^0\psi_{\lambda,N_2})\|_{L^2_tL^2_x} \\
	& \lesssim |\eqref{bi-est-res2c}|^{1-8\delta}|\eqref{bi-ang-est-ref}|^{8\delta} \\
	& \lesssim \mu\left(\frac\mu\lambda\right)^\delta (\min\{N_1,N_2\})^{8\delta}(\|\varphi_{\lambda,N_1}\|_{V^2_{\theta_1}}\|\psi_{\lambda,N_2}\|_{V^2_{\theta_2}})^{1-\delta} \\
	& \qquad\qquad\times (\lambda^{-1} \|\varphi_{\lambda,N_1}\|_{L^4_tL^4_x}\|\psi_{\lambda,N_2}\|_{L^4_tL^4_x} )^\delta.
\end{align*}
Note that we can choose $\delta$ arbitrarily small. This gives the proof of Lemma \ref{main-bi-refi} in the resonant case \eqref{res-int2}. Now we exclusively consider the resonant interaction \eqref{res-int1}. The proof is very similar as the previous section. The only difference is to apply the following square-summation estimates. See also \cite{candyherr1}. 
\begin{align}
\left( \sum_{\mathtt q\in\mathcal Q_\mu}\sum_{\kappa\in\mathcal C_\alpha}\|P_{\mathtt q}R_\kappa \varphi_{\lambda,N}\|_{L^4_tL^4_x}^2 \right)^\frac12 & \lesssim \alpha^{-2\delta}\left(\frac\mu\lambda\right)^{-2\delta}(\mu\lambda)^\frac14 \|\varphi_{\lambda,N}\|_{V^2_{\theta}}^{1-\delta}(\lambda^{-\frac12}\|\varphi_{\lambda,N}\|_{L^4_tL^4_x})^\delta.	
\end{align}
As we have done in the previous section, we decompose the bilinear form into the modulation. When the size of the modulation $d$ is relatively higher than the frequency, i.e., $d\gtrsim\lambda$, we directly deal with the quad-linear expression \eqref{quart-int1}. This case is rather easier than other cases. We omit it and refer to \cite{chohlee1} for details. On the other hand, if $d\lesssim\mu$ then the use of orthogonal decompositions into angular sectors and cubes together with the null form bound yields
\begin{align*}
\mathcal I_0 
 & \lesssim \left(\sum_{\substack{\mathtt q_1,\mathtt q_2\in\mathcal Q_\mu \\ |\mathtt q_1-\mathtt q_2|\le2\mu}}\sum_{\substack{\kappa_1,\kappa_2\in\mathcal C_\alpha \\ |\kappa_1-\kappa_2|\le2\alpha}}\left\| C^{\theta}_d P_\mu H_N[(P_{\mathtt q_1}R_{\kappa_1}C^{\theta_1}_{\le d}\varphi_{\lambda,N_1})^\dagger\gamma^0(P_{\mathtt q_2}R_{\kappa_2}C^{\theta_2}_{\le d}\psi_{\lambda,N_2})]\right\|_{L^2_tL^2_x}^2 \right)^\frac12	 \\
 & \lesssim \alpha  \left(\sum_{\substack{\mathtt q_1,\mathtt q_2\in\mathcal Q_\mu \\ |\mathtt q_1-\mathtt q_2|\le2\mu}}\sum_{\substack{\kappa_1,\kappa_2\in\mathcal C_\alpha \\ |\kappa_1-\kappa_2|\le2\alpha}}\left\| P_{\mathtt q_1}R_{\kappa_1}C^{\theta_1}_{\le d}\varphi_{\lambda,N_1}\right\|_{L^4_tL^4_x}^2\left\|P_{\mathtt q_2}R_{\kappa_2}C^{\theta_2}_{\le d}\psi_{\lambda,N_2}\right\|_{L^4_tL^4_x}^2 \right)^\frac12	\\
 & \lesssim \alpha^{1-4\delta} (\mu\lambda)^\frac12 \left(\frac\mu\lambda\right)^{-4\delta} \|\varphi_{\lambda,N_1}\|_{V^2_{\theta_1}}^{1-\delta}(\lambda^{-\frac12}\|\varphi_{\lambda,N_1}\|_{L^4_tL^4_x})^\delta \|\psi_{\lambda,N_1}\|_{V^2_{\theta_1}}^{1-\delta}(\lambda^{-\frac12}\|\psi_{\lambda,N_1}\|_{L^4_tL^4_x})^\delta.
\end{align*}
Then we have
\begin{align*}
\sum_{d\lesssim\mu}\mathcal I_0 & \lesssim \mu\left(\frac\mu\lambda\right)^{\frac12-8\delta}	\|\varphi_{\lambda,N_1}\|_{V^2_{\theta_1}}^{1-\delta}(\lambda^{-\frac12}\|\varphi_{\lambda,N_1}\|_{L^4_tL^4_x})^\delta \|\psi_{\lambda,N_1}\|_{V^2_{\theta_1}}^{1-\delta}(\lambda^{-\frac12}\|\psi_{\lambda,N_1}\|_{L^4_tL^4_x})^\delta.
\end{align*}
If $\mu\ll d\ll\lambda$, we follow the similar approach as the case $d\lesssim\mu$. Indeed, we have
\begin{align*}
\mathcal I_0
 & \lesssim \frac\mu\lambda  \left(\sum_{\substack{\mathtt q_1,\mathtt q_2\in\mathcal Q_\mu \\ |\mathtt q_1-\mathtt q_2|\le2\mu}}\sum_{\substack{\kappa_1,\kappa_2\in\mathcal C_{\mu\lambda^{-1}} \\ |\kappa_1-\kappa_2|\le2\mu\lambda^{-1}}}\left\| P_{\mathtt q_1}R_{\kappa_1}C^{\theta_1}_{\le d}\varphi_{\lambda,N_1}\right\|_{L^4_tL^4_x}^2\left\|P_{\mathtt q_2}R_{\kappa_2}C^{\theta_2}_{\le d}\psi_{\lambda,N_2}\right\|_{L^4_tL^4_x}^2 \right)^\frac12	\\
 & \lesssim \left(\frac\mu\lambda\right)^{1-4\delta} (\mu\lambda)^\frac12 \left(\frac\mu\lambda\right)^{-4\delta} \|\varphi_{\lambda,N_1}\|_{V^2_{\theta_1}}^{1-\delta}(\lambda^{-\frac12}\|\varphi_{\lambda,N_1}\|_{L^4_tL^4_x})^\delta \|\psi_{\lambda,N_1}\|_{V^2_{\theta_1}}^{1-\delta}(\lambda^{-\frac12}\|\psi_{\lambda,N_1}\|_{L^4_tL^4_x})^\delta.
\end{align*}
Then
\begin{align*}
\sum_{\mu\ll d\ll\lambda}\mathcal I_0 	& \lesssim \mu\left(\frac\mu\lambda\right)^{\frac12-9\delta}	\|\varphi_{\lambda,N_1}\|_{V^2_{\theta_1}}^{1-\delta}(\lambda^{-\frac12}\|\varphi_{\lambda,N_1}\|_{L^4_tL^4_x})^\delta \|\psi_{\lambda,N_1}\|_{V^2_{\theta_1}}^{1-\delta}(\lambda^{-\frac12}\|\psi_{\lambda,N_1}\|_{L^4_tL^4_x})^\delta.
\end{align*}
Hence we conclude that
\begin{align*}
\sum_{d\ll \lambda}\mathcal I_0+\mathcal I_1+\mathcal I_2	& \lesssim \mu\left(\frac\mu\lambda\right)^{\frac12-9\delta}	\|\varphi_{\lambda,N_1}\|_{V^2_{\theta_1}}^{1-\delta}(\lambda^{-\frac12}\|\varphi_{\lambda,N_1}\|_{L^4_tL^4_x})^\delta \|\psi_{\lambda,N_1}\|_{V^2_{\theta_1}}^{1-\delta}(\lambda^{-\frac12}\|\psi_{\lambda,N_1}\|_{L^4_tL^4_x})^\delta.
\end{align*}
Now we shall exploit the angular regularity. For a small $\delta\ll1$, we let $ \frac12-\frac1{10}<\frac1{q'}<\frac1q<\frac12$ so that
$
\frac{1}{q'} = \frac{1-\delta}{q}+\frac\delta4$. After an application of orthogonal decompositions of conic sectors of size $\frac\mu\lambda$, we use in order the Bernstein inequality, H\"older inequality, the convexity of the $L^p_t$-spaces, angular conentration estimates and then the Strichartz estimates to get 
\begin{align*}
\|P_\mu H_N(\varphi_{\lambda,N_1}^\dagger\gamma^0\psi_{\lambda,N_1})\|_{L^2_tL^2_x} & \lesssim \left( \sum_{\substack{\kappa_1,\kappa_2\in\mathcal C_{\mu\lambda^{-1} } \\ |\kappa_1-\kappa_2|\le2\mu\lambda^{-1} }} \left\|P_\mu H_N[(R_{\kappa_1}\varphi_{\lambda,N_1})^\dagger\gamma^0(R_{\kappa_2}\psi_{\lambda,N_1})]\right\|_{L^2_tL^2_x}^2 \right)^\frac12 \\
	& \lesssim \mu^{3(\frac1{q'}-\frac14)}\left( \sum_{\substack{\kappa_1,\kappa_2\in\mathcal C_{\mu\lambda^{-1} } \\ |\kappa_1-\kappa_2|\le2\mu\lambda^{-1} }} \left\|P_\mu H_N[(R_{\kappa_1}\varphi_{\lambda,N_1})^\dagger\gamma^0(R_{\kappa_2}\psi_{\lambda,N_1})]\right\|_{L^2_tL^{\frac{4q'}{q'+4}}_x}^2 \right)^\frac12 \\
	& \lesssim \mu^{3(\frac1{q'}-\frac14)}\sup_{\kappa_1}\|R_{\kappa_1}\varphi_{\lambda,N_1}\|_{L^{q'}_tL^4_x} \bigg( \sum_{\substack{\kappa_1,\kappa_2\in\mathcal C_{\mu\lambda^{-1} } \\ |\kappa_1-\kappa_2|\le2\mu\lambda^{-1} }} \|R_{\kappa_2}\psi_{\lambda,N_2}\|_{L^{\frac{2q'}{q'-2}}_tL^{q'}_x}^2 \bigg)^\frac12	\\
	& \lesssim \mu^{3(\frac1{q'}-\frac14)}\left(\sup_{\kappa_1}\|R_{\kappa_1}\varphi_{\lambda,N_1}\|_{L^{q}_tL^4_x}  \right)^{1-\delta} \|\varphi_{\lambda,N_1}\|_{L^4_tL^4_x}^\delta \lambda^{1-\frac{2}{q'}}\|\psi_{\lambda,N_2}\|_{V^2_{\theta_1}} \\
	& \lesssim \mu \left(\frac\mu\lambda\right)^{\frac3{q'}-\frac54-2\delta} N_1\|\varphi_{\lambda,N_1}\|_{V^2_{\theta_1}}^{1-\delta}(\lambda^{-\frac12}\|\varphi_{\lambda,N_1}\|_{L^4_tL^4_x})^\delta \|\psi_{\lambda,N_2}\|_{V^2_{\theta_1}}.
\end{align*}
On the other hand, we also have the following trivial bound using the $L^4_{t,x}$-Strichartz estimates
\begin{align*}
	\|P_\mu H_N(\varphi_{\lambda,N_1}^\dagger\gamma^0\psi_{\lambda,N_1})\|_{L^2_tL^2_x} &  \lesssim \|\varphi_{\lambda,N_1}\|_{L^4_tL^4_x} \|\psi_{\lambda,N_1}\|_{L^4_tL^4_x} \\
	& \lesssim \lambda^\frac12\|\varphi_{\lambda,N_1}\|_{V^2_{\theta_1}}^{1-\delta}\|(\lambda^{-\frac12}\|\varphi_{\lambda,N_1}\|_{L^4_tL^4_x})^\delta \|\psi_{\lambda,N_1}\|_{L^4_tL^4_x} \\
	& =\lambda\|\varphi_{\lambda,N_1}\|_{V^2_{\theta_1}}^{1-\delta}(\lambda^{-\frac12}\|\varphi_{\lambda,N_1}\|_{L^4_tL^4_x})^\delta \lambda^{-\frac12}\|\psi_{\lambda,N_1}\|_{L^4_tL^4_x} .
\end{align*}
By an interpolation of two bounds we finally have
\begin{align*}
	\|P_\mu H_N(\varphi_{\lambda,N_1}^\dagger\gamma^0\psi_{\lambda,N_1})\|_{L^2_tL^2_x} & \lesssim  \mu^{1-\delta}\lambda^\delta \left(\frac\mu\lambda\right)^{\frac18}N_1^{1-\delta}\|\varphi_{\lambda,N_1}\|_{V^2_{\theta_1}}^{1-\delta}(\lambda^{-\frac12}\|\varphi_{\lambda,N_1}\|_{L^4_tL^4_x})^\delta \\
	&\qquad\qquad \times\|\psi_{\lambda,N_1}\|_{V^2_{\theta_1}}^{1-\delta}(\lambda^{-\frac12}\|\psi_{\lambda,N_1}\|_{L^4_tL^4_x})^\delta\\
	& = \mu \left(\frac\mu\lambda\right)^{\frac18-\delta}N_1 \|\varphi_{\lambda,N_1}\|_{V^2_{\theta_1}}^{1-\delta}(\lambda^{-\frac12}\|\varphi_{\lambda,N_1}\|_{L^4_tL^4_x})^\delta \\
	& \qquad\qquad \times\|\psi_{\lambda,N_1}\|_{V^2_{\theta_1}}^{1-\delta}(\lambda^{-\frac12}\|\psi_{\lambda,N_1}\|_{L^4_tL^4_x})^\delta.
\end{align*}
If $N_1\gg N_2$ then we interchange the role of $\varphi$ and $\psi$. Hence we conclude that for some $\mathfrak d>0$
\begin{align}\label{bi-ang-est-ref}
\begin{aligned}
	&\|P_\mu H_N(\varphi_{\lambda,N_1}^\dagger\gamma^0\psi_{\lambda,N_1})\|_{L^2_tL^2_x} \\
	 & \lesssim \mu\left(\frac\mu\lambda\right)^\mathfrak d \min\{N_1,N_2\}(\|\varphi_{\lambda,N_1}\|_{V^2_{\theta_1}}\|\psi_{\lambda,N_2}\|_{V^2_{\theta_2}})^{1-\delta} \\
	 &\qquad\qquad\times(\lambda^{-1} \|\varphi_{\lambda,N_1}\|_{L^4_tL^4_x}\|\psi_{\lambda,N_2}\|_{L^4_tL^4_x} )^\delta.
	 \end{aligned}
\end{align}
Note that we do not use the specific structure of the bilinear form $\varphi^\dagger\gamma^0\psi$ in the proof of \eqref{bi-ang-est-ref}. Hence the proof of the second estimate in Lemma \ref{main-bi-refi} follows in the identical manner. This completes the proof of Lemma \ref{main-bi-refi}.
\section*{Acknowledgements}
I would like to express my gratitude to Cho, Yonggeun, who brings these problems to my attention, and also Lee, Kiyeon for his truly helpful criticisms and discussion.
This work was supported by the National Research Foundation of Korea (NRF) grant funded by the Korea government (MSIT) (NRF-2020R1A2C4002615). 


\begin{thebibliography}{00}

\bibitem{behe} I. Bejenaru and S. Herr, {\it On global well-posedness and scattering for the massive Dirac-Klein-Gordon system}, Journal of European Mathematics Society, \textbf{19} (2017), 2445--2467.

\bibitem{behe1} I. Bejenaru and S. Herr, {\it The cubic Dirac equation: small initial data in $H^1(\mathbb R^3)$}, Communications in Mathematical Physics, \textbf{335}, no. 1, (2015), 43--82.

\bibitem{behe2} I. Bejenaru and S. Herr, {\it The cubic Dirac equation: small initial data in $H^\frac12(\mathbb R^2)$}, Communications in Mathematical Physics, \textbf{343}, (2016), 515--562.

\bibitem{bjor} J. D. Bjorken and S. D. Drell, {\it Relativistic quantum mechanics}, McGraw-Hill, New York, 1964.

\bibitem{boucan} N. Bounaveas and T. Candy, {\it Global well-posedness for the massless cubic Dirac equations}, International Mathematics Research Notices, Vol. 2016, (2016), 6735--6828.

\bibitem{caccifesta} F. Cacciafesta and P.D' Ancona, {\it Endpoint estimates and global existence for the nonlinear Dirac equation with potential}, Journal of Differential Equations, \textbf{254}, no. 5, (2013), 2233--2260.

\bibitem{cacciafesta1} F. Cacciafesta and \'E. S\'er\'e, {\it Local smoothing estimates for the massless Dirac-Coulomb equation in 2 and 3 dimensions}, Journal of Functional Analysis, \textbf{271}, (2016), 2339--2358.

\bibitem{candy} T. Candy, {\it Global existence for an $L^2$ critical nonlinear Dirac equation in one dimension}, Advances in Differential Equations, \textbf{16}, no. 7-8, (2011), 643--666.

 \bibitem{candyherr} T. Candy and S. Herr, {\it Transference of bilinear restriction estimates to quadratic variation norms and the Dirac-Klein-Gordon system}, Analysis and PDE \textbf{ 11} (2018), 1171--1240.

\bibitem{candyherr1} T. Candy and S. Herr, {\it Conditional large initial data scattering results for the Dirac-Klein-Gordon system}, Forum of Mathematics, Sigma, \textbf 6 (2018),  55 pp.

\bibitem{chagla}  J. M. Chadam and R. T. Glassey, {\it On certain global solutions of the Cauchy problem for the (classical) coupled Klein-Gordon-Dirac equations in one and three space dimensions} Archive for Rational Mechanics and Analysis \textbf{54} (1974), 223--237.

\bibitem{cholee} Y. Cho and K. Lee, {\it Small data scattering of Dirac equations with Yukawa type potentials in $L^2_x(\mathbb R^2)$}, Differential Integral equations, \textbf{34} (2021), 425--436.

\bibitem{cho} Y. Cho, S. Kwon, K. Lee, C. Yang, {\it The modified scattering for Dirac equations of scattering-critical nonlinearity}, (preprint), available on arXiv:2208.12040.

\bibitem{chohlee} Y. Cho, S. Hong, and K. Lee, {\it Scattering and non-scattering of the Hartree-type nonlinear Dirac system at critical regularity}, arXiv:2106.01633.

\bibitem{chohlee1} Y. Cho, S. Hong, and K. Lee, {\it Conditional large-data global well-posedness of Dirac equation with Hartree-type nonlinearity}, arXiv:2203.16938.

\bibitem{chooz} Y. Cho, T. Ozawa, {\it On the semirelativistic Hartree-type equation}, SIAM J. Math. Anal., \textbf{38}, (2006), 1060--1074.

\bibitem{choozlee} Y. Cho, K. Lee, and T. Ozawa, {\it Small data scattering of 2d Hartree type Dirac equations}, J. Math. Anal. Appl. \textbf{506} (2022), 125549.

\bibitem{choslee} Y. Cho and S. Lee, {\it Strichartz estimates in spherical coordinates}, Indiana University Mathematics Journal 62, no. 3, (2013): 991--1020.

\bibitem{chohoz} Y. Cho, S. Hong, and T. Ozawa, {\it Charge conjugation approach to scattering for the Hartree type Dirac equations with chirality}, arXiv:2207.12072.

\bibitem{choozxia} Y. Cho, T. Ozawa, and S. Xia, {\it Remarks on some dispersive estimates}, Communications in Pure and Applied Analysis, \textbf{10} (2011), 1121--1128.


\bibitem{D} P. A. M. Dirac {\it The quamtum theory of the electron}, Proceedings of the Royal Society A: Mathematical, Physical and Engineering Sciences, \textbf{117}, no. 778, (1928), 610--624.

\bibitem{escobedo} M. Escobedo, L. Vega, {\it A semilinear Dirac equation in $H^s(\mathbf R^3)$ for $s>1$}, SIAM Journal on Mathematical Analysis, \textbf{28}, no.2, (1997), 338--362.

\bibitem{geosha} V. Georgiev and B. Shakarov, {\it Global large data solutions for 2D Dirac equation with Hartree type interaction},  Int. Math. Res. Not. rnab082, https://doi.org/10.1093/imrn/rnab082, (2021).
    
\bibitem{gioz} J. Ginibre and T. Ozawa, {\it Long range scattering for nonlinear Schr\"odinger and Hartree equations in space dimension $n \ge 3$}, Commun. Math. Phys. \textbf{151} 1993, 619--645.    

\bibitem{haheko} M. Hadac, S. Herr, and H. Koch, {\it Well-posedness and scattering for the KP-II equation in a critical space}, Inst. H. Poincar\'e Anal. Non lin\'eaire, \textbf{26} (2009), 917--941.

\bibitem{herrlenz} S. Herr, E. Lenzmann, {\it The Boson star equation with initial data of low regularity}, Nonlinear Analysis, Vol. 97, (2014): 125--137.

\bibitem{herrtes} S. Herr, A. Tesfahun, {\it Small data scattering for semi-relativistic equations with hartree type nonlinearity}, Journal of Differential Equations, Vol. 259, (2015): 5510--5532.


\bibitem{klaitataru} S. Klainerman, D. Tataru, {\it On the optimal local regularity for Yang-Mills equations in $\mathbb R^{4+1}$}, Journal of American Mathematical Society 12, (1999): 93--116.

\bibitem{kochtavi} H. Koch, D. Tataru, and M. Visan, {\it Dispersive equations and nonlinear waves}, Basel: Birkh\"auser/Springer, 2014.

\bibitem{majorana} E. Majorana, {\it Teoria simmetrica dell'elettrone e del positrone}, Nuovo Cimento, \textbf{14} (1937), 171--184.


\bibitem{machiharaet} S. Machihara, M. Nakamura, K. Nakanishi, and T. Ozawa, {\it Endpoint Strichartz estimates and global solutions for the nonlinear Dirac equation}, Journal of Functional Analysis, \textbf{219}, (2005), 1--20.  


\bibitem{ozya} T. Ozawa and K. Yamauchi, {\it Structure of Dirac matrices and invariants for nonlinear Dirac equations}, Differ. Integral Equ. \textbf{17} (2004), 971--982.

\bibitem{pecher} H. Pecher, {\it Local well-posedness for the nonlinear Dirac equation in two space dimensions}, Communications on Pure and Applied Analysis, \textbf{13}, no. 2, (2014), 673--685.

\bibitem{pusa} F. Pusateri, {\it Modified scattering for the boson star equation}, Commun. Math. Phys. {\bf 332} (2014), 1203--1234.

\bibitem{selbtes} S. Selberg, A. Tesfahun, {\it Low regularity well-posedness for some nonlinear Dirac equations in one space dimension}, Differential Integral Equations, \textbf{23}, no. 3-4, (2010), 265--278.

\bibitem{soler} M. Soler, {\it Classical, stable, nonlinear spinor fields with positive rest energy}, Phys. Rev. D. \textbf{1}, no. 10, (1970), 2766--2769.

\bibitem{steinweiss} E. M. Stein, G. Weiss, {\it Introduction to Fourier analysis on Euclidean spaces}, Princeton Mathematical Series 32, Princeton University Press, (1971).

\bibitem{ster} J. Sterbenz, {\it Angular regularity and Strichartz estimates for the wave equation}, Int. Math. Res. Not. 2005:4 (2005), 187--231.

\bibitem{sterbenz2} J. Sterbenz, {\it Global regularity for general non-linear wave equations I\!I. $(4+1)-$dimensional Yang-Mills equations in the Lorenz gauge }, American Journal of Mathematics, Vol. 129, No. 3, (2007), 611--664.

\bibitem{tes} A. Tesfahun, {\it Long-time behavior of solutions to cubic Dirac equation with Hartree type nonlinearity in $\mathbb R^{1+2}$}, Int. Math. Res. Not. IMRN 2020, no. 19, 6489--6538.

\bibitem{tes1} A. Tesfahun, {\it Small data scattering for cubic Dirac equation with Hartree type nonlinearity in $\mathbb R^{1+3}$}, SIAM Journal of Mathematical Analysis, {\bf 52} (2020), 2969--3003.

\bibitem{thirring} W. Thirring, {\it A soluble relativistic field theory}, Annals of Physics, \textbf{3}, (1958), 91--112.

\bibitem{cyang} C. Yang, {\it Scattering results for Dirac Hartree-type equations with small initial data}, Communication in Pure and Applied Analysis, {\bf 18} (2019), 1711--1734.


\end{thebibliography}
\end{document}